\numberwithin{equation}{section}
\newcommand{\rrvert}{\vert}
\newcommand{\rrVert}{\Vert}
\newcommand{\llvert}{\vert}
\newcommand{\llVert}{\Vert}
\newtheorem{theorem}{Theorem}[section]
\newtheorem{corollary}{Corollary}[section]
\newtheorem{definition}{Definition}[section]
\newtheorem{lemma}{Lemma}[section]
\newtheorem{proposition}{Proposition}[section]
\newtheorem{example}{Example}[section]
\newtheorem{remark}{Remark}[section]
\begin{document}
\begin{frontmatter}

\title{A multivariate Berry--Esseen theorem with explicit constants}
\runtitle{A multivariate Berry--Esseen theorem}
%

\begin{aug}
%
%
%
\author{\fnms{Martin}~\snm{Rai\v{c}}\ead
[label=e1]{martin.raic@fmf.uni-lj.si}\ead
[label=e2,url]{valjhun.fmf.uni-lj.si/\textasciitilde raicm/}}
\dedicated{This paper is dedicated to the memory of Vidmantas Kastytis
Bentkus (1949--2010).}
%
\address{University of Ljubljana,
University of Primorska, and
Institute of Mathematics, Physics and Mechanics,
Slovenia. \printead{e1,e2}}
\end{aug}

\received{\smonth{2} \syear{2018}}
\revised{\smonth{9} \syear{2018}}

%
\begin{abstract}
We provide a Lyapunov type bound in the multivariate central limit
theorem for sums of independent, but not necessarily identically
distributed random vectors. The error in the normal approximation
is estimated for certain classes of sets, which include the class
of measurable convex sets. The error bound is stated with explicit
constants. The result is proved by means of Stein's method. In
addition, we improve the constant in the bound of the Gaussian
perimeter of convex sets.
\end{abstract}

%
\begin{keyword} 
\kwd{Berry--Esseen theorem}
\kwd{explicit constants}
\kwd{Lyapunov bound}
\kwd{multivariate central limit theorem}
\kwd{Stein's method}
\end{keyword}
\end{frontmatter}
%

\section{Introduction and results}
\label{sc:Intr}

Let $ \mathscr I $ be a countable set (either finite or infinite) and
let $ X_i $, $ i \in\mathscr I $, be independent $ \mathbb{R}^d $-valued
random vectors. Assume that $ \mathbb{E}X_i = 0 $ for all $ i $ and that
$ \sum_{i \in\mathscr I} \operatorname{Var}(X_i) = \mathbf{I}_d $.
It is well known
that in this case, the sum $ W := \sum_{i \in\mathscr I} X_i $
exists almost surely and that $ \mathbb{E}W = 0 $ and $ \operatorname
{Var}(W) = \mathbf{I}_d $.

For $ \mu\in\mathbb{R}^d $ and $ \boldsymbol{\Sigma} \in\mathbb
{R}^{d \times d} $, denote by
$ \mathcal{N}(\mu, \boldsymbol{\Sigma}) $ the $ d $-variate normal
distribution with
mean $ \mu$ and covariance matrix $ \boldsymbol{\Sigma} $. For a
measurable set
$ A \subseteq\mathbb{R}^d $, let $ \mathcal{N}(\mu, \boldsymbol{\Sigma})\{ A \} := \mathbb{P}(Z \in A) $,
and for a measurable function $ f \colon\mathbb{R}^d \to\mathbb
{R}$, denote
$ \mathcal{N}(\mu, \boldsymbol{\Sigma})\{ f \} := \mathbb{E} [
f(Z) ] $, where
$ Z \sim\mathcal{N}(\mu, \boldsymbol{\Sigma}) $.

Roughly speaking, the $ d $-variate central limit theorem for this
set-up says that if none of the summands $ X_i $ is ``too large'', the
sum $ W $ approximately follows $ \mathcal{N}(0, \mathbf{I}_d) $. The error
can be measured and estimated in various ways. Here, we focus on
the Lyapunov type bound
\begin{equation}
\label{eq:ErrBd} \sup_{A \in\mathscr A} \bigl\llvert \mathbb{P}(W \in A) -
\mathcal{N}(0, \mathbf{I}_d) \{ A \} \bigr\rrvert \le K \sum
_{i \in\mathscr I} \mathbb{E} \llvert X_i \rrvert
^3 ,
\end{equation}
where $ \mathscr A $ is a suitable class of subsets of $ \mathbb{R}^d
$ and
where $ \llvert x \rrvert $ denotes the Euclidean norm of the vector $ x $.

Fixing a class of sets for all dimensions $ d $, an important question
is the
dependence of the constant $ K $ on the dimension. The latter has drawn
the attention
of many authors and was tackled by different techniques. The class of measurable
convex sets appears as a natural extension of the classical univariate
Berry--Esseen theorem. For this case and for identically distributed
summands,
Nagaev~\cite{Nag} uses Fourier transforms to derive a constant of
order $ d $.
Bentkus~%
\cite{BKoren:E} succeeds to derive a constant of order $ d^{1/2} $ by
the method of composition (Lindeberg--Bergstr\"om method).
Improving
this method and taking advantage of new bounds on Gaussian perimeters
of convex
sets (see below), he obtains $ K = 400 d^{1/4} $ in \cite{BDim}.
In
\cite{BLja:E}, the latter result is extended to not necessarily
identically distributed
summands, but with no explicit constant, just of order $ d^{1/4} $.

In 1970, Stein~\cite{St0} developed a new elegant approach to bound
the error in
the normal approximation. His method was subsequently extended and
refined in many ways.
G\"otze~\cite{Gtz} derives \eqref{eq:ErrBd} with $ K = 157.85 d + 10
$ using Stein's method
combined with induction. Combining with part of Bentkus's argument,
Chen and Fang~\cite{ChFKoren}
succeed to improve this bound to $ 115 d^{1/2} $. However, this is
still of larger order
than Bentkus's result.

There used to be certain doubts about the correctness of G\"otze's
paper \cite{Gtz}.
To present a more readable account of G\"otze's paper, Bhattacharya and
Holmes wrote
an exposition \cite{BhHExp} of the arguments. However, they obtain a
higher order
dependence of the error rate on $ d $, namely $ d^{5/2} $. In
Remark~\ref{rk:BhHExpGain},
we explain where they gain the extra factor of $ d^{3/2} $.

Here, we combine G\"otze's and Bentkus's arguments to derive the
following explicit variant
of Bentkus's result:

\begin{theorem}
\label{th:ConvBd}
For $ X_i $ and $ W $ as above and all measurable convex sets $ A \subseteq\mathbb{R}^d $,
we have
\begin{equation}
\label{eq:ConvBd} \bigl\llvert \mathbb{P}(W \in A) - \mathcal{N}(0,
\mathbf{I}_d) \{ A \} \bigr\rrvert \le \bigl( 42 d^{1/4} + 16
\bigr) \sum_{i \in\mathscr I} \mathbb{E} \llvert X_i
\rrvert ^3 .
\end{equation}
\end{theorem}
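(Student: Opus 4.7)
The plan is to apply Stein's method to a smoothed version of the indicator $\mathbf{1}_A$, following the framework of G\"otze \cite{Gtz} but incorporating Bentkus's sharpened Gaussian perimeter bound (the improvement of which is the paper's second stated contribution). The overall skeleton is a standard smoothing/approximation decomposition: replace $\mathbf{1}_A$ by a smooth function $h_\varepsilon$ that sandwiches it, bound $\mathcal{N}(0,\mathbf{I}_d)\{h_\varepsilon - \mathbf{1}_A\}$ by the Gaussian perimeter at scale $\varepsilon$, and then bound $\mathbb{E}[h_\varepsilon(W)] - \mathcal{N}(0,\mathbf{I}_d)\{h_\varepsilon\}$ via Stein's equation. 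Finally, optimize $\varepsilon$.

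First, I would construct $h_\varepsilon$ by convolving $\mathbf{1}_{A^{\varepsilon/2}}$ with a Gaussian kernel of bandwidth $\varepsilon$, or equivalently use Bentkus's sandwich functions, so that $\mathbf{1}_A \le h_\varepsilon \le \mathbf{1}_{A^{c\varepsilon}}$ for the $c\varepsilon$-enlargement of $A$, with bounded derivatives $\|D^k h_\varepsilon\|_\infty \lesssim \varepsilon^{-k}$. Since $A$ is convex, the improved Gaussian perimeter bound gives $\mathcal{N}(0,\mathbf{I}_d)\{A^{c\varepsilon} \setminus A\} \le C_1 d^{1/4} \varepsilon$; this is where the $d^{1/4}$ factor enters the final rate.

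Next, let $f = f_\varepsilon$ solve the Stein equation $\Delta f(x) - \langle x, \nabla f(x)\rangle = h_\varepsilon(x) - \mathcal{N}(0,\mathbf{I}_d)\{h_\varepsilon\}$ via the usual Mehler semigroup representation $f(x) = -\int_0^\infty (T_t h_\varepsilon(x) - \mathcal{N}\{h_\varepsilon\})\,dt$. Using independence, for each $i$ write $W = W^{(i)} + X_i$ with $W^{(i)}$ independent of $X_i$, and Taylor expand $\nabla f(W)$ around $W^{(i)}$ to third order. The zeroth- and first-order terms cancel by $\mathbb{E} X_i = 0$ and $\sum_i \operatorname{Var}(X_i) = \mathbf{I}_d$, leaving a remainder controlled by $\|D^3 f\|$-type seminorms times $\sum_i \mathbb{E}|X_i|^3$.

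The hard part, and the place where both Bentkus's and G\"otze's arguments are crucial, is bounding the relevant third-order derivative functional of $f$ without picking up extra dimension factors. A naive $\|D^3 f\|_\infty \lesssim \varepsilon^{-2}$ estimate combined with the $d^{1/4}\varepsilon$ perimeter term balances at $d^{1/4}\varepsilon \sim \varepsilon^{-2}\beta_3$, i.e.\ $\varepsilon \sim (d^{-1/4}\beta_3)^{1/3}$, which yields $d^{1/6}\beta_3^{1/3}$ --- far worse than the advertised Lyapunov rate. To recover a true Lyapunov $\beta_3$ bound at rate $d^{1/4}$, I would follow G\"otze by introducing an auxiliary Gaussian smoothing inside the Stein semigroup (so that $D^3 f$ is evaluated against a Gaussian, giving dimension-free averaged bounds), and then use Bentkus's induction/recursion: apply the theorem itself inductively in $d$ to control how $W$ concentrates near boundaries of convex sets, replacing the worst-case $\varepsilon^{-2}$ factor by a concentration-type quantity of order $d^{1/4}$. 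This induction is where the constant must be tracked carefully, and is the main technical obstacle; Remark~\ref{rk:BhHExpGain} suggests that a clean version of this step is what saves the $d^{3/2}$ factor lost in \cite{BhHExp}. Once this is in place, a final optimization of $\varepsilon$ together with the explicit perimeter constant yields the bound $42 d^{1/4}+16$.
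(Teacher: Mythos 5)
Your outline does track the paper's actual route: in the paper, Theorem~\ref{th:ConvBd} is a two-line corollary of the general bound of Theorem~\ref{th:ClassBd} (applied to the class of convex sets with $\rho_A=\delta_A$, $\kappa=1$, as checked in Example~\ref{ex:ConvOK}) together with the perimeter bound of Theorem~\ref{th:PerimBd}, and the proof of Theorem~\ref{th:ClassBd} is exactly the G\"otze--Bentkus combination you describe: sandwich smoothing at scale $\varepsilon$, the Gaussian-perimeter control of the smoothing error, a Slepian-type interpolation (equivalent to your Mehler semigroup up to reparametrization), a leave-one-out Taylor expansion, the dimension-free averaged third-derivative bound obtained from spherical symmetry of the Gaussian (Lemma~\ref{lm:IntDerphidBd}, which is precisely the point of Remark~\ref{rk:BhHExpGain}), and a self-referential bootstrap.

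As a proof of the stated theorem, however, there is a genuine gap: the content of \eqref{eq:ConvBd} is the explicit constant, and you defer exactly the steps that produce it. First, the recursion is not an ``induction in $d$.'' The paper fixes the dimension and defines the supremum constant $K(\beta_0)$ in \eqref{eq:ClassBdAffKeta} over \emph{all} sums $W$ and all sets in the class (a priori finite because $K(\beta_0)\le 1/\beta_0$), applies it conditionally to the standardized leave-one-out sum $\boldsymbol{\Sigma}_i^{-1/2}W_i$ and the transformed set $\boldsymbol{\Sigma}_i^{-1/2}(A-\theta X_i)$ --- this is where closure of convex sets under such linear maps is used, and it is what yields the better coefficient $50$ in \eqref{eq:ClassBdAff} --- and closes by solving $K\le\max\{1/\beta_*,\,K/2+\cdots\}$. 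The quantitative heart is Lemma~\ref{lm:OUDerNormApprox}: the interpolation integral is split at an angle $\beta$, for small angles $M_2(f)\le 4(1+\kappa)\varepsilon^{-2}$ is multiplied by a shell probability of size $\gamma^*\varepsilon/\sigma+2D$ (normal part via Lemma~\ref{lm:PerimNonStd}, plus the bootstrap error $D$), for large angles the dimension-free Gaussian bound plus a $c_3D\cot^3\alpha$ term is used, and $\beta$ and then $\varepsilon\asymp\sum_i\mathbb{E}|X_i|^3$ are optimized. This is precisely the constant-tracking you label ``the main technical obstacle'' and do not carry out, so the bound $42d^{1/4}+16$ is not actually derived. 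Second, the coefficient $42$ also needs the sharpened perimeter estimate $\gamma_d\le 0.59\,d^{1/4}+0.21$ of Theorem~\ref{th:PerimBd} (proved by Nazarov's argument, not Bentkus's; Ball/Bentkus only give $\gamma_d\le 4d^{1/4}$, which in this scheme would produce a constant of roughly $283\,d^{1/4}$), and your proposal assumes rather than establishes it. So the plan is sound and matches the paper's architecture, but without executing the bootstrap with explicit constants and without the improved perimeter bound it does not prove the statement as given.
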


This result follows immediately from Theorems~\ref{th:PerimBd} and
\ref{th:ClassBd} below,
also noticing the observations in Example~\ref{ex:ConvOK}.

To derive $ K $ in \eqref{eq:ErrBd}, it seems inevitable to include
\emph{Gaussian perimeters} of sets $ A \in\mathscr A $ or quantities closely
related to them. The Gaussian perimeter of a set $ A \subseteq\mathbb
{R}^d $ is defined
as
\[
\gamma(A) :=
\int_{\partial A} \phi_d(z) \mathscr H^{d-1}(
\mathrm {d}z) ,
\]
where $ \partial A $ denotes the topological boundary of $ A $, $
\mathscr H^{d-1} $ denotes
the $ (d-1) $-dimensional Hausdorff measure and $ \phi_d(z) := (2 \pi
)^{-d/2} \exp(- \llvert x \rrvert ^2/2) $
denotes the standard $ d $-variate Gaussian density.

Gaussian perimeters are closely related to Gaussian measures of
neighborhoods of the boundary.
Before stating it precisely, we introduce some notation:
\begin{itemize}
\item For a point $ x \in\mathbb{R}^d $ and a non-empty set $ A
\subseteq\mathbb{R}^d $, denote by $ \operatorname{dist}(x, A) $
the Euclidean distance from $ x $ to $ A $.
\item For a set $ A \subseteq\mathbb{R}^d $, which is neither the
empty set nor the whole $ \mathbb{R}^d $, define
the \emph{signed distance function} of $ A $ as
\[
\delta_A(x) := %
\begin{cases} -
\operatorname{dist}\bigl(x, \mathbb{R}^d \setminus A\bigr) ;& x
\in A,
\\
\operatorname{dist}(x, A) ;& x \notin A . \end{cases} %
\]
Moreover, for each $ t \in\mathbb{R}$, define $ A^t := \{ x \in
\mathbb{R}^d ;\delta_A(x) \le t \} $.
In addition, define $ \varnothing ^t := \varnothing $ and $ (\mathbb
{R}^d)^t := \mathbb{R}^d $.
\item For $ A \subseteq\mathbb{R}^d $, define
\[
\gamma^*(A) := \sup \biggl\{ \frac{1}{\varepsilon} \mathcal{N}(0,
\mathbf{I}_d)\bigl\{ A^\varepsilon \setminus A \bigr\},
\frac{1}{\varepsilon} \mathcal{N}(0, \mathbf{I}_d)\bigl\{ A \setminus
A^{- \varepsilon} \bigr\} ; \varepsilon> 0 \biggr\} .
\]
\item For a class of sets $ \mathscr A $, define
$ \gamma(\mathscr A) := \sup_{A \in\mathscr A} \gamma(A) $
and
$ \gamma^*(\mathscr A) := \sup_{A \in\mathscr A} \gamma^*(A) $.
\end{itemize}

The following proposition is believed by some authors to be evident.
However, though the
proof is quite straightforward, the assertion is not immediate. As a
special case of
Proposition~\ref{pr:PerimAnn}, it is proved in Section~\ref{sc:Perim}.

\begin{proposition}
\label{pr:PerimAnnNorm}
Let $ \mathscr A $ be a class of certain convex sets. Suppose that $
A^t \in\mathscr A \cup\{ \varnothing \} $
for all $ A \in\mathscr A $ and all $ t \in\mathbb{R}$. Then we have
$ \gamma(\mathscr A) = \gamma^*(\mathscr A) $.
\end{proposition}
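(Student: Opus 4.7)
The plan is to pass through the coarea formula for the $1$-Lipschitz signed distance $\delta_A$ and thereby relate the Gaussian mass of a parallel annulus to an integral of the Gaussian perimeters of the level sets $A^t$. Concretely, for a convex $A$ (neither empty nor all of $\mathbb{R}^d$), the function $\delta_A$ is $1$-Lipschitz with $|\nabla\delta_A| = 1$ almost everywhere on $\mathbb{R}^d \setminus \partial A$, and for each $t \in \mathbb{R}$ with $A^t \ne \varnothing$ the boundary $\partial A^t$ coincides with the level set $\{\delta_A = t\}$. The coarea formula applied with weight $\phi_d$ then yields the identity
\[
\mathcal{N}(0,\mathbf{I}_d)\bigl\{A^\varepsilon \setminus A\bigr\} = \int_0^\varepsilon \gamma(A^t)\,\mathrm{d}t, \qquad \mathcal{N}(0,\mathbf{I}_d)\bigl\{A \setminus A^{-\varepsilon}\bigr\} = \int_{-\varepsilon}^0 \gamma(A^t)\,\mathrm{d}t,
\]
with the convention $\gamma(\varnothing) = 0$.

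With this identity in hand, the inequality $\gamma^*(\mathscr{A}) \le \gamma(\mathscr{A})$ is immediate. Indeed, by hypothesis every $A^t$ lies in $\mathscr{A} \cup \{\varnothing\}$, so $\gamma(A^t) \le \gamma(\mathscr{A})$ for every $t$; hence the averaged quantities $\varepsilon^{-1} \int \gamma(A^t)\,\mathrm{d}t$ are bounded by $\gamma(\mathscr{A})$ uniformly in $\varepsilon > 0$, and the same holds after taking the supremum over $\varepsilon$ and over $A \in \mathscr{A}$.

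For the reverse inequality $\gamma(\mathscr{A}) \le \gamma^*(\mathscr{A})$, I would fix $A \in \mathscr{A}$ and establish $\gamma(A) \le \gamma^*(A)$ by letting $\varepsilon \downarrow 0$ in the first displayed identity. This reduces to proving the one-sided continuity $\gamma(A^t) \to \gamma(A)$ as $t \downarrow 0$, or at least a Fatou-type lower bound. Since $A$ is convex, the outer parallel sets $A^t$ are convex and converge to $A$ in Hausdorff metric; I would invoke the fact that the Gaussian surface measure is continuous along such a monotone approximation of convex bodies, giving $\gamma(A) \le \liminf_{\varepsilon \downarrow 0} \varepsilon^{-1} \mathcal{N}(0,\mathbf{I}_d)\{A^\varepsilon \setminus A\} \le \gamma^*(A)$.

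The main obstacle I anticipate is making the coarea identity and the continuity $\gamma(A^t) \to \gamma(A)$ rigorous in full generality, particularly when $A$ is unbounded or $\partial A$ carries lower-dimensional singular faces. In the unbounded case one must verify that the Gaussian weight $\phi_d$ tames the possibly infinite $\mathscr{H}^{d-1}$-measure of $\partial A^t$ and justifies all the interchanges of limits; in the singular case one should lean on the fact that for convex bodies the one-sided Gaussian Minkowski content always equals the Gaussian surface measure, with no smoothness assumed on $\partial A$. These are standard but nontrivial inputs from convex and geometric measure theory, and isolating them cleanly is where most of the work of the proof actually lies.
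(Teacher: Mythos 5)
Your easy direction is exactly the paper's: the coarea formula for the $1$-Lipschitz function $\delta_A$, together with $\llvert \nabla\delta_A \rrvert = 1$ a.e.\ and $\partial A^t = \{ \delta_A = t \}$ (facts which do need the short argument the paper gives in Proposition~\ref{pr:ConvEl}), turns $\mathcal N(0,\mathbf I_d)\{ A^\varepsilon\setminus A \}$ into $\int_0^\varepsilon \gamma(A^t)\,\mathrm{d}t$, and the hypothesis $A^t\in\mathscr A\cup\{\varnothing\}$ then gives $\gamma^*(\mathscr A)\le\gamma(\mathscr A)$.

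The reverse inequality is where your proposal has a genuine gap. You reduce it to $\gamma(A)\le\liminf_{t\downarrow 0}\gamma(A^t)$ and propose to obtain this either from ``continuity of the Gaussian surface measure along monotone approximation of convex bodies'' or by ``leaning on the fact that the one-sided Gaussian Minkowski content equals the Gaussian surface measure for convex bodies.'' The second fact is, for a single convex set, precisely the assertion being proved -- the paper emphasizes that it is ``believed by some authors to be evident'' but not immediate -- so invoking it is circular. The continuity statement is moreover false in general: for a $(d-1)$-dimensional disk $A$ one has $\gamma(A^t)\to 2\gamma(A)$ as $t\downarrow 0$, so only the Fatou-type lower bound can hold, and it still has to be proved, including for unbounded $A$ and for degenerate $A$, where the topological-boundary definition of $\gamma$ no longer agrees with the variational perimeter and the standard lower semicontinuity of weighted perimeter under $L^1_{\mathrm{loc}}$ convergence does not apply verbatim. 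The paper sidesteps all of this with a direct argument: on $\mathbb R^d\setminus\overline A$ the metric projection $p^\perp_A$ is non-expansive with $J_{d-1}p^\perp_A\le 1$, so Federer's curvilinear Fubini formula (Proposition~\ref{pr:curviFubini}) gives $\int_{A^\varepsilon\setminus A}\phi_d\,\mathrm{d}x \ge \varepsilon\int_{\partial A}\inf_{\llvert v\rrvert \le\varepsilon}\phi_d(\cdot+v)\,\mathrm{d}\mathscr H^{d-1}$, since each fibre over $y\in\partial A$ contains the outer normal segment of length $\varepsilon$; letting $\varepsilon\downarrow 0$ with dominated convergence yields $\gamma(A)\le\gamma^*(A)$ set by set, with no perimeter-continuity input. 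To complete your route you would have to either prove the liminf inequality yourself (handling degenerate and unbounded sets) or cite a precise external result such as Lemma~11 of Livshyts~\cite{Liv14}, which the paper notes is closely related; as written, the crucial step is asserted rather than established.
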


Let $ \mathscr C_d $ be the class of \emph{all} convex sets in $
\mathbb{R}^d $. Denote
$ \gamma_d := \gamma(\mathscr C_d) = \gamma^*(\mathscr C_d) $. It is
known that
$ \gamma_d \le4 d^{1/4} $ -- see Ball~\cite{Bal}. Nazarov~\cite{Naz}
shows that the order $ d^{1/4} $ is correct and improved the upper bound
\emph{asymptotically}, showing that $ \limsup_{d \to\infty}
d^{-1/4} \gamma_d \le(2 \pi)^{1/4} < 0.64 $. Our next result
provides an explicit
bound, which is asymptotically even slightly better than Nazarov's bound.

\begin{theorem}
\label{th:PerimBd}
For all $ d \in\mathbb{N}$, we have
\begin{equation}
\label{eq:PerimBd} \gamma_d \le\sqrt{\frac{2}{\pi}} + 0.59 \bigl(
d^{1/4} - 1 \bigr) < 0.59 d^{1/4} + 0.21 .
\end{equation}
\end{theorem}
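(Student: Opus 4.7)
The plan is to follow the Ball--Nazarov scheme of bounding the Gaussian perimeter of a convex body via its Gauss map, tracking constants explicitly throughout. The shape of~\eqref{eq:PerimBd}---matching $\sqrt{2/\pi}$ exactly at $d=1$ and growing linearly in $d^{1/4}$---suggests a direct (non-inductive) estimate that is already tight at $d=1$, with a dimension-dependent excess controlled by a slope of $0.59$.

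As a first step, since the $\varepsilon$-enlargement of a convex set is convex, the class $\mathscr C_d$ satisfies the hypothesis of Proposition~\ref{pr:PerimAnnNorm}, so $\gamma_d=\gamma^*(\mathscr C_d)$; it is enough to bound $\varepsilon^{-1}\mathcal N(0,\mathbf I_d)\{A^\varepsilon\setminus A\}$ uniformly over convex $A$ and $\varepsilon>0$. By a polyhedral-plus-smoothing approximation, one may assume that $A$ is a convex body with $C^\infty$ boundary of strictly positive Gauss--Kronecker curvature $\kappa$, for which the surface-integral expression of $\gamma(A)$ is well defined. The case $d=1$ is then immediate: a convex subset of $\mathbb R$ is an interval with at most two boundary points, and $\phi_1(x)\le(2\pi)^{-1/2}$, giving $\gamma_1\le\sqrt{2/\pi}$, in agreement with~\eqref{eq:PerimBd} at $d=1$.

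For $d\ge 2$, I would parametrize $\partial A$ by the Gauss map $n\colon\partial A\to S^{d-1}$ and rewrite
\[
\gamma(A)=\int_{S^{d-1}}\frac{\phi_d(x(n))}{\kappa(n)}\,d\sigma(n),
\]
where $x(n)\in\partial A$ is the unique boundary point with outward normal $n$. Convexity gives $|x(n)|\ge h(n):=\langle x(n),n\rangle$, whence $\phi_d(x(n))\le(2\pi)^{-d/2}\exp(-h(n)^2/2)$. The curvature factor $1/\kappa$ is then controlled by a Cauchy--Schwarz argument coupled with an integration-by-parts identity tying $\int_{\partial A} H\,\phi_d\,d\mathcal H^{d-1}$ (with $H$ the mean curvature) to a Gaussian moment over $A$. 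Balancing these two estimates against a free scale parameter produces the $d^{1/4}$ rate: the exponent arises from matching the radial Gaussian decay on the shell $|x|\sim\sqrt d$ against the $\sqrt d$-concentration of the Gaussian measure.

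The main obstacle will be sharpening the leading constant from Nazarov's asymptotic $(2\pi)^{1/4}\approx 0.631$ down to $0.59$, and doing so \emph{uniformly} in $d$ so that the non-asymptotic bound interpolates correctly from $\sqrt{2/\pi}$ at $d=1$. This requires (i) optimizing the scale parameter in the Cauchy--Schwarz step as an explicit function of $d$, (ii) replacing crude Gaussian tail bounds by the sharp Mills-ratio form, and (iii) splitting $\partial A$ according to the size of $h(n)$ so as to exploit the very small contribution of pieces on which $h$ is large and $\phi_d$ is exponentially small. A careful bookkeeping of these three refinements should yield the slope $0.59(d^{1/4}-1)$ on top of the base value~$\sqrt{2/\pi}$.
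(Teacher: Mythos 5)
Your plan correctly identifies the right family of ideas (splitting the boundary according to how radial the outer normal is, sharp Mills-ratio bounds, optimization over a free parameter), but as written it has a genuine gap: every quantitatively essential step is asserted rather than carried out. The Gauss-map reformulation $\gamma(A)=\int_{S^{d-1}}\phi_d(x(n))\kappa(n)^{-1}\,d\sigma(n)$ is just the change of variables back to $\int_{\partial A}\phi_d\,d\mathscr H^{d-1}$ and gains nothing by itself; the work lies in controlling the curvature factor, and the mechanism you invoke --- ``a Cauchy--Schwarz argument coupled with an integration-by-parts identity tying $\int_{\partial A}H\phi_d\,d\mathscr H^{d-1}$ to a Gaussian moment over $A$'' --- is never written down, and the standard divergence identity of this kind involves the support function, $\int_{\partial A}\langle x , n\rangle\phi_d\,d\mathscr H^{d-1}=\int_A(d-|x|^2)\phi_d\,dx$, not the mean curvature. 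Even granting some such identity, a Cauchy--Schwarz balancing of this type is exactly how one obtains Ball-type bounds of order $d^{1/4}$ with constants far above $0.59$; nothing in the sketch produces the specific constant, and you concede as much (``careful bookkeeping \dots should yield the slope $0.59$''). The reduction to smooth, strictly convex bodies also needs an argument (behaviour of the Gaussian perimeter of an arbitrary measurable convex set, including degenerate ones, under approximation), though that part is repairable.

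The deeper problem is the shape of bound you hope for: a single analytic estimate that is exactly tight at $d=1$ and then grows with slope $0.59$ in $d^{1/4}$ is not what any known argument delivers, and it is not how the theorem is actually proved. The paper's proof uses a randomized map onto $\partial C$ --- the radial projection $p^\rho_C$ with probability $1-p$ and the orthogonal projection $p^\perp_C$ with probability $p$ outside $C$ --- together with Federer's curvilinear Fubini theorem (Proposition~\ref{pr:curviFubini}) and Lemma~\ref{lm:rad} to get $\gamma(C)\le 1/\inf_y\xi_C(y,p)$; it then expresses the resulting bound $\bar\gamma_{d,p}$ through the Mills ratio and the function $I(y)=\inf_{x\ge0}(xy+R(x))$, optimizes $p$ at $p^*=0.72$, and establishes the stated inequality analytically only for $d\ge932$ (via Stirling with remainder, Lemmas~\ref{lm:xalphaexp}, \ref{lm:GxalphapMin} and \ref{lm:infMillsBd}, and a convexity argument), while for $d<932$ the inequality is verified numerically by evaluating \eqref{eq:gammadBd1} directly. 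So the interpolating form $\sqrt{2/\pi}+0.59(d^{1/4}-1)$ is, for small and moderate $d$, a numerically certified statement rather than the outcome of a uniform analytic estimate; your proposal would need either to reproduce such a computation or to supply an argument of a strength that it does not currently contain.
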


We defer the proof to Section~\ref{sc:Perim}.

\begin{remark}
Though $ \gamma_d $ is of order $ d^{1/4} $, this does not necessarily
mean that
this is the optimal order of the constant $ K $ in \eqref{eq:ErrBd}.
This remains
an open question.
\end{remark}

There are interesting classes of sets $ \mathscr A $ where there exist
better bounds on
$ \gamma(\mathscr A) $ than those of order $ d^{1/4} $. For the class
of all balls,
$ \gamma(\mathscr A) $ can be bounded independently of the dimension
-- see Sazonov~\cite{SazVec2,SazLNM}.
For the class of all rectangles, it is known that $ \gamma(\mathscr A)
$ is at most
of order $ \sqrt{\log d} $, see Nazarov~\cite{Naz}. Apart from convex
sets, other classes
may also be interesting, e.~g., the class of
unions of balls which
are at least $ \Delta$ apart, where $ \Delta> 0 $ is a fixed number.
Therefore, we derive
a more general result; Theorem~\ref{th:ConvBd} will follow from the
latter and
Theorem~\ref{th:PerimBd}.

To generalize Theorem~\ref{th:ConvBd}, we shall consider a class $
\mathscr A $ of
measurable sets in $ \mathbb{R}^d $. For each $ A \in\mathscr A $,
take a measurable
function $ \rho_A \colon\mathbb{R}^d \to\mathbb{R}$. The latter can
be considered as a generalized signed distance function: typically, one
can take
$ \rho_A = \delta_A $, but we allow for more general functions. For each
$ t \in\mathbb{R}$, define
\[
A^{t \mid\rho} := \bigl\{ x ;\rho_A(x) \le t \bigr\}
.
\]
Next, define the generalized Gaussian perimeter as
\begin{align*}
\gamma^*(A \mid\rho) &:= \sup \biggl\{ \frac{1}{\varepsilon} \mathcal{N}(0,
\mathbf{I}_d)\bigl\{ A^{\varepsilon
\mid\rho} \setminus A \bigr\},
\frac{1}{\varepsilon} \mathcal{N}(0, \mathbf{I}_d)\bigl\{ A \setminus
A^{- \varepsilon\mid\rho} \bigr\} ; \varepsilon> 0 \biggr\} ,
\\*
\gamma^*(\mathscr A \mid\rho) &:= \sup_{A \in\mathscr A} \gamma^*(A \mid
\rho) .
\end{align*}
We shall impose the following assumptions:
\def\labelenumi{(A\arabic{enumi})}
\begin{enumerate}[({A}1)]
\item\label{A:TrScale}\label{A:first}
$ \mathscr A $ is closed under translations and uniform scalings by
factors greater than one.
\item\label{A:At}
For each $ A \in\mathscr A $ and $ t \in\mathbb{R}$,
$ A^{t \mid\rho} \in\mathscr A \cup\{ \varnothing , \mathbb{R}^d
\} $.
\item\label{A:A-eps}
For each $ A \in\mathscr A $ and $ \varepsilon> 0 $, either $ A^{-
\varepsilon\mid\rho} = \varnothing $ or
$ \{ x ;\rho_{A^{- \varepsilon\mid\rho}}(x) < \varepsilon\}
\subseteq A $.
\item\label{A:delta0}
For each $ A \in\mathscr A $, $ \rho_A(x) \le0 $ for all $ x \in A $
and $ \rho_A(x) \ge0 $ for all $ x \notin A $.
\item\label{A:deltaTrans}
For each $ A \in \mathscr A $ and each $ y \in \mathbb{R}^d $, $ \rho_{A + y}(x + y) = \rho_A(x) $ for all $ x \in \mathbb{R}^d $.
\item\label{A:deltaq}
For each $ A \in\mathscr A $ and each $ q \ge1 $, $ |\rho_{q A}(q
x)| \le q |\rho_A(x)| $
for all $ x \in\mathbb{R}^d $.
\item\label{A:deltaNonExp}
For each $ A \in\mathscr A $, $ \rho_A $ is non-expansive on $ \{ x
;\rho_A(x) \ge0 \} $,
i.e., $ |\rho_A(x) - \rho_A(y)| \le|x - y| $ for all $ x, y $ with
$ \rho_A(x) \ge0 $
and $ \rho_A(y) \ge0 $.
\item\label{A:delta''}\label{A:last}
For each $ A \in\mathscr A $, $ \rho_A $ is differentiable on $ \{ x
;\rho_A(x) > 0 \} $.
Moreover, there exists $ \kappa\ge0 $, such that
\[
\bigl\llvert \nabla\rho_A(x) - \nabla\rho_A(y) \bigr
\rrvert \le\frac{\kappa \llvert x -y \rrvert }{\min\{ \rho_A(x), \rho_A(y) \}}
\]
for all $ x, y $ with $ \rho_A(x) > 0 $ and $ \rho_A(y) > 0 $;
throughout this paper, $ \nabla$
denotes the gradient.
\end{enumerate}
In addition, we state the following optional assumption:
\begin{enumerate}
\item[(A\ref{A:TrScale}$'$)] $ \mathscr A $ is closed under symmetric
linear transformations with the smallest
eigenvalue at least one.
\end{enumerate}

\def\labelenumi{(\arabic{enumi})}

\begin{remark}
It is natural to define $ \rho_A $ so that (A\ref{A:deltaq}) is
satisfied with
equality. However, for our main result, Theorem~\ref{th:ClassBd}, only
the inequality
is needed.
\end{remark}

\begin{remark}
Assumptions~(A\ref{A:A-eps})--(A\ref{A:delta''}) are hereditary: if
the pair
$ ( \mathscr A, (\rho_A)_{A \in\mathscr A} ) $
meets them and if $ \mathscr B \subseteq\mathscr A $, the pair
$ ( \mathscr B, (\rho_B)_{B \in\mathscr B} ) $
meets them, too.
\end{remark}

\begin{remark}
\label{rk:gdistfdistf}
With $ \rho_A = \delta_A $, one can easily check that
Assumptions~(A\ref{A:A-eps})--(A\ref{A:deltaNonExp}) are met.
Assumption~(A\ref{A:delta''}) is motivated
by Lemma~2.2 of Bentkus~\cite{BDim} (see Example~\ref{ex:ConvOK} below).
\end{remark}

The following is the main result of this paper.

\begin{theorem}
\label{th:ClassBd}
Let $ W = \sum_{i \in\mathscr I} X_i $ be as in Theorem~\ref{th:ConvBd}
and let $ \mathscr A $ be a class of sets meeting Assumptions~\textup{(A\ref
{A:first})--(A\ref{A:last})}
(along with the underlying functions $ \rho_A $). Then for each $ A
\in\mathscr A $, the
following estimate holds true:
\begin{equation}
\label{eq:ClassBd} \bigl\llvert \mathbb{P}(W \in A) - \mathcal{N}(0,
\mathbf{I}_d) \{ A \} \bigr\rrvert \le\max \bigl\{ 27, 1 + 53
\gamma^*(\mathscr A \mid\rho) \sqrt{1 + \kappa} \bigr\} \sum
_{i \in\mathscr I} \mathbb{E} \llvert X_i \rrvert
^3 .
\end{equation}
In addition, if $ \mathscr A $ also satisfies (A\ref{A:TrScale}$'$),
the preceding bound can be improved to
\begin{equation}
\label{eq:ClassBdAff} \bigl\llvert \mathbb{P}(W \in A) - \mathcal{N}(0,
\mathbf{I}_d) \{ A \} \bigr\rrvert \le\max \bigl\{ 27, 1 + 50
\gamma^*(\mathscr A \mid\rho) \sqrt{1 + \kappa} \bigr\} \sum
_{i \in\mathscr I} \mathbb{E} \llvert X_i \rrvert
^3 .
\end{equation}
\end{theorem}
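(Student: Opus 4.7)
The plan is to combine Stein's method with Gaussian-type smoothing and a G\"otze-style induction on the summands, along the lines of \cite{Gtz} and \cite{BDim} as advertised in the introduction.

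First, I would replace the indicator $\mathbf{1}_A$ by a smooth surrogate $f_{A,\varepsilon}$ built from the generalized signed distance $\rho_A$ — for instance $f_{A,\varepsilon}(x)=\psi(\rho_A(x)/\varepsilon)$ for a fixed smooth non-increasing cutoff $\psi$ equal to $1$ on $(-\infty,-1]$ and to $0$ on $[0,\infty)$, possibly further convolved with a small Gaussian kernel. Assumptions (A\ref{A:At})--(A\ref{A:delta0}) ensure that $\mathbf{1}_A-f_{A,\varepsilon}$ is concentrated in the band $\{x:-\varepsilon\le \rho_A(x)\le\varepsilon\}$, so by the very definition of $\gamma^*(\mathscr A\mid\rho)$ the smoothing bias against $\mathcal N(0,\mathbf I_d)$ is at most $\varepsilon\,\gamma^*(\mathscr A\mid\rho)$. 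The inclusion property (A\ref{A:A-eps}) is what transfers the same kind of bias control to integration against the law of $W$ inside the inductive step below.

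Second, apply Stein's method to $f_{A,\varepsilon}$. The multivariate Stein equation $\Delta h(x)-x\cdot\nabla h(x)=f_{A,\varepsilon}(x)-\mathcal N(0,\mathbf I_d)\{f_{A,\varepsilon}\}$ is solved through the Ornstein--Uhlenbeck semigroup, and the regularity assumptions (A\ref{A:deltaNonExp})--(A\ref{A:delta''}) produce bounds on the first three derivatives of $h$ of the order $\varepsilon^{-k}\sqrt{1+\kappa}\,\gamma^*(\mathscr A\mid\rho)$. Writing
\[
\mathbb E[f_{A,\varepsilon}(W)]-\mathcal N(0,\mathbf I_d)\{f_{A,\varepsilon}\}=\mathbb E[\Delta h(W)-W\cdot\nabla h(W)]
\]
and expanding the right-hand side around $W^{(i)}:=W-X_i$ using independence and the identity $\sum_i\operatorname{Var}(X_i)=\mathbf I_d$ produces a third-order Taylor remainder bounded by a $\nabla^3h$ norm times $\sum_i\mathbb E|X_i|^3$. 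Optimizing $\varepsilon$ to balance this against the $\varepsilon\,\gamma^*(\mathscr A\mid\rho)$ smoothing error gives the asserted shape of the bound.

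Third, to eliminate the spurious $d$-dependent prefactor that a direct Stein estimate would carry, I would invoke G\"otze-type induction on the index set $\mathscr I$: when expanding $\nabla h(W)-\nabla h(W^{(i)})$, replace the law of the rescaled $W^{(i)}$ by $\mathcal N(0,\mathbf I_d)$ via the inductive hypothesis applied to $\{X_j\}_{j\ne i}$ after renormalizing its covariance to the identity. Closure (A\ref{A:TrScale}) guarantees that the rescaled target set stays in $\mathscr A$, while the stronger closure (A\ref{A:TrScale}$'$) permits the full non-uniform rescaling $\boldsymbol\Sigma^{-1/2}$ that saves the last numerical factor and is responsible for the improvement from $53$ to $50$ in \eqref{eq:ClassBdAff}. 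The $\max\{27,\cdot\}$ on the right-hand side absorbs the base of the induction and the trivial bound $\le 1$, which kicks in via the constraint $\sum_i\mathbb E|X_i|^2=d$ forcing $\sum_i\mathbb E|X_i|^3$ not to be too small relative to the variance budget.

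The main obstacle will be obtaining sharp and explicit third-derivative estimates on $h$ despite the very weak regularity (A\ref{A:delta''}) of the driving function $\rho_A$, and propagating the numerical constants through the induction tightly enough to reach $53$ and $50$. The inverse-distance Lipschitz control of $\nabla\rho_A$ in (A\ref{A:delta''}) is precisely calibrated to combine with the OU-smoothing estimates so that the third-derivative bound grows like $\sqrt{1+\kappa}$ rather than $1+\kappa$; matching this with Bentkus's smoothing argument and keeping every constant explicit is the most delicate part of the bookkeeping.
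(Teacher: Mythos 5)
Your overall skeleton does match the paper's proof: smooth the indicator through $\rho_A$ with bias controlled by $\varepsilon\,\gamma^*(\mathscr A\mid\rho)$, run Stein's method through the Ornstein--Uhlenbeck interpolation, compare the leave-one-out sum with a Gaussian via a G\"otze-type recursion after renormalizing its covariance, and use a trivial-bound case split (this is where the $27$ comes from). But two steps, as you have written them, would fail. First, your step 2 cannot deliver the linear Lyapunov rate: a uniform bound on the third derivative of the Stein solution of order $\sqrt{1+\kappa}\,/\varepsilon$, balanced against the smoothing bias $\varepsilon\,\gamma^*$, gives after optimizing $\varepsilon$ an error of order $\bigl(\sum_i\mathbb{E}|X_i|^3\bigr)^{1/2}$ only --- the classical square-root loss for non-smooth test functions. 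The recursion of your step 3 is not an add-on that ``eliminates a $d$-dependent prefactor''; it must be inserted \emph{inside} the third-derivative estimate. In the paper this is Lemma~\ref{lm:OUDerNormApprox}: the OU-time integral is split into two regimes, for small times one uses that $\nabla^2 f_A^{\varepsilon}$ is supported on the $\rho_A$-band whose probability under the (conditional) law of $W_i+\theta X_i$ is again controlled recursively plus by $\gamma^*$, and for large times one uses that the level sets of $f_A^{\varepsilon}$ lie in $\mathscr A$ (part (5) of Lemma~\ref{lm:Smoothdelta} --- note that your optional Gaussian convolution would destroy exactly this property), so the recursion applies directly. Dimension-independence of the constants is obtained elsewhere, by the spherical-symmetry computation of Lemma~\ref{lm:IntDerphidBd} yielding the dimension-free constants $c_1,c_3$; also the $\sqrt{1+\kappa}$ arises only from optimizing the splitting time between the $(1+\kappa)$-regime and the $c_3$-regime, not from the derivative bounds themselves, which carry $1+\kappa$.

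Second, your claim that (A\ref{A:TrScale}) already guarantees that the rescaled set $\boldsymbol{\Sigma}_i^{-1/2}(A-\theta X_i)$ stays in $\mathscr A$ is false: (A\ref{A:TrScale}) gives closure only under translations and \emph{uniform} scalings, whereas $\boldsymbol{\Sigma}_i^{-1/2}$ is a general symmetric expansion; that closure is precisely what (A\ref{A:TrScale}$'$) provides, which is why \eqref{eq:ClassBdAff} is the easier case. For the general bound \eqref{eq:ClassBd} the recursion as you describe it does not close within $\mathscr A$: one must pass to the transformed class $\{\boldsymbol{\Sigma}_i^{-1/2}A'\}$, verify that (A\ref{A:A-eps})--(A\ref{A:last}) persist with the same $\kappa$ and that the perimeter functional grows at most by $1/\sigma_i$ (Lemma~\ref{lm:ClassTrans}), and accordingly define the bootstrap quantity as a supremum over \emph{all} admissible classes, normalized by $\gamma^*(\mathscr A\mid\rho)$; the extra powers of $\sigma_*$ this produces are the actual source of $53$ versus $50$. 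Two further points of bookkeeping you should make explicit: since $\mathscr I$ may be infinite and the recursion is self-referential, it has to be run as a self-improvement of a truncated supremum $K(\beta_0)$ that is finite a priori (yielding $K\le K/2+C$, hence $K\le 2C$); and the threshold $\beta_*=1/27$ is needed not only for the trivial bound but to guarantee $\sigma_i^2\ge 1-\beta_*^{2/3}>0$, so that $\operatorname{Var}(W_i)$ is non-singular and the renormalized recursion is legitimate --- your heuristic about $\sum_i\mathbb{E}|X_i|^2=d$ forcing the third moments to be large plays no role.
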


We provide the proof in the next section.

\begin{remark}
Though explicit, the constants in Theorem~\ref{th:ClassBd} seem to be
far from optimal.
Consider the classical case where $ \mathscr A $ is the class of all
half-lines $ (- \infty, w] $,
where $ w $ runs over $ \mathbb{R}$. It is straightforward to check
that $ \mathscr A $ along with
$ \rho_A = \delta_A $ meets Assumptions~(A\ref{A:first})--(A\ref
{A:last}) with $ \kappa= 1 $.
Observing that $ \gamma^*(\mathscr A \mid\rho) = \gamma^*(\mathscr
A) = 1/\sqrt{2 \pi} $,
estimate~\eqref{eq:ClassBdAff} reduces
to \eqref{eq:ErrBd} with $ K = 29.3 $. This is much worse than $ K =
4.1 $
obtained by Chen and Shao~\cite{CS1} by Stein's method, let alone than
$ K = 0.5583 $ obtained by Shevtsova~\cite{Sevc0.56:E} by Fourier methods.
\end{remark}

Below we give further examples of classes of sets.

\begin{example}
\label{ex:ConvOK}
Consider the class $ \mathscr C_d $ of all measurable convex sets in $
\mathbb{R}^d $, along with
$ \rho_A = \delta_A $, which is defined in $ \mathscr C_d \setminus
\{ \varnothing , \mathbb{R}^d \} $.
Clearly, the latter class satisfies (A\ref{A:TrScale}). It is easy to
verify (A\ref{A:At}).
By Lemma~2.2 of Bentkus~\cite{BDim}, (A\ref{A:delta''}) is met with $
\kappa= 1 $.
By Remark~\ref{rk:gdistfdistf}, all other assumptions are met, too.
\end{example}

\begin{example}
The class of all balls in $ \mathbb{R}^d $ (excluding the empty set)
along with
$ \rho_A = \delta_A $ meets (A\ref{A:TrScale}) and (A\ref{A:At}).
Since the balls
are convex, it meets all Assumptions~(A\ref{A:first})--(A\ref{A:last}).
\end{example}

\begin{example}
For a class of ellipsoids, $ \rho_A = \delta_A $ is not suitable
because an
$ \varepsilon$-neighbor\-hood of an ellipsoid is not an ellipsoid.
However, one can set
$ \rho_A(x) := \delta_{\mathbf{Q} A}(\mathbf{Q} x) $, where $
\mathbf{Q} $
is a linear transformation mapping $ A $ into a ball (may depend on $ A $).
Notice that $ \mathbf{Q} $ must be non-expansive in order to satisfy
(A\ref{A:deltaNonExp}).
\end{example}

\begin{remark}
If the random vectors $ X_i $ are identically distributed, that is, if
$ \mathscr I $
has $ n $ elements and $ X_i $ follow the same distribution as $ \xi
/\sqrt{n} $,
the sum $ \sum_{i \in\mathscr I} \mathbb{E} \llvert X_i \rrvert ^3 $ reduces to $
n^{-1/2} \mathbb{E} \llvert \xi \rrvert ^3 $.
However, for the class of \emph{centered} balls, this rate of
convergence is
suboptimal. Using Fourier analysis, Esseen~\cite{Ess} succeeds to
derive a convergence
rate of order $ n^{-d/(d+1)} $ under the existence of the fourth
moment. This is possible
because of symmetry: that result is in fact an asymptotic expansion of first
order with vanishing first term.

Recently, Stein's method has been used by Gaunt, Pickett and
Reinert~\cite{GPRchi2} to
derive a convergence rate of order $ n^{-1} $, but for sufficiently
smooth radially
symmetric test functions rather than the indicators of centered balls. Applying
Stein's method to non-smooth test functions is not straightforward:
non-smoothness of
test functions needs to be compensated by a kind of smoothness of the
distribution of $ W $
or its modifications.

In the present paper, this is resolved by a `bootstrapping' argument
which is essentially
equivalent to G\"otze's~\cite{Gtz} inductive argument. The
probabilities of the sets
in the class $ \mathscr A $ are a kind of invariant (see \eqref
{eq:ClassBdAffKeta} and
\eqref{eq:ClassBdKeta}). In view of characteristic functions, this is
similar to the argument
introduced by Tihomirov~\cite{Tih}, which combines Stein's idea with
Fourier analysis.
Instead of the set probabilities, the invariant are the expectations of
functions
$ x \mapsto e^{\mathrm{i} \langle t , x \rangle} $ for $ t $ of
order $ O(\sqrt{n}) $. This
suffices to derive a convergence rate of order $ n^{-1/2} $.

Esseen~\cite{Ess} succeeds to go beyond this rate (in dimensions
higher than one)
by deriving a kind of smoothness of the distribution of $ W $ directly: see
Lemma~3 ibidem. This part of the argument seems to have no relationship
with Stein's method.
Similarly, Barbour and \v{C}ekanavi\v{c}ius~\cite{BrC} succeed to
sharply estimate the error
in the asymptotic expansions for integer random variables, but although
the main argument
is based on Stein's method, appropriate smoothness of modifications of
$ W $ is needed
and derived separately: see the inequality~(5.7) ibidem.

Unfortunately, smoothness of $ W $ in view of Lemma~3 of Esseen~\cite{Ess} is
unlikely to be useful in the argument used in this paper: another kind
of smoothness
would be desirable. Stein's method can be successfully combined with
the concentration
inequality approach, as in Chen and Fang~\cite{ChFKoren}. Certain modifications
of that approach could be a key to improvements.
\end{remark}

Now consider an example of a class of non-convex sets.

\begin{example}
Let $ \mathscr A $ be the class of all unions of disjoint intervals on
the real
line, such that the midpoints of any two intervals are at least $
\Delta$ apart,
where $ \Delta> 0 $ is fixed. In this case, $ \delta_A $ is not a suitable
function because it is not sufficiently smooth. We define $ \rho_A $ as
follows (see Figure~\ref{fi:gdistfNonConv}):
\begin{itemize}
\item If $ x \ge b = \sup A $, define $ \rho_A(x) := x - b $.
\item If $ x \le a = \inf A $, define $ \rho_A(x) := a - x $.
\item If $ x \notin A $ and $ b \le x \le a $, where $ b $ and $ a $
are the
endpoints of two successive intervals, define
\[
\rho_A(x) := \frac{1}{a - b} \biggl[ \biggl( \frac{a - b}{2}
\biggr)^2 - \biggl( x - \frac{b + a}{2} \biggr)^2
\biggr] .
\]
\item If $ x $ is an element of an interval with endpoints $ a $ and $
b $,
which constitutes $ A $, define
\[
\rho_A(x) := - \biggl( \frac{b - a}{2} - \biggl\llvert x -
\frac{a + b}{2} \biggr\rrvert \biggr) .
\]
\end{itemize}

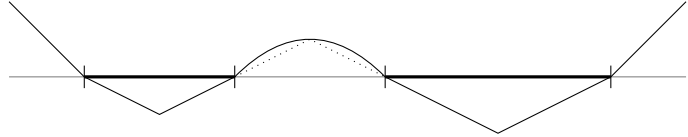
\begin{figure}[b]
\centering
\begin{tikzpicture}
\draw[gray] (-3, 0) -- (6, 0);
\draw[very thick] (-2, 0) -- (0, 0);
\draw[very thick] (2, 0) -- (5, 0);
\foreach\x in {-2, 0, 2, 5} { \draw(\x, -0.15) -- (\x, 0.15); }
\draw(-3, 1)
-- (-2, 0)
-- (-1, -0.5)
-- (0, 0)
.. controls (2/3, 2/3) and (4/3, 2/3) .. (2, 0)
-- (3.5, -0.75)
-- (5, 0)
-- (6, 1);
\draw[dotted] (0, 0) -- (1,0.5) -- (2,0);
\end{tikzpicture}
\caption{Construction of $ \rho_A $ for $ A = [-2, 0] \cup[2, 5] $.}
\label{fi:gdistfNonConv}
\end{figure}

Assumptions~(A\ref{A:TrScale}), (A\ref{A:At}) and (A\ref
{A:delta0})--(A\ref{A:deltaNonExp})
are easily verified (notice that some intervals may be joined or may
disappear under
$ A \mapsto A^{t \mid\rho} $, but the distances between their
midpoints never
decrease). To verify (A\ref{A:A-eps}), observe that for $ \rho_A(x)
\ge\delta_A(x)/2 $
for all $ x \notin A $. Consequently, $ A^{\varepsilon\mid\rho}
\subseteq A^{2 \varepsilon} $
for all $ \varepsilon> 0 $. Moreover, observe that $ A^{-\varepsilon
\mid\rho} = A^{-2 \varepsilon} $
for all $ \varepsilon> 0 $. As a result, either $ A^{- \varepsilon
\mid\rho} = \varnothing $ or
$ \{ x ;\rho_{A^{- \varepsilon\mid\rho}}(x) < \varepsilon\}
\subseteq
\{ x ;\delta_{A^{- 2 \varepsilon}}(x) < 2 \varepsilon\} \subseteq A $.

To verify (A\ref{A:delta''}), observe that if $ x \notin A $ and $ b
\le x \le a $, where
$ b $ and $ a $ are the endpoints of two successive intervals, we have
$ \rho''_A(x) \le2/(a - b) \le1/(2 \rho_A(x)) $. Thus, (A\ref
{A:delta''}) is
met with $ \kappa= 1/2 $.

Finally, we estimate $ \gamma^*(\mathscr A \mid\rho) $.
Let $ A \in\mathscr A $ be a union of disjoint intervals from $ a_j $
to $ b_j $, where
$ j $ runs over $ \mathscr J $, which is a set of successive numbers in
$ \mathbb{Z}$;
we can assume that the intervals appear in the same order as the
indices. Since
$ A^{\varepsilon\mid\rho} \subseteq A^{2 \varepsilon} $ and
$ A^{-\varepsilon\mid\rho} = A^{-2 \varepsilon} $, we have
$
\mathcal{N}(0, 1)(A^\varepsilon\setminus A)
\le\int_0^{2 \varepsilon} \sum_{j \in\mathscr J} [
\phi(a_j - t) + \phi(b_j + t)
] \,\mathrm{d}t
$
and
$
\mathcal{N}(0, 1)(A \setminus A^{- \varepsilon})
\le\int_{- 2 \varepsilon}^0 \sum_{j \in\mathscr J} [
\phi(a_j - t) + \phi( b_j + t)
] \,\mathrm{d}t
$,
where $ \phi$ denotes the standard univariate normal density, i.~e., $
\phi(x)
= (2 \pi)^{-1/2} e^{- x^2/2} $. Fix $ t $, consider the terms with $ a_j $ and $ b_j $ separately, and split the sums over the
indices where
$ a_j - t $ and $ b_j + t $ are positive or negative.
Estimating
$
a_{j+n} - a_j \ge\frac{a_{j+n-1} + b_{j+n-1}}{2} - \frac{a_j + b_j}{2}
\ge(n-1) \Delta
$
and
$
b_{j+n} - b_j \ge\frac{a_{j+n} + b_{j+n}}{2} - \frac{a_{j+1} + b_{j+1}}{2}
\ge(n-1) \Delta
$,
and applying
monotonicity of $ \phi$ on $ (- \infty, 0] $ and on $ [0, \infty) $,
we obtain
after some calculation
\begin{align*}
\frac{1}{\varepsilon} \max \bigl\{ \mathcal{N}(0, 1)\bigl\{ A^\varepsilon
\setminus A \bigr\}, \mathcal{N}(0, 1)\bigl\{A \setminus A^{- \varepsilon}\bigr
\} \bigr\} &\le\frac{8}{\sqrt{2 \pi}} \Biggl( 2 + \sum_{n=1}^\infty
e^{- n^2 \Delta^2/2} \Biggr)
\\
&\le\frac{8}{\sqrt{2 \pi}} \biggl( 2 +
\int_0^\infty e^{- \Delta^2 x^2/2} \,\mathrm{d}x \biggr)
\\
&= \frac{16}{\sqrt{2 \pi}} + \frac{4}\Delta .
\end{align*}
The latter is the desired upper bound on $ \gamma^*(\mathscr A \mid
\rho) $.
\end{example}

\section{Derivation of the bound in the central limit theorem}
\label{sc:CLT}

In this section, we prove Theorem~\ref{th:ClassBd}. We shall use the
ideas of
Bentkus~\cite{BDim} regarding smoothing and G\"otze~\cite{Gtz}
regarding Stein's
method. Before going to the proof,
we need a few auxiliary results; we defer their proofs to the end of
the section.
We also introduce some further notation and conventions.

Let $ x, u_1, u_2, \ldots, u_r \in\mathbb{R}^d $.
By $ \langle\nabla^r f(x) , u_1 \otimes u_2 \otimes\cdots
\otimes u_r \rangle $, we denote
the $ r $-th order derivative of $ f $ at $ x $ in directions $ u_1,
u_2, \ldots, u_r $.
By components, if $ u_i = (u_{i1}, u_{i2}, \ldots, u_{id}) $, we have
\[
\bigl\langle\nabla^r f(x) , u_1 \otimes
u_2 \otimes\cdots\otimes u_r \bigr\rangle = \sum
_{j_1, j_2, \ldots, j_r} \frac{\partial^r f(x)}{\partial x_{j_1} \partial x_{j_2} \cdots
\partial x_{j_r}} u_{1 j_1} u_{2 j_2}
\cdots u_{r j_r} .
\]
Thus, $ \nabla^r f(x) $ is a symmetric tensor of order $ r $. We
identify $ 2 $-tensors with
linear maps or their matrices by $ u \otimes v \equiv u v^T $. Observe
that the Laplace
operator can then be expressed as
\begin{equation}
\label{eq:Laplace} \Delta f(x) = \bigl\langle\nabla^2 f(x) ,
\mathbf{I}_d \bigr\rangle .
\end{equation}
By $ |T|_\vee$, we denote the \emph{injective norm} of tensor $ T $,
that is
\[
\llvert T \rrvert _\vee:= \sup_{|u_1|, |u_2|, \ldots, |u_r| \le1} \bigl\llvert
\langle T , u_1 \otimes u_2 \otimes\cdots\otimes
u_r \rangle \bigr\rrvert .
\]
For symmetric tensors, the supremum can be taken just over equal $ u_i $:

\begin{proposition}[Banach~\cite{Ban}; Bochnak and Siciak~\cite{BSi}]
\label{pr:InjSym}
If $ T $ is a symmetric tensor of order $ r $, then $ |T|_\vee= \sup_{|u| \le1} |\langle T , u^{\otimes r} \rangle| $.
\end{proposition}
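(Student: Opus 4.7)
The plan is to prove the two inequalities separately. The bound $M := \sup_{|u|\le 1}|\langle T, u^{\otimes r}\rangle| \le |T|_\vee$ is immediate from the definition of the injective norm by taking $u_1 = \cdots = u_r = u$. The content of the proposition is the reverse inequality $|T|_\vee \le M$, for which I would follow Banach's variational argument.

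By compactness of the unit sphere, the supremum defining $|T|_\vee$ is attained at some unit vectors $(u_1^*, \ldots, u_r^*)$; after flipping signs of some $u_i^*$ if necessary, I may assume $\langle T, u_1^* \otimes \cdots \otimes u_r^*\rangle = |T|_\vee$. Since $\langle T, u_1 \otimes \cdots \otimes u_r\rangle$ is linear in each argument, the first-order Lagrange condition on the unit sphere in slot $i$ yields, for every $v \in \mathbb{R}^d$ and every $i$,
\[
\langle T, u_1^* \otimes \cdots \otimes v \otimes \cdots \otimes u_r^*\rangle = |T|_\vee \langle v, u_i^*\rangle,
\]
where $v$ occupies slot $i$.

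The heart of the argument is a pair-equalization lemma. Fix $i \ne j$ and put $c := \langle u_i^*, u_j^*\rangle$. Assume first that $u_i^* + u_j^* \ne 0$ and set $v := (u_i^* + u_j^*)/|u_i^* + u_j^*|$. Let $B(a,b)$ denote $\langle T, \cdots\rangle$ with $a$ in slot $i$, $b$ in slot $j$, and the other slots fixed at $u_k^*$. The optimality conditions above (combined with the symmetry of $T$, which makes $B$ symmetric) give $B(u_i^*, u_j^*) = |T|_\vee$ and $B(u_i^*, u_i^*) = B(u_j^*, u_j^*) = c|T|_\vee$, whence
\[
B(v,v) = \frac{c|T|_\vee + 2|T|_\vee + c|T|_\vee}{2 + 2c} = |T|_\vee,
\]
so replacing both $u_i^*$ and $u_j^*$ by $v$ produces another maximizer. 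The degenerate case $u_j^* = -u_i^*$ is handled by the same first-order condition, which forces $|\langle T, u_i^{*\otimes 2} \otimes u_3^* \otimes \cdots \otimes u_r^*\rangle| = |T|_\vee$, realizing an already equalized pair.

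The main obstacle is to iterate the pair-equalization so as to produce a fully equalized maximizer $u_1^* = \cdots = u_r^* = u$, since naive iteration of pair-equalizations need not terminate. The classical resolution is a compactness argument on the set of unit-vector maximizers: one selects a maximizer that extremizes an auxiliary continuous invariant, and shows by means of the replacement lemma that no further equalization can strictly improve the invariant unless all slots already coincide. This finalization is carried out in Banach~\cite{Ban} and Bochnak--Siciak~\cite{BSi}, to which I would refer for the delicate technicalities. Once such a maximizer is produced, $|T|_\vee = |\langle T, u^{\otimes r}\rangle| \le M$ is immediate and the proof is complete.
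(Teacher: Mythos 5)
The paper does not prove this proposition at all: it is imported verbatim from Banach~\cite{Ban} and Bochnak--Siciak~\cite{BSi}, so there is no internal argument to compare against, and your ``sketch plus citation'' is in substance the same treatment the paper gives. The computations you do carry out are correct: the inequality $\sup_{|u|\le 1}|\langle T , u^{\otimes r}\rangle| \le |T|_\vee$ is indeed trivial; for $T \ne 0$ the Lagrange condition at a maximizer correctly identifies the multiplier as $|T|_\vee$, giving $\langle T , u_1^*\otimes\cdots\otimes v\otimes\cdots\otimes u_r^*\rangle = |T|_\vee \langle v , u_i^*\rangle$; and the pair-equalization identity $B(v,v)=|T|_\vee$ with $v=(u_i^*+u_j^*)/|u_i^*+u_j^*|$ follows exactly as you compute (using that symmetry of $T$ makes $B$ symmetric), with the antipodal case handled as you say. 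The one caveat is that the step you defer --- turning repeated pair-equalizations into an actual fully diagonal maximizer, via a suitably chosen auxiliary invariant on the compact set of maximizers --- is precisely the non-trivial content of the theorem; naive invariants (e.g.\ $|\sum_k u_k^*|$ or $\sum_{k<l}\langle u_k^* , u_l^*\rangle$) are not obviously monotone under your replacement, so as a self-contained proof your proposal has a genuine hole at exactly the point you flag. Since the paper itself simply cites the result, your outline is no less complete than the paper's treatment, but it should be read as a correct reduction of the statement to the finalization argument in the cited sources, not as a full proof.
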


Next, denote
\begin{align*}
M_0^*(f) &:= \frac{1}2 \Bigl[ \sup_{w \in\mathbb{R}^d}
f(w) - \inf_{w \in\mathbb{R}^d} f(w) \Bigr] ,
\\
M_r(f) &:= \sup_{\substack{w, z \in\mathbb{R}^d \\ w \ne z}} \frac{ \llvert \nabla^{r-1} f(w) - \nabla^{r-1} f(z) \rrvert _\vee
}{|w - z|} ;\qquad
r = 1, 2, 3, \ldots
\end{align*}
If $ f $ is not everywhere $ (r-1) $-times differentiable, we put $
M_r(f) = \infty$.

\begin{remark}
\label{rk:Rademacher}
This way, if $ M_r(f) < \infty$, then $ \nabla^{r-1} f $ exists
everywhere and is Lip\-schitzian.
In this case, by Rademacher's theorem (see Federer~\cite{Fed}, Theorem~3.1.6),
$ \nabla^{r-1} f $ is almost everywhere differentiable. In addition,
$ M_r(f) = \sup_x \llvert \nabla^r f(x) \rrvert _\vee$,
where the supremum runs over all points where $ \nabla^{r-1} f $ is
differentiable.
\end{remark}

Now we turn to auxiliary results regarding smoothing.
The following one is a counterpart of Lemma~2.3 of Bentkus~\cite{BDim}.

\begin{lemma}
\label{lm:Smoothdelta}
Let $ \mathscr A $ be a class of sets which, along with the underlying
functions $ \rho_A $,
meets Assumptions~\textup{(A\ref{A:first})--(A\ref{A:last})}. Then for each $
A \in\mathscr A $ and
each $ \varepsilon> 0 $, there exist functions $ f_A^\varepsilon,
f_A^{- \varepsilon} \colon\mathbb{R}^d \to\mathbb{R}$,
such that:

\begin{enumerate}[(1)]
\item\label{Smoothdelta01}
$ 0 \le f_A^\varepsilon, f_A^{- \varepsilon} \le1 $.
\item\label{Smoothdelta01+}
$ f_A^\varepsilon(x) = 1 $ for all $ x \in A $ and $ f_A^\varepsilon
(x) = 0 $ for all
$ x \in\mathbb{R}^d \setminus A^{\varepsilon\mid\rho} $.
\item\label{Smoothdelta01-}
$ f_A^{-\varepsilon}(x) = 1 $ for all $ x \in A^{- \varepsilon\mid
\rho} $ and
$ f_A^{-\varepsilon}(x) = 0 $ for all $ x \in\mathbb{R}^d \setminus
A $.
\item\label{SmoothdeltaM}
The following bounds hold true:
\[
M_1\bigl(f_A^\varepsilon\bigr) \le
\frac{2}{\varepsilon}, \quad M_1\bigl(f_A^{- \varepsilon}
\bigr) \le\frac{2}{\varepsilon}, \quad M_2\bigl(f_A^\varepsilon
\bigr) \le\frac{4(1 + \kappa)}{\varepsilon^2}, \quad M_2\bigl(f_A^{- \varepsilon}
\bigr) \le\frac{4(1 + \kappa)}{\varepsilon^2}.
\]
\item\label{SmoothdeltaInt}
For each $ u \in(0, 1) $, $ \{ x ;f_A^\varepsilon(x) \ge u \} \in
\mathscr A \cup\{ \varnothing , \mathbb{R}^d \} $
and $ \{ x ;f_A^{-\varepsilon}(x) \ge u \} \in\mathscr A \cup\{
\varnothing , \mathbb{R}^d \} $.
\end{enumerate}
\end{lemma}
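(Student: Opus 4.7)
The plan is to build both smooth cut-offs by composing the generalized distance function $\rho_A$ with a single one-variable smoothing kernel. Concretely, let $\psi\colon\mathbb{R}\to[0,1]$ be a non-increasing function that equals $1$ on $(-\infty,0]$ and $0$ on $[1,\infty)$; a convenient explicit choice is $\psi(t)=1-2t^2$ on $[0,1/2]$ and $\psi(t)=2(1-t)^2$ on $[1/2,1]$. This $\psi$ is $C^1$ with $\psi'$ Lipschitz, and satisfies $\|\psi'\|_\infty\le 2$, $\|\psi''\|_\infty\le 4$, and crucially $|\psi'(t)|\le 4t$ on $[0,1]$. I would then set
\[
f_A^{\varepsilon}(x):=\psi\bigl(\rho_A(x)/\varepsilon\bigr),\qquad
f_A^{-\varepsilon}(x):=\psi\bigl(\rho_A(x)/\varepsilon+1\bigr),
\]
with the obvious conventions in the degenerate cases $A^{\varepsilon\mid\rho}=\mathbb{R}^d$ or $A^{-\varepsilon\mid\rho}=\varnothing$.

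Properties \eqref{Smoothdelta01}--\eqref{Smoothdelta01-} and \eqref{SmoothdeltaInt} are the easy part. The range bound and the matching of the plateaux of $\psi$ with assumption (A\ref{A:delta0})---namely $\rho_A\le 0$ on $A$ and $\rho_A\ge 0$ off $A$, and likewise for $A^{-\varepsilon\mid\rho}$---yield \eqref{Smoothdelta01}--\eqref{Smoothdelta01-}. For \eqref{SmoothdeltaInt}, since $\psi$ is continuous and strictly decreasing on $[0,1]$, each superlevel set $\{f_A^\varepsilon\ge u\}$ equals $A^{\varepsilon\psi^{-1}(u)\mid\rho}$, which lies in $\mathscr{A}\cup\{\varnothing,\mathbb{R}^d\}$ by (A\ref{A:At}); the analogous statement holds for $f_A^{-\varepsilon}$.

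The main work, and the point where the specific choice of $\psi$ matters, is \eqref{SmoothdeltaM}. The chain rule gives $\nabla f_A^\varepsilon(x)=\varepsilon^{-1}\psi'(\rho_A(x)/\varepsilon)\,\nabla\rho_A(x)$ on $\{\rho_A>0\}$, extended by zero elsewhere. The bound $M_1(f_A^\varepsilon)\le 2/\varepsilon$ is immediate from $\|\psi'\|_\infty\le 2$ and $|\nabla\rho_A|\le 1$, which follows from the non-expansivity assumption (A\ref{A:deltaNonExp}). To bound $M_2$, write $g:=\rho_A/\varepsilon$ and, for $x,y$ with $\rho_A(x),\rho_A(y)>0$ and (WLOG) $\rho_A(x)\le\rho_A(y)$, split
\[
\bigl|\nabla f_A^\varepsilon(x)-\nabla f_A^\varepsilon(y)\bigr|
\le\tfrac{1}{\varepsilon}|\psi'(g(x))|\,|\nabla\rho_A(x)-\nabla\rho_A(y)|
+\tfrac{1}{\varepsilon}|\psi'(g(x))-\psi'(g(y))|\,|\nabla\rho_A(y)|.
\]
The second summand is at most $4|x-y|/\varepsilon^2$ by Lipschitzness of $\psi'$ and the non-expansivity of $\rho_A$. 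For the first, assumption (A\ref{A:delta''}) contributes a factor $\kappa/\rho_A(x)$, which is cancelled exactly by $|\psi'(g(x))|\le 4g(x)=4\rho_A(x)/\varepsilon$, yielding $4\kappa|x-y|/\varepsilon^2$. Summing gives $M_2(f_A^\varepsilon)\le 4(1+\kappa)/\varepsilon^2$; the derivation for $f_A^{-\varepsilon}$ is identical.

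The principal obstacle is precisely the singular character of (A\ref{A:delta''}), whose effective Lipschitz constant for $\nabla\rho_A$ blows up like $1/\rho_A(x)$ as one approaches $\partial A$. The choice of $\psi$ is forced by the need to absorb this blow-up: the vanishing rate $|\psi'(t)|\le 4t$ at $t=0^+$ is tuned so that the prefactor $|\psi'(g(x))|$ exactly kills the offending $1/\rho_A(x)$. A secondary technicality is extending the Lipschitz estimate across $\{\rho_A=0\}$ (and $\{\rho_A=\varepsilon\}$): on $\{0<\rho_A<\varepsilon\}$ one has $|\nabla f_A^\varepsilon(x)|\le 4\rho_A(x)/\varepsilon^2$, so the estimates can be glued by continuity, using the non-expansivity of $\rho_A$ to bound $\rho_A(x)$ by $|x-y|$ whenever $\rho_A(y)\le 0$.
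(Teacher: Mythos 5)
Your construction of $ f_A^{\varepsilon} $ is exactly the paper's: the same kernel $ g = \psi $ (quadratic splines, $ |\psi'(t)| \le 4t $), the same composition with $ \rho_A/\varepsilon $, the same cancellation of the $ 1/\rho_A $ singularity in (A\ref{A:delta''}) by the vanishing of $ \psi' $ at $ 0 $, and the same gluing of the Lipschitz estimate for $ \nabla f_A^\varepsilon $ across $ \{ \rho_A = 0 \} $. That half is fine.

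The gap is in $ f_A^{-\varepsilon} $. Defining it as $ \psi(\rho_A(x)/\varepsilon + 1) $ puts its transition layer in $ \{ -\varepsilon \le \rho_A \le 0 \} \subseteq A $, i.e., \emph{inside} $ A $, where neither (A\ref{A:deltaNonExp}) nor (A\ref{A:delta''}) says anything: non-expansivity is assumed only on $ \{ \rho_A \ge 0 \} $ and the gradient estimate only on $ \{ \rho_A > 0 \} $. So the claim that ``the derivation for $ f_A^{-\varepsilon} $ is identical'' has no support, and in fact Part~(\ref{SmoothdeltaM}) fails for your definition already in the central example $ \rho_A = \delta_A $ with $ A $ convex (Example~\ref{ex:ConvOK}): for a square in $ \mathbb{R}^2 $, $ \nabla\delta_A $ jumps across the diagonals at interior points with $ \delta_A \in (-\varepsilon, 0) $ arbitrarily close to the corners, where $ \psi' \ne 0 $, so $ \nabla f_A^{-\varepsilon} $ is discontinuous and $ M_2(f_A^{-\varepsilon}) = \infty $; even $ M_1(f_A^{-\varepsilon}) \le 2/\varepsilon $ is unjustified in general, since $ \rho_A $ need not be Lipschitz inside $ A $. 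The paper instead sets $ f_A^{-\varepsilon} := f_{A^{-\varepsilon\mid\rho}}^{\varepsilon} $ (and $ \equiv 0 $ when $ A^{-\varepsilon\mid\rho} = \varnothing $): the outward smoothing of the \emph{shrunken} set, which lies in $ \mathscr A $ by (A\ref{A:At}), so that (A\ref{A:deltaNonExp})--(A\ref{A:delta''}) apply to it with the same $ \kappa $ and its transition layer sits where those assumptions hold; Assumption~(A\ref{A:A-eps}) is precisely what guarantees that this function vanishes outside $ A $, i.e., Part~(\ref{Smoothdelta01-}). The fact that your argument never invokes (A\ref{A:A-eps}) is the tell-tale sign that the inner cut-off was mishandled.
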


\begin{proof}
First, define $ f_A^\varepsilon(x) := g ( \frac{\rho
_A(x)}{\varepsilon} ) $, where
\[
g(x) := %
\begin{cases} 1 ;& x \le0 ,
\\
1 - 2 x^2 ;& 0 \le x \le1/2 ,
\\
2 (1-x)^2 ;& 1/2 \le x \le1 ,
\\
0 ;& x \ge1 . \end{cases} %
\]
Requirements~(\ref{Smoothdelta01}) in (\ref{Smoothdelta01+}) are
immediate, while
(\ref{Smoothdelta01-}) is irrelevant for $ f_A^\varepsilon$. To prove
(\ref{SmoothdeltaInt}),
observe that $ \{ x ;f_A^\varepsilon(x) \ge u \} = A^{\varepsilon
g^{-1}(u) \mid\rho} $.
Now we turn to (\ref{SmoothdeltaM}). First, notice that $
M_1(f_A^\varepsilon) \le2/\varepsilon$ because
$ M_1(\rho_A) \le1 $ in $ M_1(g) = 2 $.
Next, $ f_A^\varepsilon$ is continuously differentiable: see
supplementary material \cite{BEJ1072-supp}.
Letting $ B := \{ x ; \rho_A(x) \le 0 \} $, take $ x, y \in\mathbb{R}^d \setminus B $ with $ \rho_A(x) \ge
\rho_A(y) $ and estimate
\begin{align*}
\bigl\llvert \nabla f_A^\varepsilon(x) - \nabla
f_A^\varepsilon(y) \bigr\rrvert\le{}& \frac{1}{\varepsilon} \biggl
\llvert g' \biggl( \frac{\rho_A(x)}{\varepsilon} \biggr) - g'
\biggl( \frac
{\rho_A(y)}{\varepsilon} \biggr) \biggr\rrvert \bigl\llvert \nabla
\rho_A(x) \bigr\rrvert
\\
& {} + \frac{1}{\varepsilon} \biggl\llvert g' \biggl(
\frac{\rho
_A(y)}{\varepsilon} \biggr) \biggr\rrvert \bigl\llvert \nabla\rho_A(x)
- \nabla\rho_A(y) \bigr\rrvert .
\end{align*}
In the first term, we apply $ M_2(g) = 4 $ and $ M_1(\rho_A) \le1 $,
while in the second, we apply $ |g'(t)| \le4 t $ and (A\ref{A:delta''}).
Combining these estimates, we obtain $ |\nabla f_A^\varepsilon(x) -
\nabla f_A^\varepsilon(y)|
\le4(1 + \kappa) \varepsilon^{-2} |x - y| $, noticing that we may
drop the
assumption that $ \rho_A(x) \ge\rho_A(y) $. In other words,
on $ \mathbb{R}^d \setminus B $, $ \nabla f_A^\varepsilon$ is
Lipschitzian with constant
$ 4(1 + \kappa) \varepsilon^{-2} $. Trivially, this also holds true
in the
interior of $ B $. Since $ \nabla f_A^\varepsilon$ is continuous, this also
holds true on the closures of both sets. Since for each $ x \in
\overline{\operatorname{Int}B} $
and each $ y \in\overline{\mathbb{R}^d \setminus B} $, there exists
$ z $ on the line
segment with endpoints $ x $ and $ y $, which is an element of both sets,
$ \nabla f_A^\varepsilon$ is Lipschitzian with the above-mentioned
constant on the
whole $ \mathbb{R}^d $. Thus, $ f_A^\varepsilon$ meets all relevant
requirements.

Now define $ f_A^{- \varepsilon} \equiv0 $ if $ A^{- \varepsilon} =
\varnothing $ and
$ f_A^{- \varepsilon} := f_{A^{- \varepsilon}}^\varepsilon$
otherwise. From the above and from
Assumption~(A\ref{A:A-eps}), it follows that this function also
satisfies all relevant
requirements. This completes the proof.
\end{proof}

Throughout this section, $ \boldsymbol{\Sigma} $ will refer to a
positive-definite
matrix $ \boldsymbol{\Sigma} $ with the largest eigenvalue at most one
and with the smallest eigenvalue $ \sigma^2 $, where $ \sigma> 0 $.

\begin{lemma}
\label{lm:PerimNonStd}
Let $ \mathscr A $ be a class of sets, which, along with the underlying
functions
$ \rho_A $, meets Assumptions~(A\ref{A:TrScale}) and (A\ref
{A:deltaq}). Then the following estimates hold true for all $ \varepsilon > 0 $:
\[
\mathcal{N}(\mu, \boldsymbol{\Sigma}) \bigl\{ A^{\varepsilon\mid\rho} \setminus A \bigr
\} \le\frac{\gamma^*(\mathscr{A} \mid\rho) \varepsilon}{\sigma} \quad\text{and} \quad \mathcal{N}(\mu, \boldsymbol{
\Sigma}) \bigl\{ A \setminus A^{- \varepsilon
\mid\rho} \bigr\} \le\frac{\gamma^*(\mathscr{A} \mid\rho) \varepsilon}{\sigma} .
\]
\end{lemma}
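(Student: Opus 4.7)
The plan is to reduce the integral against $\mathcal{N}(\mu,\boldsymbol{\Sigma})$ to one against $\mathcal{N}(0,\mathbf{I}_d)$, by first absorbing the mean $\mu$ via a translation (permitted by~(A\ref{A:TrScale})), and then absorbing the eigenvalue distortion of $\boldsymbol{\Sigma}$ via a uniform scaling by $1/\sigma\ge 1$ (also permitted by~(A\ref{A:TrScale}), since the largest eigenvalue of $\boldsymbol{\Sigma}$ is at most $1$). The scaling inequality~(A\ref{A:deltaq}) then lets us compare the event at scale~$1$ with the rescaled event at scale~$1/\sigma$.

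Concretely, I would write $Z = \mu + \boldsymbol{\Sigma}^{1/2} Y$ with $Y \sim \mathcal{N}(0,\mathbf{I}_d)$, and set $B := A - \mu \in \mathscr{A}$ and $C := \sigma^{-1} B \in \mathscr{A}$. A direct application of~(A\ref{A:deltaq}) with $q := \sigma^{-1}$ shows that $\{\boldsymbol{\Sigma}^{1/2} Y \in B^{\varepsilon \mid \rho} \setminus B\} \subseteq \{\sigma^{-1} \boldsymbol{\Sigma}^{1/2} Y \in C^{\varepsilon/\sigma \mid \rho} \setminus C\}$: indeed $qx\notin C$ is equivalent to $x\notin B$, and on $\mathbb{R}^d\setminus B$ the inequality $\rho_C(qx)\le q\,\rho_B(x)\le\varepsilon/\sigma$ is immediate from~(A\ref{A:deltaq}). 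The crucial observation is then that the rescaled vector $\sigma^{-1} \boldsymbol{\Sigma}^{1/2} Y$ has covariance $\sigma^{-2} \boldsymbol{\Sigma}$, whose smallest eigenvalue is exactly~$1$. Hence $\sigma^{-2} \boldsymbol{\Sigma} - \mathbf{I}_d$ is positive semidefinite, and $\sigma^{-1} \boldsymbol{\Sigma}^{1/2} Y$ can be realized in distribution as $Y' + V$, where $Y' \sim \mathcal{N}(0,\mathbf{I}_d)$ and $V \sim \mathcal{N}(0,\sigma^{-2}\boldsymbol{\Sigma} - \mathbf{I}_d)$ are independent.

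Conditioning on $V = v$, the event $\{Y' + V \in C^{\varepsilon/\sigma \mid \rho} \setminus C\}$ becomes $\{Y' \in (C - v)^{\varepsilon/\sigma \mid \rho} \setminus (C - v)\}$; since $C - v \in \mathscr{A}$ by~(A\ref{A:TrScale}), the defining supremum of $\gamma^*(\mathscr{A} \mid \rho)$ bounds the corresponding conditional probability by $\gamma^*(\mathscr{A} \mid \rho)\,\varepsilon/\sigma$, and integrating over~$V$ yields the first inequality. The second inequality, for $A \setminus A^{-\varepsilon \mid \rho}$, follows from an entirely parallel argument, now applying~(A\ref{A:deltaq}) on $B$ itself (where $\rho_B\le 0$) to deduce the inclusion $\{\boldsymbol{\Sigma}^{1/2} Y \in B \setminus B^{-\varepsilon \mid \rho}\} \subseteq \{\sigma^{-1} \boldsymbol{\Sigma}^{1/2} Y \in C \setminus C^{-\varepsilon/\sigma \mid \rho}\}$, and then invoking the other half of the $\gamma^*$ definition after the same decomposition and conditioning. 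The main delicate point is the set-theoretic bookkeeping in the scaling step; the Gaussian decomposition and the conditioning-integration are routine.
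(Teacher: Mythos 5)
Your proof is correct and takes essentially the same route as the paper: both decompose the $\mathcal{N}(\mu,\boldsymbol{\Sigma})$ vector into a standard Gaussian part plus an independent Gaussian remainder, translate and rescale the set so as to stay in $\mathscr A$ via (A\ref{A:TrScale}), compare the $\varepsilon$-neighbourhoods under scaling via (A\ref{A:deltaq}), and finish by applying the definition of $\gamma^*(\mathscr A\mid\rho)$ conditionally on the remainder -- you merely perform the scaling before, rather than after, conditioning on the residual Gaussian shift. Note only that, exactly as in the paper's own proof, your translation steps ($A\mapsto A-\mu$, $C\mapsto C-v$) tacitly use (A\ref{A:deltaTrans}), and your restriction to $\mathbb{R}^d\setminus B$ when dropping the absolute value in (A\ref{A:deltaq}) tacitly uses (A\ref{A:delta0}), neither of which is listed among the lemma's stated hypotheses.
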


\begin{proof}
Take independent random vectors $ Z \sim\mathcal{N}(0, \mathbf{I}_d)
$ and
$ R \sim\mathcal{N}(\mu, \boldsymbol{\Sigma} - \sigma^2 \mathbf
{I}_d) $. Clearly,
$ \sigma Z + R \sim\mathcal{N}(\mu, \boldsymbol{\Sigma}) $. Now
observe that, by (A\ref{A:deltaTrans}),
\begin{align*}
\mathcal{N}(\mu, \boldsymbol{\Sigma}) \bigl\{ A^{\varepsilon\mid\rho} \setminus A \bigr
\} &= \mathbb{P} \bigl( \sigma Z + R \in A^{\varepsilon\mid\rho} \setminus A \bigr)
\\
&= \mathbb{P} \bigl[ Z \in\sigma^{-1} (A^{\varepsilon\mid
\rho} - R) \bigm
\backslash\sigma^{-1} (A - R) \bigr]
\\
&= \mathbb{P} \bigl[ Z \in\sigma^{-1} \bigl( (A - R)^{\varepsilon\mid
\rho} \bigr) \bigm
\backslash\sigma^{-1} (A - R) \bigr]
\\
&= \mathbb{E} \left[
\mathcal{N}(0, \mathbf{I}_d) \bigl\{ \sigma^{-1} \bigl( (A -
R)^{\varepsilon\mid\rho} \bigr) \bigm\backslash\sigma^{-1} (A - R) \bigr\} \right]
.
\end{align*}
From (A\ref{A:deltaq}), it follows that
$ \sigma^{-1} B^{\varepsilon\mid\rho} \subseteq(\sigma^{-1}
B)^{(\varepsilon/\sigma) \mid\rho} $
for all $ B \in\mathscr{A} $. Therefore,
\[
\mathcal{N}(\mu, \boldsymbol{\Sigma}) \bigl\{ A^{\varepsilon\mid\rho} \setminus A \bigr
\} \le \mathbb{E} \left[
\mathcal{N}(0, \mathbf{I}_d) \bigl\{ \bigl(
\sigma^{-1} (A - R) \bigr)^{(\varepsilon/\sigma) \mid\rho} \bigm\backslash \bigl(
\sigma^{-1} (A - R) \bigr) \bigr\} \right]
\le \frac{\gamma^*(\mathscr{A} \mid\rho) \varepsilon}{\sigma}
\]
(notice that $ \sigma^{-1}(A - R) \in\mathscr A $ by (A\ref{A:TrScale})).
Analogously, we obtain $ \mathcal{N}(\mu, \boldsymbol{\Sigma}) \{ A
\setminus A^{- \varepsilon} \}
\le\gamma^*(\mathscr{A} \mid\rho) \varepsilon/\sigma$. This
completes the proof.
\end{proof}

\begin{lemma}
\label{lm:ClassTrans}
Let a class $ \mathscr A $ along with the underlying functions $ \rho
_A $ meet Assumptions
\textup{(A\ref{A:first})--(A\ref{A:last})}. Take a linear map $ \mathbf{L}
\colon\mathbb{R}^d \to\mathbb{R}^d $
with the smallest singular value at least one. Then the class $ \tilde
{\mathscr A} :=
\{ \mathbf{L} A ;A \in\mathscr A \} $ along with the underlying functions
$ \tilde\rho_{\tilde A}(x) := \rho_{\mathbf{L}^{-1} \tilde
A}(\mathbf{L}^{-1} x) $
meets these assumptions with the same $ \kappa$ in \textup{(A\ref
{A:delta''})}. Moreover,
\begin{equation}
\label{eq:ClassTransPerim} \gamma^*(\tilde{\mathscr A} \mid\tilde\rho) \le \llVert
\mathbf{L} \rrVert \gamma^*(\mathscr A \mid\rho) .
\end{equation}
\end{lemma}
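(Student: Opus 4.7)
\textbf{Proof plan for Lemma~\ref{lm:ClassTrans}.} The hypothesis that the smallest singular value of $\mathbf{L}$ is at least one is equivalent to $\mathbf{L}$ being invertible with $\lVert\mathbf{L}^{-1}\rVert\le 1$; this is the single fact that drives everything. The plan is first to verify the eight structural assumptions for $(\tilde{\mathscr A},\tilde\rho)$ by a direct change of variables $y=\mathbf{L}^{-1}x$, and then to deduce the perimeter inequality by realising a standard Gaussian in $\tilde A$-coordinates as a degenerate Gaussian in $A$-coordinates with controlled smallest eigenvalue, to which Lemma~\ref{lm:PerimNonStd} applies.

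\textbf{Step 1 (structural assumptions).} For any $\tilde A=\mathbf{L}A\in\tilde{\mathscr A}$, the identities $\tilde A+y=\mathbf{L}(A+\mathbf{L}^{-1}y)$ and $q\tilde A=\mathbf{L}(qA)$ together with (A\ref{A:TrScale}) for $\mathscr A$ give (A\ref{A:TrScale}) for $\tilde{\mathscr A}$. From the definition of $\tilde\rho$ one checks $\tilde A^{t\mid\tilde\rho}=\mathbf{L}(A^{t\mid\rho})$, which gives (A\ref{A:At}); applying this with $t=-\varepsilon$ and using (A\ref{A:A-eps}) for $(\mathscr A,\rho)$ after undoing the change of variables yields (A\ref{A:A-eps}) for $\tilde\rho$. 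Assumptions (A\ref{A:delta0}), (A\ref{A:deltaTrans}) and (A\ref{A:deltaq}) transfer by direct substitution. For the Lipschitz assumption (A\ref{A:deltaNonExp}), if $\tilde\rho_{\tilde A}(x),\tilde\rho_{\tilde A}(y)\ge 0$ then
\[
 \bigl|\tilde\rho_{\tilde A}(x)-\tilde\rho_{\tilde A}(y)\bigr|
 =\bigl|\rho_A(\mathbf{L}^{-1}x)-\rho_A(\mathbf{L}^{-1}y)\bigr|
 \le\bigl|\mathbf{L}^{-1}(x-y)\bigr|\le|x-y|,
\]
using $\lVert\mathbf{L}^{-1}\rVert\le 1$.

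\textbf{Step 2 (preservation of $\kappa$).} For (A\ref{A:delta''}), differentiating gives $\nabla\tilde\rho_{\tilde A}(x)=(\mathbf{L}^{-1})^{\!\top}\nabla\rho_A(\mathbf{L}^{-1}x)$, so
\[
 \bigl|\nabla\tilde\rho_{\tilde A}(x)-\nabla\tilde\rho_{\tilde A}(y)\bigr|
 \le\bigl\lVert\mathbf{L}^{-1}\bigr\rVert\,\bigl|\nabla\rho_A(\mathbf{L}^{-1}x)-\nabla\rho_A(\mathbf{L}^{-1}y)\bigr|
 \le\frac{\kappa\,\lVert\mathbf{L}^{-1}\rVert^{2}\,|x-y|}{\min\{\rho_A(\mathbf{L}^{-1}x),\rho_A(\mathbf{L}^{-1}y)\}},
\]
and since $\lVert\mathbf{L}^{-1}\rVert^{2}\le 1$ and the denominator equals $\min\{\tilde\rho_{\tilde A}(x),\tilde\rho_{\tilde A}(y)\}$, we get (A\ref{A:delta''}) with the same constant $\kappa$.

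\textbf{Step 3 (perimeter inequality).} Write $A=\mathbf{L}^{-1}\tilde A$, so $\tilde A^{\varepsilon\mid\tilde\rho}\setminus\tilde A=\mathbf{L}(A^{\varepsilon\mid\rho}\setminus A)$. If $Z\sim\mathcal{N}(0,\mathbf{I}_d)$, then
\[
 \mathcal{N}(0,\mathbf{I}_d)\bigl\{\tilde A^{\varepsilon\mid\tilde\rho}\setminus\tilde A\bigr\}
 =\mathbb{P}\bigl(\mathbf{L}^{-1}Z\in A^{\varepsilon\mid\rho}\setminus A\bigr),
\]
and $\mathbf{L}^{-1}Z\sim\mathcal{N}(0,\boldsymbol\Sigma)$ with $\boldsymbol\Sigma=\mathbf{L}^{-1}(\mathbf{L}^{-1})^{\!\top}$. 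The eigenvalues of $\boldsymbol\Sigma$ are the reciprocals of the squared singular values of $\mathbf{L}$, so the largest eigenvalue is at most one and the smallest is $1/\lVert\mathbf{L}\rVert^{2}$. Lemma~\ref{lm:PerimNonStd} therefore applies with $\sigma=1/\lVert\mathbf{L}\rVert$, giving
\[
 \frac1\varepsilon\mathcal{N}(0,\mathbf{I}_d)\bigl\{\tilde A^{\varepsilon\mid\tilde\rho}\setminus\tilde A\bigr\}
 \le\lVert\mathbf{L}\rVert\,\gamma^{*}(\mathscr A\mid\rho).
\]
The same argument handles $\tilde A\setminus\tilde A^{-\varepsilon\mid\tilde\rho}$, and taking the supremum over $\varepsilon>0$ and over $\tilde A\in\tilde{\mathscr A}$ yields \eqref{eq:ClassTransPerim}.

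\textbf{Main obstacle.} Nothing is genuinely hard; the only mildly delicate point is Step~2, where one must track two separate factors of $\lVert\mathbf{L}^{-1}\rVert$ — one from the chain rule applied to $\nabla\rho_A$, and one from the change of variables inside the Lipschitz bound — in order to see that $\kappa$ is preserved exactly rather than merely bounded. The rest is bookkeeping under the change of variables $y=\mathbf{L}^{-1}x$.
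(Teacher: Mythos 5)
Your proposal is correct and follows essentially the same route as the paper: verify the assumptions by the change of variables $y=\mathbf{L}^{-1}x$ (with the chain rule and non-expansivity of $\mathbf{L}^{-1}$ handling (A7)--(A8)), and obtain \eqref{eq:ClassTransPerim} by rewriting the standard Gaussian measure of $\tilde A^{\varepsilon\mid\tilde\rho}\setminus\tilde A$ as the $\mathcal{N}(0,\mathbf{L}^{-1}\mathbf{L}^{-T})$-measure of $A^{\varepsilon\mid\rho}\setminus A$ and applying Lemma~\ref{lm:PerimNonStd} with $\sigma=1/\llVert\mathbf{L}\rrVert$. No substantive differences from the paper's proof.
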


\begin{proof}
Assumptions~(A\ref{A:TrScale}), (A\ref{A:A-eps}), (A\ref{A:delta0}), (A\ref{A:deltaTrans})
and (A\ref{A:deltaq})
are straightforward to check. To verify~(A\ref{A:At}), observe that
\[
\tilde{A}^{t \mid\tilde\rho} = \bigl\{ x ;\tilde\rho_{\tilde A}(x) \le t \bigr
\} = \bigl\{ x ;\rho_{\mathbf{L}^{-1} \tilde{A}}\bigl(\mathbf{L}^{-1} x\bigr) \le t
\bigr\} = \mathbf{L} \bigl( \mathbf{L}^{-1} \tilde{A}
\bigr)^{t \mid
\rho} .
\]
Assumption~(A\ref{A:deltaNonExp}) follows from the fact that $
\mathbf{L}^{-1} $ is
non-expansive. To verify (A\ref{A:delta''}), observe that, by the
chain rule,
$ \nabla\tilde{\rho}_{\tilde A}(x) = \mathbf{L}^{-T}
\nabla\rho_{\mathbf{L}^{-1} \tilde A}(\mathbf{L}^{-1} x) $,
and use again that $ \mathbf{L}^{-1} $ is non-expansive.
Finally, observe that
\begin{align*}
\mathcal{N}(0, \mathbf{I}_d) \bigl\{ \tilde{A}^{\varepsilon\mid
\tilde{\rho}} \bigm
\backslash\tilde{A} \bigr\} &= \mathcal{N}(0, \mathbf{I}_d) \bigl\{
\mathbf{L} \bigl( \mathbf{L}^{-1} \tilde{A} \bigr)^{\varepsilon\mid\rho} \bigm
\backslash\tilde{A} \bigr\}
\\
&= \mathcal{N} \bigl( 0, \mathbf{L}^{-1} \mathbf{L}^{-T}
\bigr) \bigl\{ \bigl( \mathbf{L}^{-1} \tilde{A} \bigr)^{\varepsilon\mid\rho}
\bigm\backslash\mathbf{L}^{-1} \tilde{A} \bigr\}
\\
&\le \llVert \mathbf{L} \rrVert \gamma^*(\mathscr{A} \mid\rho) \varepsilon
\end{align*}
by Lemma~\ref{lm:PerimNonStd}. An analogous inequality holds true for
$ \tilde{A} \setminus\tilde{A}^{-\varepsilon\mid\tilde{\rho}} $.
Taking the supremum over $ \tilde A \in\tilde{\mathscr{A}} $, we obtain
\eqref{eq:ClassTransPerim}.
\end{proof}

Now we turn to Stein's method, which will be implemented in view of the proof
of Lemma~1 of Slepian~\cite{Slep}.
We recall
the procedure briefly; for an exposition, see R\"ollin~\cite{RolDim} and
Appendix~H of Chernozhukov, Chetverikov and Kato~\cite{CCK13sup}. Let
$ f $ be a
bounded measurable function. For $ 0 \le\alpha\le\pi/2 $, define
\begin{equation}
\label{eq:Ualpha} \mathscr U_\alpha f(w) :=
\int_{\mathbb{R}^d} f(w \cos\alpha+ z \sin\alpha) \phi_d(z)
\,\mathrm{d}z .
\end{equation}
For a random variable $ W $, $ \mathbb{E} [ \mathscr U_\alpha
f(W) ] $ can be regarded as an
interpolant between $ \mathbb{E} [ f(W) ] $ and $ \mathcal
{N}(0, \mathbf{I}_d)\{ f \} $. A straightforward
calculation shows that
\[
\frac{\mathrm{d}}{\mathrm{d}\alpha} \mathscr U_\alpha f(w) = \mathscr S \mathscr
U_\alpha f(w) \tan\alpha ,
\]
where $ \mathscr S $ denotes the \emph{Stein operator}:
\begin{equation}
\label{eq:SteinOp} \mathscr S g(w) := \Delta g(w) - \bigl\langle\nabla g(w) ,
w \bigr\rangle
\end{equation}
and where $ \Delta$ denotes the Laplacian. Integrating over $ \alpha$ and
taking expectation, we find that
\begin{equation}
\label{eq:SteinSlep} \mathbb{E}f(W) - \mathcal{N}(0, \mathbf{I}_d)\{ f \}
= -
\int _0^{\pi/2} \mathbb{E} \bigl[ \mathscr S \mathscr
U_\alpha f(W) \bigr] \tan\alpha \,\mathrm{d}\alpha .
\end{equation}
Notice that for $ 0 < \alpha\le\pi/2 $, $ \mathscr U_\alpha f $ is
infinitely differentiable,
so that $ \mathscr S \mathscr U_\alpha f $ is well-defined.
Differentiability can be
shown by integration by parts. In particular, we shall need
\begin{align}
\label{eq:f0phi3} \nabla^3 \mathscr U_\alpha f(w) &= -
\cot^3 \alpha
\int_{\mathbb{R}^d} f(w \cos\alpha+ z \sin\alpha) \nabla^3
\phi_d(z) \,\mathrm{d}z
\\
\label{eq:f2phi1} &= - \frac{\cos^3 \alpha}{\sin\alpha}
\int_{\mathbb{R}^d} \nabla^2 f(w \cos\alpha+ z \sin\alpha)
\otimes\nabla\phi_d(z) \,\mathrm{d}z .
\end{align}
The proof is straightforward and is therefore left to the reader (cf.\
Section~2 of
Bhattacharya and Holmes~\cite{BhHExp}). Observe that \eqref
{eq:f2phi1} remains true
for all $ w $ if $ \nabla f $ is Lipschitzian, that is, $ M_2(f) <
\infty$
(see Remark~\ref{rk:Rademacher}).

Now we turn to the \emph{Stein expectation} $ \mathbb{E} [
\mathscr S g(W) ] $.
The following result, which is essentially a counterpart of Lemma~2.9
of G\"otze~\cite{Gtz},
expresses it in a way which is useful for its estimation.

\begin{lemma}[Stein Expectation]\label{lm:Stein}
Let $ X_i $, $ i \in\mathscr I $, be independent $ \mathbb{R}^d
$-valued random vectors with sum
$ W $, which satisfies $ \mathbb{E}W = 0 $ and $ \operatorname
{Var}(W) = \mathbf{I}_d $. Then for any bounded
three times continuously differentiable function $ g $ with bounded derivatives,
\[
\mathbb{E} \bigl[ \mathscr S g(W) \bigr] = \sum_{i \in\mathscr I}
\mathbb{E} \bigl[ \bigl\langle\nabla^3 g(W_i + \theta
X_i) , X_i \otimes\tilde X_i^{\otimes2}
- (1 - \theta) X_i^{\otimes3} \bigr\rangle \bigr] ,
\]
where $ W_i = W - X_i $, $ \tilde X_i $ is an independent copy of $ X_i $,
$ \theta$ is uniformly distributed over $ [0, 1] $, and $ \tilde X_i $
and $ \theta$ are
independent of each other and all other variates.
\end{lemma}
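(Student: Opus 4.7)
The plan is to decompose $\mathscr{S}g(W) = \Delta g(W) - \langle \nabla g(W), W\rangle$ into a ``Laplace piece'' and a ``drift piece,'' and handle each via a Taylor expansion around $W_i = W - X_i$ along the direction $X_i$. The independence of $W_i$ and $X_i$, the centering $\mathbb{E}[X_i] = 0$, and the identity $\sum_i \operatorname{Var}(X_i) = \mathbf{I}_d$ do all the heavy lifting; the independent copies $\tilde X_i$ will enter through $\operatorname{Var}(X_i) = \mathbb{E}[\tilde X_i^{\otimes 2}]$, precisely so that the Laplace piece can be ``unplugged'' from $W$ and reattached to $W_i$.

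For the drift piece, I would write $\langle \nabla g(W), W\rangle = \sum_i \langle \nabla g(W_i + X_i), X_i\rangle$ and apply second-order Taylor with integral remainder:
\[
\nabla g(W_i + X_i) = \nabla g(W_i) + \nabla^2 g(W_i)\, X_i + \int_0^1 (1-s)\, \nabla^3 g(W_i + s X_i)[X_i, X_i]\, \mathrm{d}s.
\]
Pairing with $X_i$ and taking expectations, the first term vanishes (by independence and $\mathbb{E}[X_i] = 0$); the second yields $\sum_i \mathbb{E}\langle \nabla^2 g(W_i), \operatorname{Var}(X_i)\rangle$ (again by independence); and the remainder, once the integration variable $s$ is relabeled as an independent $\theta \sim \operatorname{Uniform}[0,1]$, becomes $\sum_i \mathbb{E}\bigl[(1-\theta)\langle \nabla^3 g(W_i + \theta X_i), X_i^{\otimes 3}\rangle\bigr]$.

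For the Laplace piece, I would use \eqref{eq:Laplace} together with $\mathbf{I}_d = \sum_i \operatorname{Var}(X_i) = \sum_i \mathbb{E}[\tilde X_i^{\otimes 2}]$ and the independence of $\tilde X_i$ from $W$ to rewrite $\mathbb{E}[\Delta g(W)] = \sum_i \mathbb{E}\langle \nabla^2 g(W), \tilde X_i^{\otimes 2}\rangle$. A first-order Taylor of $\nabla^2 g$ along $X_i$, paired with $\tilde X_i^{\otimes 2}$, then gives
\[
\bigl\langle \nabla^2 g(W), \tilde X_i^{\otimes 2}\bigr\rangle = \bigl\langle \nabla^2 g(W_i), \tilde X_i^{\otimes 2}\bigr\rangle + \int_0^1 \bigl\langle \nabla^3 g(W_i + s X_i),\, X_i \otimes \tilde X_i^{\otimes 2}\bigr\rangle\, \mathrm{d}s.
\]
Taking expectations and using independence of $\tilde X_i$ from $(W_i, X_i)$, the leading term becomes $\mathbb{E}\langle \nabla^2 g(W_i), \operatorname{Var}(X_i)\rangle$, and relabeling $s$ as $\theta$ turns the remainder into $\sum_i \mathbb{E}\langle \nabla^3 g(W_i + \theta X_i), X_i \otimes \tilde X_i^{\otimes 2}\rangle$.

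Subtracting the two expansions, the matching $\sum_i \mathbb{E}\langle \nabla^2 g(W_i), \operatorname{Var}(X_i)\rangle$ cancels exactly, and what remains is precisely the claimed identity. The main obstacle is purely bookkeeping: ensuring that the second-derivative terms from the two expansions share the argument $W_i$ rather than $W$, so that they can cancel -- this is exactly the role of routing the Laplace piece through the independent copies $\tilde X_i$. Boundedness of $g$ and its derivatives, combined with $\sum_i \mathbb{E}|X_i|^2 = d$, justify all interchanges of sum, expectation, and Taylor integral by dominated convergence.
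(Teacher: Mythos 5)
Your proposal is correct and follows essentially the same route as the paper: replace $\mathbf{I}_d$ by $\sum_i\operatorname{Var}(X_i)=\sum_i\mathbb{E}[\tilde X_i^{\otimes2}]$, Taylor-expand around $W_i$ with the integral remainder encoded by $\theta\sim\operatorname{Uniform}[0,1]$, and cancel the matching $\langle\nabla^2 g(W_i),\operatorname{Var}(X_i)\rangle$ terms using independence and $\mathbb{E}X_i=0$. The only cosmetic difference is that you expand the Laplacian and drift pieces separately and subtract, whereas the paper carries out a single combined expansion.
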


\begin{proof}
Recalling~\eqref{eq:Laplace}, write
\begin{align*}
\Delta g(W) &= \bigl\langle\nabla^2 g(W) ,
\mathbf{I}_d \bigr\rangle = \bigl\langle\nabla^2 g(W) ,
\operatorname{Var}(W) \bigr\rangle = \sum_{i \in\mathscr I}
\bigl\langle\nabla^2 g(W) , \operatorname {Var}(X_i)
\bigr\rangle
\\
&= \sum_{i \in\mathscr I} \bigl\langle\nabla^2 g(W)
, \mathbb {E} \bigl( X_i^{\otimes2} \bigr) \bigr\rangle .
\end{align*}
Plugging into \eqref{eq:SteinOp}, we obtain
\begin{align*}
\mathbb{E} \bigl[ \mathscr S g(W) \bigr] &= \sum_{i \in\mathscr I}
\mathbb{E} \bigl[ \bigl\langle\nabla^2 g(W_i +
X_i) , \mathbb {E}X_i^{\otimes2} \bigr\rangle
- \bigl\langle\nabla g(W_i + X_i) ,
X_i \bigr\rangle \bigr]
\\
&= \sum_{i \in\mathscr I} \mathbb{E} \bigl[ \bigl\langle
\nabla^2 g(W_i + X_i) , \tilde
X_i^{\otimes2} \bigr\rangle - \bigl\langle\nabla
g(W_i + X_i) , X_i \bigr\rangle
\bigr] .
\end{align*}
Taylor expansion centered at $ W_i $ yields
\begin{align*}
\mathbb{E} \bigl[ \mathscr S g(W) \bigr] ={}& \sum_{i \in\mathscr I}
\mathbb{E} \bigl[ \bigl\langle\nabla^2 g(W_i) ,
\tilde X_i^{\otimes2} \bigr\rangle + \bigl\langle
\nabla^3 g(W_i + \theta X_i) ,
X_i \otimes\tilde X_i^{\otimes2} \bigr\rangle
\\
& {} - \bigl\langle\nabla g(W_i) ,
X_i \bigr\rangle - \bigl\langle\nabla^2
g(W_i) , X_i^{\otimes2} \bigr\rangle - (1 -
\theta) \bigl\langle\nabla^3 g(W_i + \theta
X_i) , X_i^{\otimes3} \bigr\rangle \bigr] .
\end{align*}
By independence, the first and the fourth term cancel and the third
term vanishes because\break
$ \mathbb{E}X_i = 0 $. This completes the proof.
\end{proof}

Now we turn to the estimation of several integrals related to the
multivariate normal distribution.
Define constants $ c_0, c_1, c_2, \ldots$ by
\[
c_r :=
\int_{- \infty}^\infty \bigl\llvert \phi^{(r)}_1(z)
\bigr\rrvert \,\mathrm{d}z .
\]

\begin{lemma}
\label{lm:IntDerphidBd}
For each bounded measurable function $ f $, each $ r \in\mathbb{N}$
and each $ u \in\mathbb{R}^d $,
we have
\[
\biggl\llvert
\int_{\mathbb{R}^d} f(z) \bigl\langle\nabla^r
\phi_d(z) , u^{\otimes r} \bigr\rangle \,\mathrm{d}z \biggr
\rrvert \le c_r M_0^*(f) \llvert u \rrvert ^r
.
\]
\end{lemma}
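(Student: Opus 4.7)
The plan is to reduce the multivariate integral to essentially a one-dimensional one by exploiting the rotational invariance of $\phi_d$, and then to subtract a well-chosen constant from $f$ to take advantage of the identity $\int_{\mathbb{R}} \phi_1^{(r)}(z) \, \mathrm{d}z = 0$ for $r \ge 1$.

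First, I would dispose of the trivial case $u = 0$ and assume $|u| > 0$. Let $R$ be an orthogonal matrix with $R(|u|\,e_1) = u$, where $e_1 = (1,0,\ldots,0)$, and perform the change of variables $z = Rw$. Since $\phi_d$ is rotationally invariant and the directional derivative $\langle \nabla^r \phi_d(z), u^{\otimes r}\rangle$ equals $\frac{\mathrm{d}^r}{\mathrm{d}t^r}\phi_d(z+tu)\bigl|_{t=0}$, a one-line computation using $\phi_d(R(w + t|u|e_1)) = \phi_d(w + t|u|e_1)$ gives
\[
\langle \nabla^r \phi_d(Rw), u^{\otimes r}\rangle = |u|^r \, \partial_1^r \phi_d(w).
\]
Since the Jacobian is unity, the integral in question becomes $|u|^r \int_{\mathbb{R}^d} f(Rw)\,\partial_1^r\phi_d(w)\,\mathrm{d}w$.

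Next, I would exploit the product structure $\phi_d(w) = \phi_1(w_1)\phi_{d-1}(w_2,\ldots,w_d)$, giving $\partial_1^r \phi_d(w) = \phi_1^{(r)}(w_1)\,\phi_{d-1}(w_2,\ldots,w_d)$. Because $r \ge 1$ and $\phi_1^{(r-1)}$ vanishes at $\pm\infty$, we have $\int_{\mathbb{R}}\phi_1^{(r)}(z)\,\mathrm{d}z = 0$, so Fubini permits subtracting any constant $c$ from $f(Rw)$ without changing the integral. Choosing $c := \frac{1}{2}\bigl[\sup_w f(w) + \inf_w f(w)\bigr]$ (well defined since $f$ is bounded) forces $|f(Rw) - c| \le M_0^*(f)$ pointwise.

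Finally, the absolute value bound followed by Fubini yields
\[
\biggl|\int_{\mathbb{R}^d} f(Rw)\,\partial_1^r\phi_d(w)\,\mathrm{d}w\biggr| \le M_0^*(f) \int_{\mathbb{R}} \bigl|\phi_1^{(r)}(w_1)\bigr|\,\mathrm{d}w_1 \int_{\mathbb{R}^{d-1}}\phi_{d-1}(w_2,\ldots,w_d)\,\mathrm{d}w_2\cdots\mathrm{d}w_d = c_r M_0^*(f),
\]
which after multiplying by $|u|^r$ is the claimed inequality. There is no real obstacle here; the only subtlety is correctly justifying the directional-derivative identity under rotation, and this follows immediately from the chain rule combined with the orthogonal invariance of $\phi_d$.
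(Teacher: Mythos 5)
Your proposal is correct and follows essentially the same route as the paper: reduce to the direction $e_1$ via the spherical symmetry of $\phi_d$, recenter $f$ by the constant $(\sup f+\inf f)/2$ using the fact that the derivative integrates to zero, and factor $\phi_d$ to identify the remaining integral as $c_r$. The only cosmetic difference is that you justify the constant-subtraction after the one-dimensional reduction via $\int\phi_1^{(r)}=0$, while the paper does it beforehand by noting that $x\mapsto\int\phi_d(z+x)\,\mathrm{d}z$ is constant; both are valid.
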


\begin{proof}
First, observe that since the function $ F(x) = \int_{\mathbb{R}^d}
\phi_d(z + x) \,\mathrm{d}z $
is constant, we have
$ \int_{\mathbb{R}^d} \langle\nabla^r \phi_d(z) , u^{\otimes
r} \rangle \,\mathrm{d}z
= \langle\nabla^r F(0) , u^{\otimes r} \rangle = 0 $.
Therefore, $ f $ can be replaced by $ f - b $, where $ b $ is arbitrary
constant.
As a result,
\[
\biggl\llvert
\int_{\mathbb{R}^d} f(z) \bigl\langle\nabla^r
\phi_d(z) , u^{\otimes r} \bigr\rangle \,\mathrm{d}z \biggr
\rrvert \le \sup \llvert f - b \rrvert
\int_{\mathbb{R}^d} \bigl\llvert \bigl\langle\nabla^r
\phi_d(z) , u^{\otimes r} \bigr\rangle \bigr\rrvert
\,\mathrm{d}z .
\]
Choosing $ b = (\inf f + \sup f)/2 $, we have $ \sup|f - b| = M_0^*(f)
$. Next,
since $ \phi_d $ is spherically symmetric, we can replace $ u $ by $
|u| e_1 $, where
$ e_1 = (1, 0, \ldots, 0) $. Writing $ z = (z_1, z') $, we have $
\langle\nabla^r \phi_d(z) , e_1^{\otimes r} \rangle
= \phi_1^{(r)}(z_1) \phi_{d-1}(z') $, so that
\[
\int_{\mathbb{R}^d} \bigl\llvert \bigl\langle\nabla^r
\phi_d(z) , e_1^{\otimes r} \bigr\rangle \bigr
\rrvert \,\mathrm{d}z =
\int_{\mathbb{R}} \bigl\llvert \phi_1^{(r)}(z_1)
\bigr\rrvert \,\mathrm{d}z_1
\int_{\mathbb{R}^{d-1}} \phi_{d-1}\bigl(z'\bigr)
\,\mathrm{d}z' = c_r .
\]
Combining this with previous observations, the result follows.
\end{proof}

\begin{remark}
\label{rk:BhHExpGain}
At this step, Bhattacharya and Holmes~\cite{BhHExp} gain the extra
factor of $ d^{3/2} $
in their bound. Instead of taking advantage of spherical symmetry, they
estimate by
components -- see the estimates~(3.12)--(3.15) ibidem. G\"otze's
paper~\cite{Gtz} comes to
this step in the estimate~(2.7) ibidem, where the result of Lemma~\ref
{lm:IntDerphidBd} is
actually used, but no argument is provided.
\end{remark}

\begin{lemma}
\label{lm:OUDerNorm}
Let $ f \colon\mathbb{R}^d \to\mathbb{R}$ be bounded and
measurable. Take
$ 0 < \alpha\le\pi/2 $. Then for all $ r \in\mathbb{N}$ and all $
\mu, u \in\mathbb{R}^d $,
\[
\bigl\llvert \bigl\langle\mathcal{N}(\mu, \boldsymbol{\Sigma}) \bigl\{
\nabla^r \mathscr U_\alpha f \bigr\} , u^{\otimes r}
\bigr\rangle \bigr\rrvert \le c_r M_0^*(f)
\frac{\cos^r \alpha}{\sigma^r} \llvert u \rrvert ^r .
\]
\end{lemma}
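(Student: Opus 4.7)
The plan is to reduce the inequality to Lemma~\ref{lm:IntDerphidBd} by exploiting the Gaussian smoothing obtained from integrating over $W \sim \mathcal{N}(\mu,\boldsymbol{\Sigma})$. Instead of bounding $\nabla^r\mathscr U_\alpha f(w)$ pointwise (which, via \eqref{eq:f0phi3}, would only give a factor of $\cot^r\alpha$ and would thus be too weak), I first average over $W$ and then differentiate in the direction $u$. Since $\mathscr U_\alpha f$ is smooth with uniformly bounded derivatives of every order for $\alpha>0$, differentiation and expectation may be interchanged. Writing $\langle \nabla^r g(w), u^{\otimes r}\rangle = \frac{\mathrm{d}^r}{\mathrm{d}t^r}\big|_{t=0} g(w+tu)$ and letting $Z \sim \mathcal{N}(0,\mathbf{I}_d)$ be independent of $W$, I observe that $Y := W\cos\alpha + Z\sin\alpha \sim \mathcal{N}(\mu\cos\alpha, \boldsymbol{\Gamma})$ with $\boldsymbol{\Gamma} := \cos^2\alpha\,\boldsymbol{\Sigma} + \sin^2\alpha\,\mathbf{I}_d$. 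Denoting the density of $Y$ by $p$, the identity $\mathbb{E}_W[\mathscr U_\alpha f(W+tu)] = \mathbb{E}[f(Y+tu\cos\alpha)] = \int f(s)\,p(s-tu\cos\alpha)\,\mathrm{d}s$ together with differentiation under the integral sign (justified since $\nabla^r p$ is integrable) yields
\[
\bigl\langle \mathcal{N}(\mu,\boldsymbol{\Sigma})\bigl\{\nabla^r\mathscr U_\alpha f\bigr\}, u^{\otimes r}\bigr\rangle = (-\cos\alpha)^r\int f(s)\,\bigl\langle \nabla^r p(s), u^{\otimes r}\bigr\rangle\,\mathrm{d}s.
\]

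To handle the remaining integral, I reduce $p$ to the standard Gaussian density. Writing $p(s) = |\det\boldsymbol{\Gamma}^{-1/2}|\,\phi_d(\boldsymbol{\Gamma}^{-1/2}(s-\mu\cos\alpha))$, a careful application of the chain rule gives $\langle\nabla^r p(s), u^{\otimes r}\rangle = |\det\boldsymbol{\Gamma}^{-1/2}|\,\langle\nabla^r\phi_d(\tilde s), \tilde u^{\otimes r}\rangle$, where $\tilde s := \boldsymbol{\Gamma}^{-1/2}(s-\mu\cos\alpha)$ and $\tilde u := \boldsymbol{\Gamma}^{-1/2}u$. Substituting and changing variables from $s$ to $\tilde s$ transforms the integral into $\int \tilde f(\tilde s)\,\langle\nabla^r\phi_d(\tilde s),\tilde u^{\otimes r}\rangle\,\mathrm{d}\tilde s$, where $\tilde f(\tilde s) := f(\boldsymbol{\Gamma}^{1/2}\tilde s + \mu\cos\alpha)$; since $\tilde f$ is an affine reparametrization of $f$, we have $M_0^*(\tilde f) = M_0^*(f)$. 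Lemma~\ref{lm:IntDerphidBd} then bounds this integral by $c_r M_0^*(f)\,|\tilde u|^r$.

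Finally, I control $|\tilde u| = |\boldsymbol{\Gamma}^{-1/2}u|$. The matrix $\boldsymbol{\Gamma}$ is simultaneously diagonalizable with $\boldsymbol{\Sigma}$, with eigenvalues $\cos^2\alpha\,\lambda_i + \sin^2\alpha \ge \cos^2\alpha\,\sigma^2 + \sin^2\alpha \ge \sigma^2$, where the last inequality uses $\sigma^2 \le 1$. Hence $\llVert \boldsymbol{\Gamma}^{-1/2} \rrVert \le 1/\sigma$ and $|\tilde u| \le |u|/\sigma$; combining with the factor $\cos^r\alpha$ extracted in the first display gives the claim. I expect the main (but minor) obstacle to be the tensor-chain-rule bookkeeping in the change of variables; otherwise the argument is a routine combination of Gaussian manipulations and a direct application of Lemma~\ref{lm:IntDerphidBd}.
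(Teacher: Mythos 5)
Your argument is correct and essentially the same as the paper's: the paper defines $F(\mu):=\mathbb{E}[\mathscr U_\alpha f(\boldsymbol{\Sigma}^{1/2}Z+\mu)]$, rewrites it as an integral of $f$ against the Gaussian density of $W\cos\alpha+Z'\sin\alpha$ (your $Y$, with $\mathbf{Q}_\alpha=\boldsymbol{\Gamma}^{1/2}$), differentiates to pull out $\cos^r\alpha$ and put $\nabla^r$ onto $\phi_d$, applies Lemma~\ref{lm:IntDerphidBd} with $v=\mathbf{Q}_\alpha^{-1}u$, and bounds $\llVert\mathbf{Q}_\alpha^{-1}\rrVert\le 1/\sigma$ exactly as you do. The only cosmetic difference is that you write the density of $Y$ explicitly and change variables, whereas the paper performs the equivalent substitution inside the integral.
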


\begin{remark}
\label{rk:ETensor}
The expression $ \mathcal{N}(\mu, \boldsymbol{\Sigma}) \{
\nabla^r \mathscr U_\alpha f \} $
is an expectation of a random tensor of order $ r $ and is therefore a
deterministic tensor.
This allows us to define
\newline
$ \langle\mathcal{N}(\mu, \boldsymbol{\Sigma}) \{ \nabla^r
\mathscr U_\alpha f \} , u^{\otimes r} \rangle $.
\end{remark}

\begin{proof}[Proof of Lemma~\ref{lm:OUDerNorm}]
Write
\begin{equation}
\label{eq:OUDerNorm:DerF} \bigl\langle\mathcal{N}(\mu, \boldsymbol{\Sigma}) \bigl\{
\nabla^r \mathscr U_\alpha f \bigr\} , u^{\otimes r}
\bigr\rangle = \mathbb{E} \bigl[ \bigl\langle\nabla^r \mathscr
U_\alpha f\bigl(\boldsymbol{\Sigma}^{1/2} Z + \mu\bigr) ,
u^{\otimes r} \bigr\rangle \bigr] = \bigl\langle\nabla^r F(\mu)
, u^{\otimes r} \bigr\rangle ,
\end{equation}
where $ F(\mu) := \mathbb{E} [ \mathscr U_\alpha f(\boldsymbol{\Sigma}^{1/2} Z + \mu) ] $ and where
$ Z $ is a standard $ d $-variate normal random vector. If $ Z' $ is
another such vector
independent of $ Z $, we can write
\[
F(\mu) = \mathbb{E} \bigl[ f \bigl(\bigl(\boldsymbol{\Sigma}^{1/2} Z +
\mu\bigr) \cos \alpha+ Z' \sin\alpha \bigr) \bigr] =
\int_{\mathbb{R}^d} f(\mu\cos\alpha+ \mathbf{Q}_\alpha z)
\phi_d(z) \,\mathrm{d}z ,
\]
where $ \mathbf{Q}_\alpha:= (\boldsymbol{\Sigma} \cos^2 \alpha+
\mathbf{I}_d \sin^2 \alpha)^{1/2} $.
Substituting $ y = \mathbf{Q}_\alpha^{-1} \mu\cos\alpha+ z $, we obtain
\[
F(\mu) =
\int_{\mathbb{R}^d} f(\mathbf{Q}_\alpha y) \phi_d
\bigl( y - \mathbf{Q}_\alpha^{-1} \mu\cos\alpha \bigr)
\,\mathrm{d}y .
\]
Differentiation yields
\begin{align*}
\bigl\langle\nabla^r F(\mu) , u^{\otimes r} \bigr\rangle &=
(-1)^r \cos^r \alpha
\int_{\mathbb{R}^d} f(\mathbf{Q}_\alpha y) \bigl\langle
\nabla^r \phi _d \bigl( y - \mathbf{Q}_\alpha^{-1}
\mu\cos\alpha \bigr) , v^{\otimes r} \bigr\rangle \,\mathrm{d}y
\\
&= (-1)^r \cos^r \alpha
\int_{\mathbb{R}^d} f(\mu\cos\alpha+ \mathbf{Q}_\alpha z) \bigl
\langle\nabla^r \phi_d(z) , v^{\otimes r} \bigr
\rangle \,\mathrm{d}z ,
\end{align*}
where $ v = \mathbf{Q}_\alpha^{-1} u $. By Lemma~\ref
{lm:IntDerphidBd}, we can estimate
\begin{equation}
\label{eq:OUDerNorm:cr} \bigl\llvert \bigl\langle\nabla^r F(\mu) ,
u^{\otimes r} \bigr\rangle \bigr\rrvert \le c_r \cos^r
\alpha M_0^*(f) \llvert v \rrvert ^r .
\end{equation}
Noting that $ \| \mathbf{Q}_\alpha^{-1} \| =
(\sigma^2 \cos^2 \alpha+ \sin^2 \alpha)^{-1/2} \le1/\sigma$ and plugging
into \eqref{eq:OUDerNorm:cr} and \eqref{eq:OUDerNorm:DerF} in turn,
the result follows.
\end{proof}

\begin{lemma}
\label{lm:OUDerNormApprox}
Let $ \mathscr A $ be a family of measurable sets in $ \mathbb{R}^d $,
which, along with
the underlying functions $ \rho_A $, meets Assumptions \textup{(A\ref
{A:first})--(A\ref{A:last})}.
Take an $ \mathbb{R}^d $-valued random vector $ W $, such that there
exist a vector
$ \mu\in\mathbb{R}^d $, a positive-definite matrix $ \boldsymbol{\Sigma} $ and a constant $ D \ge0 $,
such that for each $ A \in\mathscr A $,
\begin{equation}
\label{eq:OUDerNormApproxAss} \bigl\llvert \mathbb{P}(W \in A) - \mathcal{N}(\mu, \boldsymbol{
\Sigma}) (A) \bigr\rrvert \le D .
\end{equation}
Then for each $ \varepsilon> 0 $ and each $ f \in\{ f_A^\varepsilon,
f_A^{- \varepsilon} \} $,
where $ f_A^\varepsilon$ and $ f_A^{- \varepsilon} $ are as in
Lemma~\ref{lm:Smoothdelta}, we have
\begin{equation}
\label{eq:OUDerNormApproxBd}
\int_0^{\pi/2} \bigl\llvert \mathbb{E} \bigl(
\nabla^3 \mathscr U_\alpha f(W) \bigr) \bigr\rrvert
_\vee\tan\alpha \,\mathrm{d}\alpha \le \frac{c_3}{6 \sigma^3} + \sqrt{2(1 +
\kappa) c_1 c_3} \biggl( \frac{\gamma^*(\mathscr A
\mid\rho)}{\sigma} +
\frac{4 D}{\varepsilon} \biggr) .
\end{equation}
\end{lemma}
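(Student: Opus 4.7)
The plan is to write $ \mathbb{E}[\nabla^3 \mathscr{U}_\alpha f(W)] = \mathbb{E}[\nabla^3 \mathscr{U}_\alpha f(Y)] + E_W $ with $ Y \sim \mathcal{N}(\mu, \boldsymbol{\Sigma}) $. Lemma~\ref{lm:OUDerNorm}, applied with $ M_0^*(f) \le 1/2 $ (which holds by property~(\ref{Smoothdelta01}) of Lemma~\ref{lm:Smoothdelta}), bounds the first summand by $ c_3 \cos^3\alpha/(2\sigma^3) $, whose integral against $ \tan\alpha $ over $ (0, \pi/2) $ equals exactly $ c_3/(6\sigma^3) $, matching the first term in \eqref{eq:OUDerNormApproxBd}. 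It then remains to absorb the error $ E_W $ into the second summand.

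The key idea is to bound $ E_W $ in two different ways, coming from the two representations \eqref{eq:f0phi3} and \eqref{eq:f2phi1}. From \eqref{eq:f0phi3},
\[
E_W = -\cot^3\alpha \int R(z)\, \nabla^3\phi_d(z)\, \mathrm{d}z, \qquad R(z) := \mathbb{E}f(W\cos\alpha + z\sin\alpha) - \mathbb{E}f(Y\cos\alpha + z\sin\alpha) .
\]
The layer-cake identity $ f = \int_0^1 \mathbf{1}_{\{f \ge u\}}\,\mathrm{d}u $ (valid since $ 0 \le f \le 1 $), property~(\ref{SmoothdeltaInt}) of Lemma~\ref{lm:Smoothdelta}, and assumption~(A\ref{A:TrScale}) (closedness of $ \mathscr{A} $ under translations and scalings by $ 1/\cos\alpha \ge 1 $) together give $ |R(z)| \le D $ uniformly in $ z $; Lemma~\ref{lm:IntDerphidBd} then yields $ |E_W|_\vee \le c_3 \cot^3\alpha \, D $, a bound usable for $ \alpha $ bounded away from $ 0 $. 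From \eqref{eq:f2phi1}, on the other hand,
\[
E_W = -\frac{\cos^3\alpha}{\sin\alpha}\int \bigl(\mathbb{E}\nabla^2 f(W\cos\alpha + z\sin\alpha) - \mathbb{E}\nabla^2 f(Y\cos\alpha + z\sin\alpha)\bigr) \otimes \nabla\phi_d(z)\, \mathrm{d}z ,
\]
and the crucial observation is that $ \nabla^2 f_A^{\pm\varepsilon} $ is supported on the narrow transition region $ T $ where $ f $ strictly lies between $ 0 $ and $ 1 $. By construction, $ T $ is a difference of two sets from $ \mathscr{A} \cup \{\varnothing, \mathbb{R}^d\} $ (e.g.\ $ T = A^{\varepsilon \mid \rho} \setminus A $ for $ f = f_A^\varepsilon $). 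Applying Lemma~\ref{lm:PerimNonStd} to the covariance $ \boldsymbol{\Sigma}\cos^2\alpha $, whose smallest eigenvalue is $ \sigma^2\cos^2\alpha $, yields $ \mathbb{P}(Y\cos\alpha + z\sin\alpha \in T) \le \gamma^*(\mathscr{A}\mid\rho)\,\varepsilon/(\sigma\cos\alpha) $, and \eqref{eq:OUDerNormApproxAss} applied once for $ A^{\varepsilon\mid\rho} $ and once for $ A $ gives the analogous $ W $-bound plus an additive $ 2D $. Combined with $ M_2(f) \le 4(1+\kappa)/\varepsilon^2 $ from Lemma~\ref{lm:Smoothdelta}~(\ref{SmoothdeltaM}) and Lemma~\ref{lm:IntDerphidBd} with $ c_1 $, this delivers a ``small-$\alpha$'' bound of the form $ |E_W|_\vee \le \frac{C\, c_1(1+\kappa)\cos^2\alpha}{\varepsilon \sin\alpha}\bigl(\gamma^*(\mathscr{A}\mid\rho)/\sigma + C' D/\varepsilon\bigr) $ for explicit absolute constants $ C, C' $.

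Finally, I would split the integral at a cutoff $ \alpha^* \in (0, \pi/2) $, applying the small-$\alpha$ bound on $ (0, \alpha^*) $ and the large-$\alpha$ bound on $ (\alpha^*, \pi/2) $, noting that multiplication by $ \tan\alpha $ tames both singularities (since $ \tan\alpha \cdot \cos^2\alpha/\sin\alpha = \cos\alpha $ and $ \tan\alpha \cdot \cot^3\alpha = \cot^2\alpha $). The elementary inequalities $ \sin\alpha^* \le \alpha^* $, $ \int_{\alpha^*}^{\pi/2}\cot^2\alpha\,\mathrm{d}\alpha \le \cot\alpha^* \le 1/\alpha^* $, together with the crude bound $ D/\varepsilon \le K/4 $ for $ K := \gamma^*(\mathscr{A}\mid\rho)/\sigma + 4D/\varepsilon $ used to pull the common factor $ K $ out, reduce the error contribution to the shape $ K\bigl(a\alpha^* + b/\alpha^*\bigr) $ with $ a $ proportional to $ c_1(1+\kappa)/\varepsilon $ and $ b $ proportional to $ c_3\varepsilon $; AM-GM then yields a minimum of order $ K\sqrt{(1+\kappa)c_1 c_3} $, matching the target up to the absolute constant $ \sqrt{2} $. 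The main obstacle is to shave off the factor of two introduced by the naive triangle inequality $ |\mathbb{E}\nabla^2 f(W') - \mathbb{E}\nabla^2 f(Y')|_\vee \le M_2(f)\bigl[\mathbb{P}(W' \in T) + \mathbb{P}(Y' \in T)\bigr] $; a sharper treatment of the signed integral $ \int_T \nabla^2 f\,\mathrm{d}(P_{W'} - P_{Y'}) $---exploiting that $ T $ is a difference of two sets for which \eqref{eq:OUDerNormApproxAss} applies individually rather than bounding $ W' $ and $ Y' $ separately---is what is needed to hit the precise coefficient $ \sqrt{2(1+\kappa)c_1 c_3} $.
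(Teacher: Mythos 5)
Your plan is essentially the paper's proof (split at a cutoff $\beta$, use \eqref{eq:f2phi1} with the support of $\nabla^2 f$ in the transition region for small $\alpha$, use \eqref{eq:f0phi3} plus the layer-cake/level-set argument for large $\alpha$, Lemma~\ref{lm:OUDerNorm} for the Gaussian main term, Lemma~\ref{lm:PerimNonStd} for the Gaussian mass of the transition region, and optimize $\beta$), but the gap you flag at the end is genuine and your proposed remedy does not close it. By subtracting the Gaussian over the \emph{whole} range of $\alpha$, in the small-$\alpha$ regime you are forced to bound $\mathbb{E}\nabla^2 f(W') - \mathbb{E}\nabla^2 f(Y')$, and any estimate through the transition region $T$ pays $\mathbb{P}(W'\in T)+\mathbb{P}(Y'\in T)$, i.e.\ a coefficient $2\gamma^*(\mathscr A\mid\rho)/\sigma$ instead of $\gamma^*(\mathscr A\mid\rho)/\sigma$. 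The ``sharper treatment of the signed integral'' you hope for is not available from the hypotheses: \eqref{eq:OUDerNormApproxAss} controls only sets of $\mathscr A$ (and their images under (A\ref{A:TrScale})/(A\ref{A:At})), whereas the level sets of $x\mapsto\langle\nabla^2 f(x),u^{\otimes2}\rangle$ are not of the form $A^{t\mid\rho}$, and a total-variation bound over arbitrary subsets of $T$ is not implied by $D$. The paper avoids the issue structurally: for $\alpha\le\beta$ it does \emph{not} compare with the Gaussian at all, but bounds $H_\alpha(u)=\langle\mathbb{E}\nabla^3\mathscr U_\alpha f(W),u^{\otimes3}\rangle$ directly via \eqref{eq:f2phi1}, using $|F_\alpha(z)|\le\frac{4(1+\kappa)}{\varepsilon^2}\,\mathbb{P}(W\cos\alpha+z\sin\alpha\in A_2\setminus A_1)\le\frac{4(1+\kappa)}{\varepsilon^2\cos\alpha}\bigl(\frac{\gamma^*(\mathscr A\mid\rho)\varepsilon}{\sigma}+2D\bigr)$, so only one $\gamma^*$ appears; the Gaussian main term is then needed only on $[\beta,\pi/2]$, and since $\int_\beta^{\pi/2}\frac{c_3\cos^2\alpha\sin\alpha}{2\sigma^3}\,\mathrm{d}\alpha\le\frac{c_3}{6\sigma^3}$ nothing is lost in the first term of \eqref{eq:OUDerNormApproxBd}.

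There is a second, independent loss in your final optimization: bounding $D/\varepsilon\le K/4$, pulling out the common factor $K=\gamma^*(\mathscr A\mid\rho)/\sigma+4D/\varepsilon$ and applying AM--GM gives at best $2\sqrt{(1+\kappa)c_1c_3}\,K$, which exceeds the target $\sqrt{2(1+\kappa)c_1c_3}\,K$ even if the factor-of-two on $\gamma^*$ were repaired. The paper instead keeps the three terms $\frac{4(1+\kappa)c_1\tan\beta}{\varepsilon}\frac{\gamma^*(\mathscr A\mid\rho)}{\sigma}$, $\frac{8(1+\kappa)c_1\tan\beta}{\varepsilon^2}D$ and $c_3D\cot\beta$ separate and chooses $\beta=\arctan\bigl(\varepsilon\sqrt{c_3/(8(1+\kappa)c_1)}\bigr)$ to equalize only the two $D$-terms (each becoming $\frac{2D}{\varepsilon}\sqrt{2(1+\kappa)c_1c_3}$); at this $\beta$ the $\gamma^*$-term comes out as exactly $\sqrt{2(1+\kappa)c_1c_3}\,\gamma^*(\mathscr A\mid\rho)/\sigma$. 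As written, your argument proves \eqref{eq:OUDerNormApproxBd} only with a larger absolute constant; with these two modifications it coincides with the paper's proof.
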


\begin{proof}
Fix $ A \in\mathscr A $ and $ \varepsilon> 0 $, and let $ f =
f_A^{\varepsilon} $ or $ f = f_A^{- \varepsilon} $.
In the first case, define $ A_1 := A $ and $ A_2 := A^{\varepsilon\mid
\rho} $, while in
the second case, define $ A_1 := A^{- \varepsilon\mid\rho} $ and $
A_2 := A $.

Similarly as observed in Remark~\ref{rk:ETensor},
$ \mathbb{E} ( \nabla^3 \mathscr U_\alpha f(W) ) $
is a tensor because it is an expectation of a random tensor. Since the latter
is symmetric, so is its expectation. By Proposition~\ref{pr:InjSym}, its
injective norm can be expressed as
\begin{equation}
\label{eq:OUDerNormApprox:InjSym} \bigl\llvert \mathbb{E} \bigl( \nabla^3 \mathscr
U_\alpha f(W) \bigr) \bigr\rrvert _\vee = \sup
_{|u| \le1} \bigl\llvert H_\alpha(u) \bigr\rrvert ,
\end{equation}
where
\begin{equation}
\label{eq:Halpha} H_\alpha(u) := \bigl\langle\mathbb{E} \bigl(
\nabla^3 \mathscr U_\alpha f(W) \bigr) ,
u^{\otimes3} \bigr\rangle = \mathbb{E} \bigl[ \bigl\langle
\nabla^3 \mathscr U_\alpha f(W) , u^{\otimes3} \bigr
\rangle \bigr] .
\end{equation}
Fix $ 0 < \beta< \pi/2 $ and $ u \in\mathbb{R}^d $ with $ |u| \le1 $.
We distinguish the cases $ 0 < \alpha\le\beta$ and $ \beta< \alpha
\le\pi/2 $.
In the first case, write, applying \eqref{eq:f2phi1},
\[
H_\alpha(u) = - \frac{\cos^3 \alpha}{\sin\alpha}
\int_{\mathbb{R}^d} F_\alpha (z) \bigl\langle\nabla
\phi_d(z) , u \bigr\rangle \,\mathrm{d}z ,
\]
where
\[
F_\alpha(z) := \mathbb{E} \bigl[ \bigl\langle\nabla^2 f(W
\cos\alpha+ z \sin\alpha) , u^{\otimes2} \bigr\rangle \bigr] .
\]
Notice that by Part~(\ref{SmoothdeltaM}) of Lemma~\ref
{lm:Smoothdelta} and Rademacher's theorem
(see Remark~\ref{rk:Rademacher}), $ \nabla^2 f $ is defined almost
everywhere. By Fubini's
theorem, the latter also holds for $ F_\alpha$.
Moreover, where it is defined, we have, by Parts~(\ref
{Smoothdelta01+}) and (\ref{SmoothdeltaM}) of
Lemma~\ref{lm:Smoothdelta},
\begin{equation}
\label{eq:OUOdvNormAproksFz} \bigl\llvert F_\alpha(z) \bigr\rrvert \le
\frac{4(1 + \kappa)}{\varepsilon^2} \mathbb{P} ( W \cos\alpha+ z \sin\alpha\in A_2
\setminus A_1 ) .
\end{equation}
First, we estimate the right-hand side with $ W $ replaced by a $ d
$-variate normal random vector
with the same mean and covariance matrix. Lemma~\ref{lm:PerimNonStd} yields
\begin{equation}
\label{eq:OUDerNormApproxFzNormal} \mathcal{N} \bigl( \mu\cos\alpha+ z \sin\alpha, \boldsymbol{
\Sigma} \cos^2 \alpha \bigr) \{ A_2 \setminus
A_1 \} \le\frac{\gamma^*(\mathscr A \mid\rho) \varepsilon}{\sigma\cos
\alpha} .
\end{equation}
To estimate the remainder, combine \eqref{eq:OUDerNormApproxAss},
(A\ref{A:TrScale}), (A\ref{A:At})
and the fact that $ A_1 \subseteq A_2 $, resulting in
\begin{equation}
\label{eq:OUDerNormApproxFzNonNormal} \bigl\llvert \mathbb{P} ( W \cos\alpha+ z \sin\alpha\in
A_2 \setminus A_1 ) - \mathcal{N} \bigl( \mu\cos\alpha+ z
\sin\alpha, \boldsymbol{\Sigma} \cos^2 \alpha \bigr) \{
A_2 \setminus A_1 \} \bigr\rrvert \le2 D .
\end{equation}
Combining \eqref{eq:OUOdvNormAproksFz}, \eqref
{eq:OUDerNormApproxFzNormal} and
\eqref{eq:OUDerNormApproxFzNonNormal}, we obtain
\[
\bigl\llvert F_\alpha(z) \bigr\rrvert \le\frac{4(1 + \kappa)}{\varepsilon^2} \biggl(
\frac{\gamma^*(\mathscr A \mid\rho) \varepsilon}{\sigma\cos
\alpha} + 2 D \biggr) \le\frac{4(1 + \kappa)}{\varepsilon^2 \cos\alpha} \biggl( \frac{\gamma^*(\mathscr A \mid\rho) \varepsilon}{\sigma}
+ 2 D \biggr) .
\]
From Lemma~\ref{lm:IntDerphidBd}, it follows that
\begin{equation}
\label{eq:OUDerNormApproxSmall} \bigl\llvert H_\alpha(u) \bigr\rrvert \le
\frac{4(1 + \kappa) c_1 \cos^2 \alpha}{\varepsilon\sin\alpha
} \biggl( \frac{\gamma^*(\mathscr A \mid\rho)}{\sigma} + \frac
{2 D}{\varepsilon} \biggr) .
\end{equation}
Now we turn to the case $ \alpha\ge\beta$, where we estimate
$ |H_\alpha(u)| $ in a different way. First, we estimate the
right-hand side of
\eqref{eq:Halpha} with $ W $ replaced by a $ d $-variate normal random vector
with the same mean and covariance matrix. Lemma~\ref{lm:OUDerNorm} yields
\begin{equation}
\label{eq:OUDerNormApproxGzNormal} \bigl\llvert \bigl\langle\mathcal{N}(\mu, \boldsymbol{\Sigma})
\bigl\{ \nabla^3 \mathscr U_\alpha f \bigr\} ,
u^{\otimes3} \bigr\rangle \bigr\rrvert \le\frac{c_3 \cos^3 \alpha}{2 \sigma^3} .
\end{equation}
To estimate the remainder, write, applying \eqref{eq:f0phi3},
\begin{equation}
\label{eq:OUDerNormApproxGzNonNormal} H_\alpha(u) - \bigl\langle\mathcal{N}(\mu, \boldsymbol{
\Sigma}) \bigl\{ \nabla^3 \mathscr U_\alpha f \bigr\} ,
u^{\otimes3} \bigr\rangle = - \cot^3 \alpha
\int_{\mathbb{R}^d} G_\alpha(z) \bigl\langle\nabla^3
\phi_d(z) , u^{\otimes3} \bigr\rangle \,\mathrm{d}z ,
\end{equation}
where
\[
G_\alpha(z) := \mathbb{E} \bigl[ f(W \cos\alpha+ z \sin\alpha) \bigr] -
\mathcal{N} \bigl( \mu\cos\alpha+ z \sin\alpha, \boldsymbol{\Sigma}
\cos^2 \alpha \bigr) \{ f \} .
\]
Noting that $ 0 \le f \le1 $, write $ f(x) = \int_0^1 \mathbf{1}(x
\in\tilde A_t) \,\mathrm{d}t $, where
$ \tilde A_t := \{ x ;f(x) \ge t \} $. Consequently,
\begin{align*}
G_\alpha(z) &=
\int_0^1 \bigl[ \mathbb{P} ( W \cos\alpha+ z \sin
\alpha\in\tilde A_t ) - \mathcal{N} \bigl( \mu\cos\alpha+ z \sin
\alpha, \boldsymbol{\Sigma} \cos^2 \alpha \bigr) \{ \tilde
A_t \} \bigr] \,\mathrm{d}t
\\
&=
\int_0^1 \bigl[ \mathbb{P}( W \in\tilde
A_{t, \alpha, z}) - \mathcal{N}(\mu, \boldsymbol{\Sigma})\{ \tilde
A_{t, \alpha, z} \} \bigr] \,\mathrm{d}t ,
\end{align*}
where $ \tilde A_{t, \alpha, z} := (\tilde A_t - \sin\alpha z) \cos
^{-1} \alpha$.
By Part~(\ref{SmoothdeltaInt}) of Lemma~\ref{lm:Smoothdelta}, $
\tilde A_t \in\mathscr A
\cup\{ \varnothing , \mathbb{R}^d \} $ for all $ t \in(0, 1) $. By
Assumption~(A\ref{A:TrScale}), the
same is true for $ \tilde A_{t, \alpha, z} $. Therefore, $ |G_\alpha
(z)| \le D $
(observe that \eqref{eq:OUDerNormApproxAss} is trivially true for $ A
\in\{ \varnothing ,
\mathbb{R}^d \} $). Applying \eqref{eq:OUDerNormApproxGzNormal},
\eqref{eq:OUDerNormApproxGzNonNormal}
and Lemma~\ref{lm:IntDerphidBd}, we obtain
\begin{equation}
\label{eq:OUDerNormApproxLarge} \bigl\llvert H_\alpha(u) \bigr\rrvert \le
\frac{c_3 \cos^3 \alpha}{2 \sigma^3} + c_3 D \cot^3 \alpha.
\end{equation}
Taking the supremum over $ u $ in \eqref{eq:OUDerNormApproxSmall} and
\eqref{eq:OUDerNormApproxLarge}, applying \eqref
{eq:OUDerNormApprox:InjSym} and integrating,
we obtain
\begin{equation}
\label{eq:OUOdvNormApproxBdbeta} %
\begin{aligned}[b]
\int_0^{\pi/2} \bigl\llvert \mathbb{E} \bigl[
\nabla^3 \mathscr U_\alpha f(W) \bigr] \bigr\rrvert
_\vee\tan\alpha \,\mathrm{d}\alpha \le{}&
\int_0^\beta\frac{4(1 + \kappa) c_1 \cos\alpha}{\varepsilon
} \biggl(
\frac{\gamma^*(\mathscr A \mid\rho)}{\sigma} + \frac{2
D}{\varepsilon} \biggr) \,\mathrm{d}\alpha
\\
&{} + c_3
\int_\beta^{\pi/2} \biggl( \frac{\cos^2 \alpha\sin\alpha
}{2 \sigma^3} + D
\cot^2 \alpha\ \biggr) \,\mathrm{d}\alpha
\\
\le{}&\frac{4(1 + \kappa) c_1 \tan\beta}{\varepsilon} \biggl( \frac{\gamma^*(\mathscr A \mid\rho)}{\sigma} + \frac{2
D}{\varepsilon} \biggr)
\\
& {}+ \frac{c_3}{6 \sigma^3} + c_3 D \cot\beta .
\end{aligned} %
\end{equation}
Now choose $ \beta$ so that the sum of the terms with $ D $ is
optimal. This occurs at
$ \beta= \arctan ( \varepsilon\times\sqrt{\frac{c_3}{8(1 + \kappa)
c_1}} ) $.
Plugging into \eqref{eq:OUOdvNormApproxBdbeta}, we obtain \eqref
{eq:OUDerNormApproxBd},
completing the proof.
\end{proof}

Now we are ready to prove the main result.

\begin{proof}[Proof of Theorem~\ref{th:ClassBd}]
First, we prove the case where $ \mathscr A $ also meets (A\ref{A:TrScale}$'$).
Throughout the argument, fix $ \mathscr A $ along with the underlying
functions $ \rho_A $.
For each $ \beta_0 > 0 $, define
\begin{equation}
\label{eq:ClassBdAffKeta} K(\beta_0) := \sup\frac{ \llvert \mathbb{P}(W \in A) - \mathcal
{N}(0, \mathbf{I}_d) \{ A \} \rrvert }%
{\max \{ \sum_{i \in\mathscr I} \mathbb{E} \llvert X_i \rrvert ^3, \beta_0
\}} ,
\end{equation}
where the supremum runs over the family of all sums $ W = \sum_{i \in
\mathscr I} X_i $ of
independent random vectors with $ \mathbb{E}X_i = 0 $ and $
\operatorname{Var}(W) = \mathbf{I}_d $, and over all
$ A \in\mathscr A $. Now fix $ \beta_0 > 0 $, a sum $ W = \sum_{i
\in\mathscr I} X_i $
in the aforementioned family and a set $ A \in\mathscr A $. From
Lemma~\ref{lm:Smoothdelta},
it follows that
\begin{align*}
0& \le \mathcal{N}(0, \mathbf{I}_d) \bigl\{ f_A^{\varepsilon\mid\rho}
\bigr\} - \mathcal{N}(0, \mathbf{I}_d) \{ A \} \le \mathcal{N}(0,
\mathbf{I}_d) \bigl\{ A^{\varepsilon\mid\rho} \bigr\} - \mathcal{N}(0,
\mathbf{I}_d) \{ A \} \le \gamma^*(\mathscr A \mid\rho) \varepsilon ,
\\
0 &\le \mathcal{N}(0, \mathbf{I}_d) \{ A \} - \mathcal{N}(0,
\mathbf{I}_d) \bigl\{ f_A^{- \varepsilon\mid\rho} \bigr\} \le
\mathcal{N}(0, \mathbf{I}_d) \{ A \} - \mathcal{N}(0,
\mathbf{I}_d) \bigl\{ A^{- \varepsilon\mid\rho} \bigr\} \le \gamma^*(\mathscr A
\mid\rho) \varepsilon .
\end{align*}
Consequently,
\begin{align*}
\mathbb{P}(W \in A) - \mathcal{N}(0, \mathbf{I}_d) \{ A \} &\le
\mathbb{E}f_A^\varepsilon(W) - \mathcal{N}(0,
\mathbf{I}_d) \bigl\{ f_A^\varepsilon\bigr\} +
\gamma^*(\mathscr A \mid\rho) \varepsilon ,
\\
\mathbb{P}(W \in A) - \mathcal{N}(0, \mathbf{I}_d) \{ A \} &\ge
\mathbb{E}f_A^{- \varepsilon}(W) - \mathcal{N}(0, \mathbf
{I}_d) \bigl\{ f_A^{- \varepsilon} \bigr\} - \gamma^*(
\mathscr A \mid\rho) \varepsilon .
\end{align*}
Therefore,
\begin{equation}
\label{eq:ClassBdSmoothPerturb} %
\begin{aligned}[b] \bigl\llvert \mathbb{P}(W \in A) -
\mathcal{N}(0, \mathbf{I}_d) \{ A \} \bigr\rrvert \le{}&\max \bigl\{
\bigl\llvert \mathbb{E}f(W) - \mathcal{N}(0, \mathbf {I}_d) \{ f \}
\bigr\rrvert ;f \in\bigl\{ f_A^\varepsilon, f_A^{- \varepsilon}
\bigr\} \bigr\}
\\
& {}+ \gamma^*(\mathscr A \mid\rho) \varepsilon \end{aligned}
\end{equation}
Let $ f \in\{ f_A^\varepsilon, f_A^{- \varepsilon} \} $, and
let $ \tilde X_i $ and $ \theta$ be as in
Lemma~\ref{lm:Stein}. Applying \eqref{eq:SteinSlep} and Lemma~\ref
{lm:Stein} in turn, and
conditioning on $ X_i, \tilde X_i $ and $ \theta$, we obtain
\[
\mathbb{E}f(W) - \mathcal{N}(0, \mathbf{I}_d) \{ f \} = -
\int_0^{\pi/2} \sum_{i \in\mathscr I}
\mathbb{E} \bigl[ \bigl\langle T_i(\alpha) , X_i
\otimes\tilde X_i^{\otimes2 } - (1 - \theta) X_i^{\otimes3}
\bigr\rangle \bigr] \tan\alpha \,\mathrm{d}\alpha ,
\]
where
\[
T_i(\alpha) := \mathbb{E} \bigl[ \nabla^3
\mathscr U_\alpha f(W_i + \theta X_i) |
X_i, \tilde X_i, \theta \bigr]
\]
is a random tensor of order three. Now estimate
\begin{equation}
\label{eq:BdTialpha} \bigl\llvert \mathbb{E}f(W) - \mathcal{N}(0,
\mathbf{I}_d) \{ f \} \bigr\rrvert \le\sum
_{i \in\mathscr I} \mathbb{E} \biggl[
\int_0^{\pi/2} \bigl\llvert T_i(\alpha)
\bigr\rrvert _\vee \tan\alpha \,\mathrm{d}\alpha \bigl( \llvert
X_i \rrvert \llvert \tilde X_i \rrvert ^2 +
(1 - \theta) \llvert X_i \rrvert ^3 \bigr) \biggr] .
\end{equation}
To estimate $ \int_0^{\pi/2} \llvert T_i(\alpha) \rrvert _\vee\tan
\alpha \,\mathrm{d}\alpha$, we shall
use the conditional counterpart of Lemma~\ref{lm:OUDerNormApprox}
given $ X_i $, $ \tilde X_i $
and $ \theta$. To apply it, we need to estimate
\[
D_{i,A} := \bigl\llvert \mathbb{P}(W_i + \theta
X_i \in A \mid X_i, \tilde X_i, \theta) -
\mathcal{N}(\theta X_i, \boldsymbol{\Sigma}_i) \{ A \}
\bigr\rrvert ,
\]
where $ \boldsymbol{\Sigma}_i = \operatorname{Var}(W_i) $.
Assume that $ \boldsymbol{\Sigma}_i $ is non-singular. In this case, we
may write
\[
D_{i,A} = \bigl\llvert \mathbb{P} \bigl( \boldsymbol{
\Sigma}_i^{-1/2} W_i \in \boldsymbol{
\Sigma}_i^{-1/2}(A - \theta X_i) \mid
X_i, \tilde X_i, \theta \bigr) - \mathcal{N}(0,
\mathbf{I}_d) \bigl\{ \boldsymbol{\Sigma}_i^{-1/2}
(A - \theta X_i) \bigr\} \bigr\rrvert .
\]
To estimate $ D_{i,A} $, we apply the `bootstrapping' argument: we
refer to
\eqref{eq:ClassBdAffKeta} with
\newline
$ \boldsymbol{\Sigma}_i^{-1/2} W_i $ in place of
$ W $, noting independence of $ W_i $ and $ (X_i, \tilde X_i, \theta)
$, and observing that
$ \boldsymbol{\Sigma}_i^{-1/2} W_i $ is a sum of
independent random vectors with vanishing expectations
and with
\newline
$ \operatorname{Var} ( \boldsymbol{\Sigma}_i^{-1/2} W_i ) =
\mathbf{I}_d $.
Furthermore, observe that, given $ \theta$ and $ X_i $, we have
$ \boldsymbol{\Sigma}_i^{-1/2} (A - \theta X_i) \in\mathscr A $ by
(A\ref{A:TrScale}$'$).
Denoting by $ \sigma_i^2 $ the smallest eigenvalue of $ \boldsymbol{\Sigma}_i $
(with $ \sigma_i > 0 $), observe that $ \mathbb{E} \llvert \boldsymbol{\Sigma}_i^{-1/2} X_j \rrvert ^3
\le\sigma_i^{-3} \mathbb{E}|X_j|^3 $ (notice that $ \sigma_i \le1
$). By \eqref{eq:ClassBdAffKeta},
we have
\[
D_{i,A} \le K(\beta_0) \max \biggl\{ \frac{1}{\sigma_i^3}
\sum_{j \in\mathscr I \setminus\{ i \}} \mathbb{E} \llvert X_j
\rrvert ^3, \beta_0 \biggr\} \le \frac{K(\beta_0) \bar\beta}{\sigma_i^3} ,
\]
where $ \bar\beta:= \max \{ \sum_{j \in\mathscr I} \mathbb
{E}|X_j|^3, \beta_0 \} $.
Applying Lemma~\ref{lm:OUDerNormApprox} to the conditional
distribution of $ W $
given $ X_i $, $ \tilde X_i $ and $ \theta$, we find that
\[
\int_0^{\pi/2} \bigl\llvert T_i(\alpha)
\bigr\rrvert _\vee\tan\alpha \,\mathrm{d}\alpha \le B_i :=
\frac{c_3}{6 \sigma_i^3} + \sqrt{2(1 + \kappa) c_1 c_3} \biggl(
\frac{\gamma^*(\mathscr A \mid
\rho)}{\sigma_i} + \frac{4 K(\beta_0)}{\sigma_i^3} \frac{\bar\beta}{\varepsilon} \biggr) .
\]
Now \eqref{eq:BdTialpha} reduces to
\begin{equation}
\label{eq:BdBXXX} \bigl\llvert \mathbb{E}f(W) - \mathcal{N}(0,
\mathbf{I}_d) \{ f \} \bigr\rrvert \le\sum
_{i \in\mathscr I} B_i \mathbb{E} \bigl( \llvert
X_i \rrvert \llvert \tilde X_i \rrvert ^2 +
(1 - \theta) \llvert X_i \rrvert ^3 \bigr) \le
\frac{3}{2} \sum_{i \in\mathscr I} B_i
\mathbb{E} \llvert X_i \rrvert ^3 ,
\end{equation}
with the last inequality being due to H\"older's inequality.

Now fix $ 0 < \beta_* < 1 $ (an explicit value will be chosen later)
and assume first that
$ \bar\beta\le\beta_* $. By Jensen's inequality,
$ \mathbb{E}|X_i|^2 \le ( \mathbb{E}|X_i|^3 )^{2/3} \le
\bar\beta^{2/3} \le\beta_*^{2/3} $
for all $ i \in\mathscr I $. Next, for each unit vector $ u \in
\mathbb{R}^d $,
\[
\langle\boldsymbol{\Sigma}_i u , u \rangle = u^T
\boldsymbol{\Sigma}_i u = u^T \bigl(\mathbf{I}_d
- \mathbb{E}X_i X_i^T\bigr) u = 1 -
\mathbb{E}\langle X_i , u \rangle^2 \ge1 - \mathbb
{E} \llvert X_i \rrvert ^2 \ge1 - \beta_*^{2/3}
.
\]
Therefore, $ \sigma_i^2 \ge1 - \beta_*^{2/3} $ for all $ i \in
\mathscr I $.
In particular, the matrices $ \boldsymbol{\Sigma}_i $ are non-singular and
the quantities $ B_i $ can be uniformly bounded. Letting
$ \sigma_* := ( 1 - \beta_*^{2/3} )^{1/2} $, \eqref
{eq:BdBXXX} reduces to
\begin{equation}
\label{eq:ClassBdAffSmooth} \bigl\llvert \mathbb{E}f(W) - \mathcal{N}(0,
\mathbf{I}_d) \{ f \} \bigr\rrvert \le \biggl[ \frac{c_3}{4 \sigma_*^3} +
\sqrt{2(1 + \kappa) c_1 c_3} \biggl( \frac{3 \gamma^*(\mathscr A \mid\rho)}{2 \sigma_*} +
\frac{6 K(\beta_0)}{\sigma_*^3} \frac{\bar\beta}{\varepsilon} \biggr) \biggr] \bar\beta .
\end{equation}
Recalling \eqref{eq:ClassBdSmoothPerturb}, we obtain
\begin{equation*}
\begin{split} \bigl\llvert \mathbb{P}(W \in A) - \mathcal{N}(0,
\mathbf{I}_d) \{ A \} \bigr\rrvert \le{}& \biggl[ \frac{c_3}{4 \sigma_*^3} +
\sqrt{2(1 + \kappa) c_1 c_3} \biggl( \frac{3 \gamma^*(\mathscr A \mid\rho)}{2 \sigma_*} +
\frac{6 K(\beta_0)}{\sigma_*^3} \frac{\bar\beta}{\varepsilon} \biggr) \biggr] \bar\beta
\\
& {} + \gamma^*(\mathscr A \mid\rho) \varepsilon . \end{split}
\end{equation*}
Choosing $ \varepsilon:= 12 \bar\beta\sqrt{2(1 + \kappa) c_1
c_3}/\sigma_*^3 $, this reduces to
\begin{align}
\label{eq:ClassBdAffbeta<gama}
&\bigl\llvert \mathbb{P}(W \in A) - \mathcal{N}(0,
\mathbf{I}_d) \{ A \} \bigr\rrvert\nonumber\\
&\quad \le \biggl[ \frac{K(\beta_0)}{2} +
\frac{c_3}{4 \sigma_*^3} + \gamma^*(\mathscr A \mid\rho) \sqrt{2(1 + \kappa)
c_1 c_3} \biggl( \frac{3}{2 \sigma_*} + \frac{12}{\sigma_*^3}
\biggr) \biggr] \bar\beta .
\end{align}
Now we are left with the case $ \bar\beta\ge\beta_* $. We trivially estimate
\begin{equation}
\label{eq:ClassBdAffbeta>gama} \bigl\llvert \mathbb{P}(W \in A) - \mathcal{N}(0,
\mathbf{I}_d) \{ A \} \bigr\rrvert \le1 \le\frac{\bar\beta}{\beta_*} .
\end{equation}
Dividing estimates \eqref{eq:ClassBdAffbeta<gama} and \eqref
{eq:ClassBdAffbeta>gama}
by $ \bar\beta$, taking the supremum over all $ A \in\mathscr A $
and all sums $ W $,
and plugging into \eqref{eq:ClassBdAffKeta}, we obtain
\[
K(\beta_0) \le\max \biggl\{ \frac{1}{\beta_*}, \frac{K(\beta_0)}{2} +
\frac{c_3}{4 \sigma_*^3} + \gamma^*(\mathscr A \mid\rho) \sqrt{2(1 + \kappa)
c_1 c_3} \biggl( \frac{3}{2 \sigma_*} + \frac{12}{\sigma_*^3}
\biggr) \biggr\} .
\]
Since $ K(\beta_0) \le1/\beta_0 < \infty$, it follows that
\begin{equation}
\label{eq:ClassBdAffKetaBd} K(\beta_0) \le\max \biggl\{ \frac{1}{\beta_*},
\frac{c_3}{2 \sigma_*^3} + \gamma^*(\mathscr A \mid\rho) \sqrt{2(1 + \kappa)
c_1 c_3} \biggl( \frac{3}{\sigma_*} + \frac{24}{\sigma_*^3}
\biggr) \biggr\} .
\end{equation}
Choose $ \beta_* := 1/27 $, which is approximately optimal for the
class of
all half-lines on the real line. Straightforward numerical estimation yields
$ K(\beta_0) \le\max \{ 27, 1 + 50 \gamma^*(\mathscr A \mid
\rho)
\sqrt{1 + \kappa} \} $;
%
%
%
this holds true for all $ \beta_0 > 0 $. Thus,
for a fixed sum $ W = \sum_{i \in\mathscr I} X_i $, one can plug the preceding
estimate into \eqref{eq:ClassBdAffKeta}, choosing $ \beta_0 := \sum_{i \in\mathscr I} \mathbb{E}|X_i|^3 $;
\eqref{eq:ClassBdAff} follows.

Now we turn to the case where $ \mathscr A $ does not necessarily meet
Assumption~(A\ref{A:TrScale}$'$).
This time, fix $ \kappa\ge0 $ and for each $ \beta_0, \gamma_0 > 0
$, define
\begin{equation}
\label{eq:ClassBdKeta} K(\beta_0, \gamma_0) := \sup
\frac{ \llvert \mathbb{P}(W \in A) -
\mathcal{N}(0, \mathbf{I}_d) \{ A \} \rrvert }%
{
\max \{ \sum_{i \in\mathscr I} \mathbb{E} \llvert X_i \rrvert ^3, \beta_0
\}
\max \{ \gamma^*(\mathscr A \mid\rho), \gamma_0 \}
} ,
\end{equation}
where the supremum runs over the family of all sums $ W = \sum_{i \in
\mathscr I} X_i $ of independent
random vectors with
$ \mathbb{E}X_i = 0 $ and $ \operatorname{Var}(W) = \mathbf{I}_d $,
all classes $ \mathscr A $ which, along with the underlying
functions $ \rho_A $, satisfy Assumptions~(A\ref{A:first})--(A\ref
{A:last}) (with the chosen $ \kappa$),
and all $ A \in\mathscr A $.

Now fix $ \beta_0, \gamma_0 > 0 $, a sum $ W = \sum_{i \in\mathscr
I} X_i $ in the aforementioned family,
a class $ \mathscr A $ along with functions $ \rho_A $ satisfying
Assumptions~(A\ref{A:first})--(A\ref{A:last}),
and a set $ A \in\mathscr A $.
We proceed as in the previous case up to the estimation of $ D_{i,A} $.
For the latter, we now refer to \eqref{eq:ClassBdKeta}, again with
$ \boldsymbol{\Sigma}_i^{-1/2} W_i $ in place of $ W $. However, the set
$ \boldsymbol{\Sigma}_i^{-1/2} (A - \theta X_i) $ might not be in $
\mathscr A $. Instead, it is in the
class $ \tilde{\mathscr A} := \{ \boldsymbol{\Sigma}_i^{-1/2} A'
;A' \in\mathscr A \} $.
Thus, we may take $ \boldsymbol{\Sigma}_i^{-1/2} (A - \theta X_i) $ in
place of $ A $
provided that we take $ \tilde{\mathscr A} $ in place of $ \mathscr A $.
By Lemma~\ref{lm:ClassTrans}, we may take the latter provided that we take
the underlying family of functions $ \tilde\rho_{\tilde A^\prime}(x) :=
\rho_{\boldsymbol{\Sigma}_i^{1/2} \tilde A^\prime}(\boldsymbol{\Sigma}_i^{1/2} x) $,
$ \tilde A^\prime\in\tilde{\mathscr A} $, in place of the family $
\rho_{A^\prime} $,
$ A^\prime\in\mathscr A $: in this case, $ \kappa$ stays the same.
Denoting by $ \sigma_i^2 $ the smallest eigenvalue of $ \boldsymbol{\Sigma}_i $
(with $ \sigma_i > 0 $), recall that $ \mathbb{E} \llvert \boldsymbol{\Sigma}_i^{-1/2} X_j \rrvert ^3
\le\sigma_i^{-3} \mathbb{E}|X_j|^3 $ and observe that, again by
Lemma~\ref{lm:ClassTrans},
$ \gamma^*(\tilde{\mathscr A} \mid\tilde\rho) \le\gamma
^*(\mathscr A \mid\rho)/\sigma_i $
(notice that $ \sigma_i \le1 $). By \eqref{eq:ClassBdKeta}, we have
\[
D_{i,A} \le K(\beta_0, \gamma_0) \max \biggl\{
\frac{1}{\sigma_i^3} \sum_{j \in\mathscr I \setminus\{ i \}} \mathbb{E} \llvert
X_j \rrvert ^3, \beta_0 \biggr\} \max
\biggl\{ \frac{\gamma^*(\mathscr A \mid\rho)}{\sigma_i}, \gamma_0 \biggr\} \le
\frac{K(\beta_0, \gamma_0) \bar\beta\bar\gamma}{\sigma_i^4} ,
\]
where $ \bar\beta:= \max \{ \sum_{j \in\mathscr I} \mathbb
{E}|X_j|^3, \beta_0 \} $
and $ \bar\gamma:= \max \{ \gamma^*(\mathscr A \mid\rho),
\gamma_0 \} $.
Applying Lemma~\ref{lm:OUDerNormApprox} to the conditional
distribution of $ W $
given $ X_i $, $ \tilde X_i $ and $ \theta$, we find that
\[
\int_0^{\pi/2} \bigl\llvert T_i(\alpha)
\bigr\rrvert _\vee\tan\alpha \,\mathrm{d}\alpha \le B_i :=
\frac{c_3}{6 \sigma_i^3} + \sqrt{2(1 + \kappa) c_1 c_3} \biggl(
\frac{\bar\gamma}{\sigma
_i^2} + \frac{4 K(\beta_0, \gamma_0)}{\sigma_i^4} \frac{\bar\beta\bar
\gamma}{\varepsilon} \biggr) .
\]
Again, fix $ 0 < \beta_* < 1 $, let $ \sigma_* := ( 1 - \beta
_*^{2/3} )^{1/2} $ and assume first that
$ \bar\beta\le\beta_* $. By the same argument as in the first part,
we derive
\[
\bigl\llvert \mathbb{P}(W \in A) - \mathcal{N}(0, \mathbf{I}_d) \{ A
\} \bigr\rrvert \le \biggl[ \frac{c_3}{4 \sigma_*^3} + \sqrt{2(1 + \kappa)
c_1 c_3} \biggl( \frac{3 \bar\gamma}{2 \sigma_*^2} + \frac{6 K(\beta_0, \gamma_0)}{\sigma_*^4}
\frac{\bar\beta\bar
\gamma}{\varepsilon} \biggr) \biggr] \bar\beta + \bar\gamma\varepsilon .
\]
Choosing $ \varepsilon:= 12 \bar\beta\sqrt{2 (1 + \kappa)c_1
c_3}/\sigma_*^4 $, this reduces to
\begin{equation}
\label{eq:ClassBdbeta<gama} %
\begin{aligned}[b]
&\bigl\llvert \mathbb{P}(W \in A) -
\mathcal{N}(0, \mathbf{I}_d) \{ A \} \bigr\rrvert \\
&\quad \le
\frac{c_3 \bar\beta}{4 \sigma_*^3} + \biggl[ \frac{K(\beta_0, \gamma_0)}{2} + \sqrt{2(1 + \kappa)
c_1 c_3} \biggl( \frac{3}{2 \sigma_*^2} + \frac{12}{\sigma_*^4}
\biggr) \biggr] \bar\beta\bar\gamma
\\
&\quad \le \biggl[ \frac{K(\beta_0, \gamma_0)}{2} + \frac{c_3}{4 \gamma_0 \sigma_*^3} + \sqrt{2(1 + \kappa)
c_1 c_3} \biggl( \frac{3}{2 \sigma_*^2} + \frac{12}{\sigma_*^4}
\biggr) \biggr] \bar\beta\bar\gamma . \end{aligned} %
\end{equation}
In the case $ \bar\beta\ge\beta_* $, we trivially estimate
\begin{equation}
\label{eq:ClassBdbeta>gama} \bigl\llvert \mathbb{P}(W \in A) - \mathcal{N}(0,
\mathbf{I}_d) \{ A \} \bigr\rrvert \le1 \le\frac{\bar\beta\bar\gamma}{\beta_* \gamma_0} .
\end{equation}
Divide the estimates \eqref{eq:ClassBdAffbeta<gama} and \eqref
{eq:ClassBdAffbeta>gama}
by $ \bar\beta\bar\gamma$ and take the supremum over all $ A \in
\mathscr A $, all sums $ W $,
and all families $ \mathscr A $ (along with functions $ \rho_A $).
Plugging into
\eqref{eq:ClassBdKeta}, we obtain
\[
K(\beta_0, \gamma_0) \le\max \biggl\{
\frac{1}{\beta_* \gamma_0}, \frac{K(\beta_0, \gamma_0)}{2} + \frac{c_3}{4 \sigma_*^3 \gamma_0} + \sqrt{2(1 + \kappa)
c_1 c_3} \biggl( \frac{3}{2 \sigma_*^2} + \frac{12}{\sigma_*^4}
\biggr) \biggr\} .
\]
Since $ K(\beta_0, \gamma_0) \le1/(\beta_0 \gamma_0) < \infty$,
it follows that
\begin{equation}
\label{eq:ClassBdKetaBd} K(\beta_0, \gamma_0) \le\max \biggl\{
\frac{1}{\beta_* \gamma_0}, \frac{c_3}{2 \sigma_*^3 \gamma_0} + \sqrt{2(1 + \kappa) c_1
c_3} \biggl( \frac{3}{\sigma_*^2} + \frac{24}{\sigma_*^4} \biggr) \biggr\}
.
\end{equation}
As in the first case, choose $ \beta_* := 1/27 $. Straightforward numerical
estimation yields $ K(\beta_0, \gamma_0) \le\max \{ 27/\gamma
_0, 1/\gamma_0 + 53
\sqrt{1 + \kappa} \} $;
%
%
%
this holds true for all $ \beta_0, \gamma_0 > 0 $.
Thus, for a fixed sum $ W = \sum_{i \in\mathscr I} X_i $ and a fixed
class $ \mathscr A $
along with functions $ \rho_A $, one can plug the preceding estimate
into \eqref{eq:ClassBdKeta},
choosing $ \beta_0 := \sum_{i \in\mathscr I} \mathbb{E}|X_i|^3 $ and
$ \gamma_0 := \gamma^*(\mathscr A \mid\rho) $; \eqref{eq:ClassBd} follows.
This completes the proof.
\end{proof}

\section{Derivation of the bound on the Gaussian perimeter of convex sets}
\label{sc:Perim}

In this section, we prove Theorem~\ref{th:PerimBd}, and also state and prove
Proposition~\ref{pr:PerimAnn}, which is a generalization of
Proposition~\ref{pr:PerimAnnNorm}.
Throughout this section, fix $ d \in\mathbb{N}$ and denote by $
\mathscr C_d $ the class of
all measurable convex sets in $ \mathbb{R}^d $. From Section~\ref
{sc:Intr}, recall the
definitions of $ \delta_A $ and $ A^t $ for a set $ A \subseteq
\mathbb{R}^d $.
Recall also that $ \mathscr H^r $ denotes the $ r $-dimensional
Hausdorff measure.

The first result of the section is closely related to Lemma~11 of
Livshyts~\cite{Liv14}.

\begin{proposition}
\label{pr:PerimAnn}
Let $ \mathscr A $ be a class of certain convex sets in $ \mathbb{R}^d $.
Suppose that $ A^t \in\mathscr A \cup\{ \varnothing \} $ for all $ A
\in\mathscr A $ and all
$ t \in\mathbb{R}$. Take a continuous function
$ f \colon\mathbb{R}^d \to[0, \infty) $, which is integrable with
respect to the
Lebesgue measure. Then we have $ \gamma_f(\mathscr A) = \gamma
_f^*(\mathscr A) $, where
\begin{align*}
\gamma_f(\mathscr A) &= \sup \biggl\{
\int_{\partial A} f(x) \mathscr H^{d-1}(\mathrm{d}x) ;A \in
\mathscr A \biggr\} ,
\\
\gamma_f^*(\mathscr A) &= \sup \biggl\{ \frac{1}{\varepsilon}
\int_{A^\varepsilon\setminus A} f(x)\, \mathrm {d}x, \frac{1}{\varepsilon}
\int_{A \setminus A^{- \varepsilon}} f(x) \,\mathrm{d}x ; \varepsilon> 0, A \in
\mathscr A \biggr\} .
\end{align*}
\end{proposition}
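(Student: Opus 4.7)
The plan is to derive both inequalities $\gamma_f^*(\mathscr{A}) \le \gamma_f(\mathscr{A})$ and $\gamma_f(\mathscr{A}) \le \gamma_f^*(\mathscr{A})$ from a single coarea identity. For a convex set $A$, the signed distance function $\delta_A$ is $1$-Lipschitz on $\mathbb{R}^d$ and satisfies $\llvert \nabla \delta_A \rrvert = 1$ almost everywhere (outside $A$ this is standard for any closed set; inside a convex body it follows from the almost-everywhere uniqueness of the nearest boundary point). Since $A^t = \{ \delta_A \le t \}$ and $\partial A^t$ coincides $\mathscr{H}^{d-1}$-a.e.\ with the level set $\{ \delta_A = t \}$, the coarea formula yields, for every $\varepsilon > 0$,
\[
\int_{A^\varepsilon \setminus A} f(x) \, dx = \int_0^\varepsilon \int_{\partial A^t} f \, d\mathscr{H}^{d-1} \, dt , \quad \int_{A \setminus A^{-\varepsilon}} f(x) \, dx = \int_{-\varepsilon}^0 \int_{\partial A^t} f \, d\mathscr{H}^{d-1} \, dt .
\]

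For the bound $\gamma_f^*(\mathscr{A}) \le \gamma_f(\mathscr{A})$ I would use the hypothesis $A^t \in \mathscr{A} \cup \{ \varnothing \}$ to control each inner integral by $\gamma_f(\mathscr{A})$ (with a vanishing contribution when $A^t = \varnothing$). Substituting into the two identities, dividing by $\varepsilon$, and taking the supremum over $\varepsilon > 0$ and $A \in \mathscr{A}$ gives the desired inequality.

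For the reverse inequality I would fix $A \in \mathscr{A}$ and pass to the limit $\varepsilon \to 0^+$ in the outer identity. Since $\frac{1}{\varepsilon} \int_{A^\varepsilon \setminus A} f \, dx \le \gamma_f^*(\mathscr{A})$ for every $\varepsilon > 0$, it suffices to show
\[
\liminf_{\varepsilon \to 0^+} \frac{1}{\varepsilon} \int_0^\varepsilon \int_{\partial A^t} f \, d\mathscr{H}^{d-1} \, dt \ge \int_{\partial A} f \, d\mathscr{H}^{d-1} ,
\]
and then take the supremum over $A \in \mathscr{A}$.

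The main obstacle is this last step, that is, the right lower semicontinuity at $t = 0$ of the map $t \mapsto \int_{\partial A^t} f \, d\mathscr{H}^{d-1}$ along outer parallel bodies of a convex set. The route I would take is through Minkowski--Steiner theory: for a convex $A$ the family $(A^t)_{t \ge 0}$ converges to $\overline{A}$ in Hausdorff distance as $t \downarrow 0$, and the surface measures $\mathscr{H}^{d-1}\rrvert_{\partial A^t}$ converge weakly to $\mathscr{H}^{d-1}\rrvert_{\partial A}$. Combining this convergence with continuity and nonnegativity of $f$ through a standard portmanteau/Fatou argument yields the required lower semicontinuity, completing the proof.
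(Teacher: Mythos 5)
Your first inequality, $\gamma_f^*(\mathscr A)\le\gamma_f(\mathscr A)$, is exactly the paper's argument: the coarea formula applied to $\delta_A$, with $J_1\delta_A=\llvert\nabla\delta_A\rrvert=1$ a.e.\ and $\partial A^t=\{\delta_A=t\}$, followed by the hypothesis $A^t\in\mathscr A\cup\{\varnothing\}$. For the reverse inequality you take a genuinely different route, and that is where the proposal is not yet a proof. The fact you invoke --- weak convergence of $\mathscr H^{d-1}$ on $\partial A^t$ to $\mathscr H^{d-1}$ on $\partial A$ as $t\downarrow0$ --- is standard (continuity of curvature measures) only for \emph{compact} convex bodies with nonempty interior. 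Here $\mathscr A$ may contain unbounded convex sets (half-spaces are indispensable for the application to $\gamma_d$), for which you need at least a local/vague formulation that is not off-the-shelf ``Minkowski--Steiner theory'', and $(d-1)$-dimensional convex sets, for which the statement as written is false: the boundary measures of the outer parallel bodies converge to \emph{twice} $\mathscr H^{d-1}$ on the relative interior of $\overline A$, not to $\mathscr H^{d-1}$ on $\partial A$ (harmless for your one-sided liminf, but it shows the cited fact must be reformulated). Moreover $f$ is only continuous and integrable, not bounded, so the portmanteau step must be the one-sided inequality for nonnegative lower semicontinuous integrands under vague convergence, which you only gesture at. In short, the crucial lower semicontinuity at $t=0^+$ is asserted in a generality in which it is not a quotable theorem.

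The paper closes this direction without any limit of surface measures: it applies the curvilinear Fubini theorem (Federer) to the nearest-point projection $p^\perp_A$ on $A^\varepsilon\setminus\overline A$, using that $p^\perp_A$ is non-expansive, so $J_{d-1}p^\perp_A\le1$ there, and that the fiber over each $y\in\partial A$ contains an outer normal segment of length $\varepsilon$; this gives $\int_{A^\varepsilon\setminus A}f\,\mathrm{d}x\ \ge\ \varepsilon\int_{\partial A}f^{-\varepsilon}\,\mathrm{d}\mathscr H^{d-1}$ with $f^{-\varepsilon}(y):=\inf_{\llvert v\rrvert\le\varepsilon}f(y+v)$, and letting $\varepsilon\downarrow0$ (continuity of $f$ plus monotone/dominated convergence) yields $\gamma_f(\mathscr A)\le\gamma_f^*(\mathscr A)$. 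If you prefer to keep your level-set route, the same device repairs your gap with no convex-geometry citation: $p^\perp_A\colon\partial A^t\to\partial A$ is a $1$-Lipschitz surjection, so the pushforward of $\mathscr H^{d-1}$ on $\partial A^t$ dominates $\mathscr H^{d-1}$ on $\partial A$, whence $\int_{\partial A^t}f\,\mathrm{d}\mathscr H^{d-1}\ge\int_{\partial A}f^{-t}\,\mathrm{d}\mathscr H^{d-1}$, and Fatou as $t\downarrow0$ gives exactly the liminf you need, uniformly valid for unbounded and degenerate $A$.
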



Before proving the preceding assertion, we need to introduce some
notation and auxiliary
results. For a map $ g \colon A \to\mathbb{R}^n $, where $ A
\subseteq\mathbb{R}^d $ is a measurable
set, and for a point $ x \in A $ where $ g $ is differentiable, denote
by $ \mathbf{D}g(x) $ its
derivative (i.e., Jacobian matrix) at $ x $. For each $ r = 0, 1, 2,
\ldots$\,, define
$ J_r g(x) $, the $ r $-dimensional absolute Jacobian, as follows: if $
\operatorname{rank}\mathbf{D}g(x) < r $,
set $ J_r g(x) := 0 $. If $ \operatorname{rank}\mathbf{D}g(x) > r $,
set $ J_r g(x) := \infty$.
Finally, if $ \operatorname{rank}\mathbf{D}g(x) = r $, define $ J_r
g(x) $ to be the product of $ r $
non-zero singular values in the singular-value decomposition of $
\mathbf{D}g(x) $,
that is, $ \mathbf{D}g(x) = \mathbf{U} \boldsymbol{\Sigma}\mathbf{V}
$, where
$ \mathbf{U} $ and $ \mathbf{V} $ are orthogonal matrices and where $
\boldsymbol{\Sigma}$
is a diagonal rectangular matrix with non-negative diagonal elements
referred to as singular
values. It is easy to see that the definition is independent of the
decomposition.
Notice that for $ n = 1 $, we have $ J_1 g(x) = |\nabla g(x)| $.

The main tool used in the proof of Proposition~\ref{pr:PerimAnn} will
be the
following assertion, which can be regarded as a curvilinear variant of
Fubini's theorem.
As a special case, it also includes the change of variables formula in
the multi-dimensional
integral.

\begin{proposition}[Federer~\cite{Fed}, Corollary~3.2.32]
\label{pr:curviFubini}
Let $ A \subseteq\mathbb{R}^d $ be a measurable set, $ f \colon A \to
\mathbb{R}$
a measurable function and $ g \colon A \to\mathbb{R}^n $ a locally
Lipschitzian map.
Take $ 0 \le r \le d $ and assume that $ f J_r g $ is integrable with respect
to the Lebesgue measure. Then $ f |_{g^{-1}(\{ y \})} $
is $ \mathscr H^{d-r} $-integrable
for almost all $ y $ with respect to $ \mathscr H^r $, the function
$ y \mapsto\int_{g^{-1}( \{ y \} )} f \,\mathrm{d}\mathscr H^{d-r} $
is measurable and
\[
\int_{ \{ x \in A ;\mathscr H^{d-r}(g^{-1}( \{ g(x) \} )) > 0 \} } f(x) J_r g(x) \,\mathrm{d}x =
\int_{\mathbb{R}^n}
\int_{g^{-1}( \{ y \} )} f \,\mathrm{d}\mathscr H^{d-r} \mathscr
H^r(\mathrm{d}y) .
\]
\end{proposition}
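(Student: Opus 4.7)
The plan is to apply Proposition~\ref{pr:curviFubini} (Federer's curvilinear Fubini) to the signed distance function $g = \delta_A$ of a convex set $A \in \mathscr A$. For convex $A$, $\delta_A$ is $1$-Lipschitz and $|\nabla \delta_A(x)| = 1$ for Lebesgue-a.e.\ $x$ (the exceptional set is contained in the medial axis of $A$ together with $\partial A$), and $\delta_A^{-1}(\{t\}) = \partial A^t$ whenever $A^t$ is a convex set with non-empty interior. Writing $P(t) := \int_{\partial A^t} f\,\mathrm{d}\mathscr H^{d-1}$ (with $P(t) := 0$ when $A^t = \varnothing$), Proposition~\ref{pr:curviFubini} with $r = 1$ yields the key identity
\[
\int_{A^{b} \setminus A^{a}} f(x)\,\mathrm{d}x \;=\; \int_a^b P(t)\,\mathrm{d}t, \qquad a < b,
\]
which rewrites the two quantities under the supremum defining $\gamma_f^*(\mathscr A)$ (for $[a,b] = [0,\varepsilon]$ and $[a,b] = [-\varepsilon, 0]$) as averages of $P$.

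The inequality $\gamma_f^*(\mathscr A) \le \gamma_f(\mathscr A)$ is then immediate: by hypothesis $A^t \in \mathscr A \cup \{\varnothing\}$, so $P(t) \le \gamma_f(\mathscr A)$ for every $t$, and averaging over an interval preserves this bound.

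For the reverse inequality, fix $A \in \mathscr A$; it suffices to show $P(0) \le \gamma_f^*(\mathscr A)$. Since $\gamma_f^*(\mathscr A) \ge \frac{1}{\varepsilon}\int_0^\varepsilon P(t)\,\mathrm{d}t$ for every $\varepsilon > 0$, it is enough to establish the right-side lower semi-continuity $\liminf_{t \to 0^+} P(t) \ge P(0)$; an elementary averaging step then gives $\liminf_{\varepsilon \to 0^+} \frac{1}{\varepsilon}\int_0^\varepsilon P(t)\,\mathrm{d}t \ge P(0)$. The mechanism is the nearest-point projection $\pi_t \colon \partial A^t \to \partial A$, which for convex $A$ is well defined (the nearest point on $\overline A$ is unique), $1$-Lipschitz, and surjective (for $x \in \partial A$ and any outward unit normal $\nu$ at $x$, the point $x + t\nu$ lies in $\partial A^t$ and projects back to $x$). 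Decomposing $f(y) = f(\pi_t(y)) + [f(y) - f(\pi_t(y))]$ and invoking the area formula for $1$-Lipschitz surjections gives
\[
\int_{\partial A^t} f(\pi_t(y))\,\mathrm{d}\mathscr H^{d-1}(y) \;\ge\; \int_{\partial A} f(x)\,\mathrm{d}\mathscr H^{d-1}(x) = P(0),
\]
so $P(t) \ge P(0) - \int_{\partial A^t} |f(y) - f(\pi_t(y))|\,\mathrm{d}\mathscr H^{d-1}(y)$.

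The main obstacle is controlling this correction term uniformly as $t \to 0^+$. Since $|y - \pi_t(y)| = t$, continuity of $f$ handles the integrand pointwise, but one needs joint control: a uniform modulus of continuity for $f$ together with finiteness of $\mathscr H^{d-1}(\partial A^t)$ on the relevant region. For bounded $A$ this is routine via the Steiner estimate $\mathscr H^{d-1}(\partial A^t) \to \mathscr H^{d-1}(\partial A) < \infty$; for unbounded $A$ one truncates by a large ball, using integrability of $f$ together with the coarea identity once more to show that the contribution from the complement is uniformly small in $t$. After this truncation the correction tends to zero, completing the proof.
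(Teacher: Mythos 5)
Your proposal does not prove the statement at issue. The statement is Federer's Corollary~3.2.32 itself -- the curvilinear Fubini (coarea-type) identity relating $\int f\, J_r g\,\mathrm{d}x$ to the iterated integral over the fibres $g^{-1}(\{y\})$ with respect to $\mathscr H^{d-r}$ and $\mathscr H^{r}$. In the paper this proposition is quoted as an external result from Federer's \emph{Geometric Measure Theory}, and proving it would require the machinery behind the area/coarea formulas for Lipschitz maps (Lipschitz extension and $C^1$ approximation, stratification of $A$ by the rank of $\mathbf{D}g$, measurability of the fibre integrals, and so on), none of which appears in your text. What you actually do is take Proposition~\ref{pr:curviFubini} as a black box and use it to prove a different result, namely Proposition~\ref{pr:PerimAnn} (that $\gamma_f(\mathscr A)=\gamma_f^*(\mathscr A)$). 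Relative to the assigned statement this is circular: the displayed identity is assumed throughout, never established, so there is a complete gap.

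As a secondary point, even read as an argument for Proposition~\ref{pr:PerimAnn}, your route differs from the paper's and has a soft spot. For the inequality $\gamma_f(\mathscr A)\le\gamma_f^*(\mathscr A)$ you try to prove right lower semicontinuity of $t\mapsto\int_{\partial A^t} f\,\mathrm{d}\mathscr H^{d-1}$ at $t=0$, which forces you to invoke a uniform modulus of continuity of $f$ and finiteness of $\mathscr H^{d-1}(\partial A^t)$; for an unbounded convex $A$ and a merely continuous, integrable $f$ the boundary measure can be infinite and $f$ need not be uniformly continuous, so the truncation step is only sketched, not carried out. The paper avoids this entirely: it applies Proposition~\ref{pr:curviFubini} with $r=d-1$ to the orthogonal projection $p^\perp_A$, uses that $J_{d-1}p^\perp_A\le 1$ outside $\overline A$ and that each fibre over $y\in\partial A$ contains a segment of length $\varepsilon$ in the normal direction, deduces $\int_{\partial A} f^{-\varepsilon}\,\mathrm{d}\mathscr H^{d-1}\le\gamma_f^*(\mathscr A)$ with $f^{-\varepsilon}(x)=\inf_{|v|\le\varepsilon}f(x+v)$, and concludes by continuity of $f$ and dominated convergence as $\varepsilon\downarrow 0$.
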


\begin{remark}
The integrand in the left-hand side is defined for almost all $ x \in A
$, because
$ g $ is almost everywhere differentiable by Rademacher's theorem.
\end{remark}

\begin{corollary}[Coarea Formula]
\label{co:coarea}
Let $ d $, $ n $, $ A $, $ f $ and $ g $ be as in the preceding
statement. Suppose that $ d \ge n $.
Then we have
\[
\int_A f(x) J_n g(x) \,\mathrm{d}x =
\int_{\mathbb{R}^n}
\int_{g^{-1}( \{ y \} )} f \,\mathrm{d}\mathscr H^{d-n} \,\mathrm{d}y .
\]
\end{corollary}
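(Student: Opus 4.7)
The plan is to specialize Proposition~\ref{pr:curviFubini} to $r = n$ and then close the two remaining gaps between its conclusion and the coarea formula. With $r = n$, the proposition reads
\[
\int_{B} f(x) J_n g(x) \,\mathrm{d}x = \int_{\mathbb{R}^n} \int_{g^{-1}(\{y\})} f \,\mathrm{d}\mathscr{H}^{d-n} \,\mathscr{H}^n(\mathrm{d}y),
\]
where $B := \{x \in A : \mathscr{H}^{d-n}(g^{-1}(\{g(x)\})) > 0\}$. To recover the stated identity I have to (a) replace $\mathscr{H}^n$ on $\mathbb{R}^n$ by Lebesgue measure in the outer integral, and (b) extend the left-hand domain of integration from $B$ to all of $A$.

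Point (a) is classical: under the standard normalisation of the Hausdorff measure, $\mathscr{H}^n$ agrees with Lebesgue measure on $\mathbb{R}^n$ (Federer~\cite{Fed}, Section~2.10). Point (b) reduces to the claim that $J_n g = 0$ Lebesgue-almost everywhere on $A \setminus B$, for then the integrand $f(x) J_n g(x)$ contributes nothing over $A \setminus B$. Since $g$ is locally Lipschitz, Rademacher's theorem guarantees that $g$ is differentiable at Lebesgue-a.e.\ point of $A$, so it suffices to show that every differentiability point $x_0 \in A$ with $J_n g(x_0) > 0$ actually belongs to $B$.

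Fix such a point $x_0$. The condition $J_n g(x_0) > 0$ is equivalent to $\mathbf{D}g(x_0)$ having full rank $n$. After an orthogonal change of coordinates one may split $x = (x', x'') \in \mathbb{R}^n \times \mathbb{R}^{d-n}$ so that the restriction of $\mathbf{D}g(x_0)$ to the $x'$-directions is non-singular. The Lipschitz implicit function theorem (a Lipschitz analogue of the classical IFT, in the spirit of Federer~\cite{Fed}, Section~3.1) then furnishes a neighbourhood $V$ of $x_0''$ in $\mathbb{R}^{d-n}$ and a Lipschitz map $\psi \colon V \to \mathbb{R}^n$ whose graph $\{(\psi(x''), x'') : x'' \in V\}$ is contained in $g^{-1}(\{g(x_0)\})$. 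Being a $(d-n)$-dimensional Lipschitz graph, this set has positive $\mathscr{H}^{d-n}$-measure, so $x_0 \in B$, as required.

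The main obstacle is this final step: inferring positivity of $\mathscr{H}^{d-n}(g^{-1}(\{g(x_0)\}))$ from $J_n g(x_0) > 0$. Because $g$ is only locally Lipschitz rather than $C^1$, the usual smooth implicit function theorem does not apply directly, and one must appeal to its Lipschitz counterpart in Federer~\cite{Fed}; everything else is bookkeeping. Once that step is granted, combining (a), (b), and Proposition~\ref{pr:curviFubini} yields the coarea formula.
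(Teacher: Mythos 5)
Your overall route is the same as the paper's: specialize Proposition~\ref{pr:curviFubini} to $r=n$, identify $\mathscr H^{n}$ with Lebesgue measure on $\mathbb R^{n}$, and drop the restriction to $\{x\in A:\mathscr H^{d-n}(g^{-1}(\{g(x)\}))>0\}$ by arguing that at (almost) every differentiability point with $J_n g(x)>0$ the fibre through $x$ has positive $\mathscr H^{d-n}$-measure; the paper's proof is exactly this, attributing the last implication to ``the implicit function theorem''. The gap is in how you justify that implication. You invoke a ``Lipschitz implicit function theorem'' at the single point $x_0$ and conclude that the fibre contains a Lipschitz graph $\{(\psi(x''),x''):x''\in V\}$ over a full neighbourhood $V$. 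No such theorem applies under your hypotheses: $g$ is merely locally Lipschitz and is assumed differentiable only at $x_0$, whereas Clarke's Lipschitz implicit function theorem requires the whole generalized Jacobian at $x_0$ to be of maximal rank, not just $\mathbf D g(x_0)$. Worse, the intermediate claim itself can fail: take $d=2$, $n=1$, $\phi(y):=|y|^{3/2}\sin(1/y)$ with $\phi(0):=0$, and let $g$ be the signed distance to $\{(x,y):x\le\phi(y)\}$. Then $g$ is $1$-Lipschitz and $|g(x,y)-x|=O\bigl((|x|+|y|)^{3/2}\bigr)$, so $g$ is differentiable at the origin with $\nabla g(0)=e_1$ and $J_1 g(0)=1$; but the fibre $g^{-1}(\{0\})$ is the graph of the non-Lipschitz function $\phi$, so (with your natural splitting, $x'$ along $\nabla g(0)$) any $\psi$ whose graph over a neighbourhood of $0$ lies in the fibre must coincide with $\phi$ there and cannot be Lipschitz. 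Note the fibre still has positive $\mathscr H^{1}$-measure, so the statement is not contradicted; it is only your mechanism that breaks.

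The fact you actually need is weaker and true, and can be proved without any implicit function theorem. Choose the splitting $x=(x',x'')$ so that the $n\times n$ block $\mathbf A$ of $\mathbf D g(x_0)$ is invertible, and let $c>0$ be its smallest singular value. Differentiability at $x_0$ gives a uniform $o(\rho)$ remainder on the ball of radius $\rho$, so for small $\rho$ the map $x'\mapsto g(x',x_0'')$ stays at distance at least $c\rho/2$ from the value $g(x_0)$ on the sphere $|x'-x_0'|=\rho$ and has nonzero topological degree there (it is homotopic to the affine map $x'\mapsto g(x_0)+\mathbf A(x'-x_0')$ without hitting the value on the boundary). Since $|g(x',x'')-g(x',x_0'')|\le L|x''-x_0''|$, homotopy invariance keeps the degree nonzero for all $x''$ with $|x''-x_0''|<c\rho/(4L)$; hence the fibre meets every slice $\{x''\}\times B_\rho(x_0')$ for $x''$ in a neighbourhood $V$ of $x_0''$. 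Its orthogonal projection onto the $x''$-plane therefore contains $V$, and since that projection is $1$-Lipschitz, $\mathscr H^{d-n}\bigl(g^{-1}(\{g(x_0)\})\bigr)\ge\mathscr H^{d-n}(V)>0$ (the case $d=n$ is trivial, $\mathscr H^{0}$ being counting measure). With this substitution, your remaining bookkeeping --- Rademacher's theorem, the vanishing of the integrand where $J_n g=0$, and $\mathscr H^{n}=$ Lebesgue measure on $\mathbb R^{n}$ --- is fine and the argument coincides with the paper's.
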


\begin{proof}
Apply Proposition~\ref{pr:curviFubini} with $ d = n $ and observe that
by the implicit function theorem, $ J_n g(x) > 0 $ implies $ \mathscr
H^{d-n}(g^{-1}( \{ g(x) \} )) > 0 $.
\end{proof}

Now we turn to some simple properties of convex sets. First, one can
easily check that
if $ C $ is a non-empty convex set and $ x \in\mathbb{R}^d $, there
exists a unique point
in $ \overline{C} $ which is closest to $ x $.

\begin{definition}
The \emph{orthogonal projection} to a non-empty convex set $ C $ is a map
$ p^\perp_C \colon\mathbb{R}^d \to\overline{C} $, where $ p^\perp
_C(x) $ is defined to be
the unique point in $ \overline{C} $ which is closest to $ x $.
\end{definition}
\begin{proposition}
\label{pr:ConvEl}
Let $ C $ be a convex set, which is neither the empty set nor the whole
$ \mathbb{R}^d $.
\begin{enumerate}[$(1)$]
\item\label{ConvEl:Incrdistf} For each $ x \in\mathbb{R}^d $ and
each $ \varepsilon> 0 $, there
exists $ y \in\mathbb{R}^d $ with $ 0 < \delta_C(y) - \delta_C(x) =\break
|y - x| < \varepsilon$.
\item\label{ConvEl:Diff} $ \delta_C $ is almost everywhere differentiable.
\item\label{ConvEl:Diff1} For each $ x $ where $ \delta_C $ is
differentiable, we have
$ |\nabla\delta_C(x)| = 1 $.
%
\item\label{ConvEl:bdCt} For each $ t \in\mathbb{R}$, we have $
\partial C^t = \{ x ;\delta_C(x) = t \} $.
\end{enumerate}
\end{proposition}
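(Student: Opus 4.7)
The plan is to build the four parts on top of one another, with (1) as the foundation. From (1) together with the 1-Lipschitz property of $\delta_C$ I will read off (3); the same Lipschitz property gives (2) through Rademacher's theorem; and (4) will follow from continuity of $\delta_C$ together with (1).

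For (1) I intend to split according to the sign of $\delta_C(x)$. If $\delta_C(x) > 0$ (so $x \notin \overline{C}$), set $\vec v := (x - p^\perp_C(x))/|x-p^\perp_C(x)|$ and take $y = x + s\vec v$ for small $s > 0$; convexity of $\overline{C}$ implies that $p^\perp_C(y) = p^\perp_C(x)$, so $\delta_C(y) = \delta_C(x) + s$. If $\delta_C(x) = 0$ (so $x \in \partial C$), take $\vec v$ to be an outward unit normal to a supporting hyperplane of $\overline{C}$ at $x$; again $p^\perp_C(x + s\vec v) = x$ for small $s$, so $\delta_C(x + s\vec v) = s$. If $\delta_C(x) < 0$ (so $x \in \mathrm{int}(C)$; this is where $C \ne \mathbb R^d$ is used), pick a closest point $z \in \partial C$, set $\vec v := (z-x)/|z-x|$, and verify by a triangle-inequality argument that $\operatorname{dist}(x + s\vec v, \mathbb R^d \setminus C) = -\delta_C(x) - s$ for $s \in (0, -\delta_C(x))$. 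In each case taking $s < \varepsilon$ furnishes the desired $y$.

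For (2) I will first check that $\delta_C$ is 1-Lipschitz. For $x,y$ both in $C$ or both in $\mathbb{R}^d \setminus C$ this is immediate from the 1-Lipschitz property of $\operatorname{dist}(\cdot, \cdot)$; for $x \in C$ and $y \notin C$ I pick a point $p$ on the segment $[x,y]$ that lies in $\partial C$ and estimate $\delta_C(y) - \delta_C(x) \le |y-p| + |p-x| = |y-x|$. Rademacher's theorem then delivers almost-everywhere differentiability. For (3), at any differentiability point $x$ the vector $\vec v$ from (1) satisfies $\langle\nabla\delta_C(x), \vec v\rangle = 1$, while the Lipschitz bound forces $|\nabla\delta_C(x)| \le 1$; Cauchy--Schwarz sandwiches $|\nabla\delta_C(x)|$ between $1$ and $1$. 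For (4), continuity of $\delta_C$ makes $C^t$ closed, and $\{\delta_C < t\}$ open; thus $\partial C^t \subseteq \{\delta_C = t\}$. Conversely, if $\delta_C(x) = t$, (1) yields arbitrarily close points $y$ with $\delta_C(y) > t$, i.e.\ $y \notin C^t$, so $x \in \partial C^t$.

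The main obstacle is the interior case of (1): proving the identity $\operatorname{dist}(x + s\vec v, \mathbb R^d \setminus C) = -\delta_C(x) - s$ requires some care to rule out a closer exterior point via the triangle inequality, and one has to be pedantic about the distinction between $\mathbb R^d \setminus C$ and its closure (since $C$ is neither assumed open nor closed), using that $\operatorname{dist}(\cdot, \mathbb R^d \setminus C) = \operatorname{dist}(\cdot, \mathbb R^d \setminus \mathrm{int}(C))$ so that the closest exterior point may be chosen in $\partial C$. The remaining cases of (1) are standard facts about orthogonal projection onto closed convex sets.
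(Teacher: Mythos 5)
Your proposal is correct and follows essentially the same route as the paper: the same three-case construction for part (1) (moving outward along $x-p^\perp_C(x)$ outside $\overline C$, along an outer normal on $\partial C$, and toward a nearest exterior point inside), then non-expansiveness plus Rademacher for (2), the combination of the Lipschitz bound with the unit-rate direction from (1) for (3), and continuity together with (1) for (4). No substantive differences worth noting.
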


\begin{proof}
If $ x \in\operatorname{Int}C $, there exists a point $ z \in\mathbb
{R}^d \setminus\operatorname{Int}C $ which is
closest to $ x $. For all $ y = (1 - \tau) x + \tau z $,
where $ 0 \le\tau\le1 $, we have $ \operatorname{dist}(y, \mathbb{R}^d \setminus C) =
\operatorname{dist}(x, \mathbb{R}^d \setminus C) - |x - y| $, that is,
$ - \delta_C(y) = - \delta_C(x) - |x - y| $. Next, if $ x \in\mathbb
{R}^d \setminus\overline{C} $,
take $ \tau \ge 0 $ and let $ y = (1 + \tau) x - \tau p^\perp_C(x) $. By convexity, we have
$ \langle w - p^\perp_C(x) , x - p^\perp_C(x) \rangle \le0 $
for all $ w \in C $. As a result,
$ \operatorname{dist}(y, C) = \operatorname{dist}(x, C) + |x - y| $
for all $ \tau\ge0 $. Finally, if $ x \in\partial C $,
it is well known that there exist a unit outer normal vector $ u $
(possibly more than one); then,
for all $ y = x + \tau u $, where $ \tau\ge0 $, we again have
$ \operatorname{dist}(y, C) = \operatorname{dist}(x, C) + |x - y| $.
This proves (\ref{ConvEl:Incrdistf}).

One can easily check that $ \delta_C $ is non-expansive. By
Rademacher's theorem
(see also Remark~\ref{rk:Rademacher}), it is almost everywhere
differentiable and
$ |\nabla\delta_C(x)| \le1 $ for all $ x $ where it is
differentiable. This proves
(\ref{ConvEl:Diff}). However, by (\ref{ConvEl:Incrdistf}), we have
$ |\nabla\delta_C(x)| \ge1 $. This proves (\ref{ConvEl:Diff1}).


From the continuity of $ \delta_C $, it follows that $ \partial C^t
\subseteq\{ x ;\delta_C(x) = t \} $.
The opposite follows from (\ref{ConvEl:Incrdistf}). This proves (\ref
{ConvEl:bdCt}).
\end{proof}

\begin{proof}[Proof of Proposition~\ref{pr:PerimAnn}]
Without loss of generality, we may assume that $ \varnothing $ and $
\mathbb{R}^d $ are not elements of
$ \mathscr A $. Take $ A \in\mathscr A $. By the Coarea formula, we have
\[
\int_{A^\varepsilon\setminus A} f(x) J_1 \delta_A(x)
\,\mathrm{d}x =
\int_0^\varepsilon
\int_{\delta_A^{-1}(\{ t \})} f(x) \mathscr H^{d-1}(\mathrm{d}x)
\,\mathrm{d}t .
\]
Applying Parts~(\ref{ConvEl:Diff1}) and (\ref{ConvEl:bdCt}) of
Proposition~\ref{pr:ConvEl}, this
reduces to
\[
\int_{A^\varepsilon\setminus A} f(x) \,\mathrm{d}x =
\int_0^\varepsilon
\int_{\partial A^t} f(x) \mathscr H^{d-1}(\mathrm {d}x)
\,\mathrm{d}t \le \varepsilon \gamma_f(\mathscr A) .
\]
Similarly, we obtain
\[
\int_{A \setminus A^{- \varepsilon}} f(x) \,\mathrm{d}x =
\int_{- \varepsilon}^0
\int_{\partial A^t} f(x) \mathscr H^{d-1}(\mathrm{d}x)
\,\mathrm{d}t \le \varepsilon \gamma_f(\mathscr A)
\]
(remember that $ A^t \in\mathscr A \cup\{ \varnothing \} $; for $ A
= \varnothing $, the inner
integral vanishes).
Dividing by $ \varepsilon$, and taking the supremum over $ \varepsilon
$ and $ A $, we obtain $ \gamma_f^*(\mathscr A)
\le\gamma_f(\mathscr A) $.

To prove the opposite inequality, observe first that, by Parts~(\ref{ConvEl:Diff}) and (\ref{ConvEl:Diff1}) of Proposition~\ref{pr:ConvEl}, $ p^\perp_A $ is
non-expansive.
Next, observe that $ p^\perp_A ( (1 + \tau) x - \tau p^\perp
_A(x) ) = p^\perp_A(x) $
for all $ x \in\mathbb{R}^d \setminus\overline{A} $ and all $ \tau
\ge0 $. Therefore,
if $ p^\perp_A $ is differentiable at $ x \in\mathbb{R}^d \setminus
\overline{A} $, we have
$ \operatorname{rank}\mathbf{D}p^\perp_A(x) \le d - 1 $ and,
moreover, $ J_{d-1} p^\perp_A(x) \le1 $.
By Proposition~\ref{pr:curviFubini}, we have
\begin{align*}
\int_{A^\varepsilon\setminus A} f(x) \,\mathrm{d}x &\ge
\int_{\{ x \in A^\varepsilon\setminus\overline A ;\mathscr
H^1((p^\perp_A)^{-1}(\{ p^\perp_A(x) \})) > 0 \}} f(x) J_{d-1} p^\perp_A(x)
\,\mathrm{d}x
\\
&=
\int_{\partial A}
\int_{(p^\perp_A)^{-1}(\{ y \} \cap(A^\varepsilon
\setminus\overline A))} f \,\mathrm{d}\mathscr H^1 \mathscr
H^{d-1}(\mathrm{d}y) .
\end{align*}
If $ u $ is a unit outer normal vector at $ y \in\partial C $, then $
p^\perp_A(y + \tau u) = y $
for all $ \tau\ge0 $. Moreover, $ y + \tau u \in(p^\perp_A)^{-1}(\{
y \}) \cap(A^\varepsilon\setminus\overline A) $
for all $ 0 < \tau\le\varepsilon$. Therefore,
$ \mathscr H^1 ( (p^\perp_A)^{-1}(\{ y \}) \cap(A^\varepsilon
\setminus\overline A) )
\ge\varepsilon$. As a result,
\[
\int_{A^\varepsilon\setminus A} f(x) \,\mathrm{d}x \ge \varepsilon
\int_{\partial A} f^{-\varepsilon}(y) \mathscr H^{d-1}(
\,\mathrm{d}y) ,
\]
where $ f^{- \varepsilon}(x) := \inf_{|v| \le\varepsilon} f(x+v) $.
Dividing by $ \varepsilon$, we obtain
\[
\int_{\partial A} f^{-\varepsilon}(y) \mathscr H^{d-1}(
\,\mathrm{d}y) \le\gamma_f^*(\mathscr A) .
\]
Since $ f $ is continuous, we have $ \lim_{\varepsilon\downarrow0}
f^{- \varepsilon}(x) = f(x) $
for all $ x \in\mathbb{R}^d $. Applying the dominated convergence
theorem and taking
the supremum over all $ A $, we obtain $ \gamma_f(\mathscr A) \le
\gamma_f^*(\mathscr A) $.
This completes the proof.
\end{proof}

The orthogonal projection will be one of two key maps used in the proof
of Theorem~\ref{th:PerimBd}.
The other one will be the \emph{radial projection}.

\begin{definition}
Let $ C $ be a convex set with $ 0 \in\operatorname{Int}C $. We
define the \emph{radial function} of $ C $
to be the map $ \rho_C \colon\mathbb{R}^d \setminus\{ 0 \} \to(0,
\infty] $ defined by
\[
\rho_C(x) := \sup \biggl\{ r > 0 ;r \frac{x}{ \llvert x \rrvert } \in C \biggr
\} = \inf \biggl\{ r > 0 ;r \frac{x}{ \llvert x \rrvert } \notin C \biggr\}
\]
and the \emph{radial projection} of $ C $ to be the map
$ p^\rho_C \colon\{ x \in\mathbb{R}^d \setminus\{ 0 \} ;\rho_C(x)
< \infty\} \to\partial C $ defined by
$ \displaystyle p^\rho_C(x) := \rho_C(x) \frac{x}{|x|} $.
\end{definition}
\begin{lemma}
\label{lm:rad}
Let $ C $ be as before. Define the set $ D := \{ x \in\mathbb{R}^d
\setminus\{ 0 \} ;\rho_C(x) < \infty\} $.
Then:
\begin{enumerate}[$(1)$]
\item\label{rad:Lip} $ D $ is open and $ \rho_C $ and $ p^\rho_C $
are locally Lipschitzian on $ D $.
\item\label{rad:J} If $ \rho_C $ is differentiable at $ x $, so is $
p^\rho_C $,
there is a unique outer unit normal vector at $ p^\rho_C(x) $ and we have
\[
J_{d-1} p^\rho_C(x) = \biggl( \frac{\rho_C(x)}{|x|}
\biggr)^{d-1} \frac{1}{\cos\theta} ,
\]
where $ \theta$ is the angle between $ x $ and the outer unit normal
vector at
$ p^\rho_C(x) $.
\end{enumerate}
\end{lemma}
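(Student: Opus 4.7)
The plan is to realize both the radial function and the projection via the Minkowski gauge $ \|x\|_C := \inf\{r > 0 : x/r \in C\} $: one checks directly that $ \rho_C(x) = |x|/\|x\|_C $ and $ p^\rho_C(x) = x/\|x\|_C $ on $ D $. Since $ 0 \in \operatorname{Int} C $, there is $ r_0 > 0 $ with $ B(0, r_0) \subseteq C $, giving $ \|x\|_C \le |x|/r_0 $; combined with sublinearity of $ \|\cdot\|_C $ (a consequence of convexity of $ C $) this yields the global Lipschitz bound $ |\|x\|_C - \|y\|_C| \le |x-y|/r_0 $. Hence $ D = \{x \ne 0 : \|x\|_C > 0\} $ is open, and on any compact subset of $ D $ the gauge is bounded below by a positive constant, so $ 1/\|\cdot\|_C $, and therefore $ \rho_C $ and $ p^\rho_C $, are locally Lipschitz on $ D $; this settles~(\ref{rad:Lip}).

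For~(\ref{rad:J}), differentiability of $ p^\rho_C $ at $ x $ is immediate from $ p^\rho_C(x) = \rho_C(x) \cdot x/|x| $ together with smoothness of $ x \mapsto x/|x| $ on $ D $. Two homogeneity relations then drive the Jacobian computation: $ \rho_C(\lambda x) = \rho_C(x) $ for $ \lambda > 0 $ yields $ \langle \nabla \rho_C(x), x\rangle = 0 $ by Euler's identity, and $ p^\rho_C(\lambda x) = p^\rho_C(x) $ yields $ \mathbf{D} p^\rho_C(x) \, x = 0 $, so the rank drops to at most $ d-1 $. Taking an orthonormal basis $ \{e_1, \ldots, e_{d-1}\} $ of $ x^\perp $ and expanding in the orthonormal basis $ \{x/|x|, e_1, \ldots, e_{d-1}\} $, the restricted matrix $ A = \mathbf{D} p^\rho_C(x)|_{x^\perp} $ decomposes as a rank-one ``first row'' with entries $ g_i := \langle \nabla \rho_C(x), e_i\rangle $ stacked on top of $ (\rho_C(x)/|x|) \mathbf{I}_{d-1} $; the matrix-determinant lemma applied to $ A^\top A = gg^\top + (\rho_C(x)/|x|)^2 \mathbf{I}_{d-1} $ (and using $ |g| = |\nabla \rho_C(x)| $ since $ \nabla \rho_C(x) \in x^\perp $) then gives
\[
J_{d-1} p^\rho_C(x) = \left(\frac{\rho_C(x)}{|x|}\right)^{d-1} \sqrt{1 + \frac{|x|^2 \, |\nabla \rho_C(x)|^2}{\rho_C(x)^2}}.
\]

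To identify the square-root factor as $ 1/\cos\theta $ and to produce the unique outer unit normal, I would differentiate $ \|y\|_C = |y|/\rho_C(y) $ at $ y = p^\rho_C(x) $, exploiting that $ \nabla \rho_C $ is positively homogeneous of degree $ -1 $; a short calculation yields $ \nabla \|\cdot\|_C(y) = \rho_C(x)^{-1} \bigl( x/|x| - (|x|/\rho_C(x)) \nabla \rho_C(x) \bigr) $. Because $ \|\cdot\|_C $ is convex and differentiable at $ y $, this gradient is the unique supporting functional to $ \overline{C} = \{z : \|z\|_C \le 1\} $ at $ y \in \partial C $, and after normalization it is the unique outer unit normal. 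Since its components along $ x/|x| $ and along $ \nabla \rho_C(x) $ (which are orthogonal) have magnitudes $ 1/\rho_C(x) $ and $ |x| |\nabla \rho_C(x)|/\rho_C(x)^2 $ respectively, one reads off $ \cos\theta = 1/\sqrt{1 + |x|^2 |\nabla \rho_C(x)|^2/\rho_C(x)^2} $, matching the Jacobian formula exactly. The main obstacle I anticipate is the bookkeeping of the Jacobian: one must isolate the vanishing radial direction, verify the rank-one-plus-scaled-identity block structure of $ A $ in the right basis, and reconcile the resulting square-root factor with the geometric quantity $ 1/\cos\theta $ via the auxiliary gauge-differentiation.
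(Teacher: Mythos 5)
Your proposal is correct and arrives at exactly the right formula, but it follows a genuinely different route in several places. For part~(\ref{rad:Lip}) you exploit the Minkowski gauge $\|x\|_C$, its sublinearity and the global Lipschitz bound $|\,\|x\|_C-\|y\|_C\,|\le |x-y|/r_0$, via the identity $\rho_C(x)=|x|/\|x\|_C$; the paper instead proves local Lipschitzness by a hands-on two-parameter sandwich estimate for $\rho_C(x+\sigma v+\tau w)/\rho_C(x)$ obtained directly from convexity. Your gauge argument is cleaner and buys the openness of $D$ and continuity in one stroke. For part~(\ref{rad:J}) the shared core is the chain-rule formula for $\mathbf{D}p^\rho_C(x)$ and the homogeneity relations $\langle\nabla\rho_C(x),x\rangle=0$, $\mathbf{D}p^\rho_C(x)x=0$; after that you compute $J_{d-1}$ at $x$ itself via the Gram matrix $A^\top A=gg^\top+(\rho_C(x)/|x|)^2\mathbf{I}_{d-1}$ and the matrix-determinant lemma, and you identify the normal and $\cos\theta$ by differentiating the gauge and invoking convex analysis (the normal cone to the sublevel set $\{\|\cdot\|_C\le 1\}$ at $y$ is the ray spanned by $\nabla\|\cdot\|_C(y)$, which is a singleton direction by differentiability; the Slater point is the origin). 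The paper instead rescales to the boundary point $y=p^\rho_C(x)$, where $\mathbf{D}p^\rho_C(y)$ is a projector, identifies its image with the orthogonal complement of $y-|y|\nabla\rho_C(y)$ to get uniqueness of the normal, and reads off $J_{d-1}p^\rho_C(y)=1/\cos\theta$ from an explicit singular-value decomposition, then scales back by $(\rho_C(x)/|x|)^{d-1}$. Two minor points you should make explicit in a write-up: (i) since $A^\top A\succeq(\rho_C(x)/|x|)^2\mathbf{I}_{d-1}\succ 0$ and the kernel of $\mathbf{D}p^\rho_C(x)$ is exactly $\operatorname{span}(x)$, the nonzero singular values of the full derivative are those of the restriction $A$, so $J_{d-1}p^\rho_C(x)=\sqrt{\det(A^\top A)}$; (ii) the convex-analysis fact you use for uniqueness of the outer normal deserves a citation or the one-line subgradient argument (the reverse inclusion, that every outer normal is a multiple of the gradient, is the nontrivial direction — the paper gets it by noting that outer normals annihilate the image of the derivative). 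Neither is a gap, just bookkeeping.
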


\begin{proof}
Since $ 0 \in\operatorname{Int}C $, there exists $ r_0 > 0 $, such
that $ \{ y \in\mathbb{R}^d ;|y| < r_0 \}
\subseteq C $. Fix $ x \in\mathbb{R}^d \setminus\{ 0 \} $. Let $ r_1
:= \rho_C(x) $ and $ v := x/|x| $.
Take $ w \perp v $ and $ s, t \in\mathbb{R}$, and let $ z := s v + t
w $. By convexity, $ z \in C $ if
$ 0 \le s < r_1 ( 1 - \frac{|t|}{r_0} ) $, and
$ z \notin C $ if $ s > r_1 ( 1 + \frac{|t|}{r_0} ) $.
Consequently,
\[
\frac{|z|}{\frac{s}{r_1} + \frac{|t|}{r_0}} \le\rho_C(z) \le\frac
{|z|}{\frac{s}{r_1} - \frac{|t|}{r_0}} ,
\]
provided that $ s > 0 $ and $ |t| < s r_0/r_1 $. Letting $ s = |x| +
\sigma$ and $ t = \tau$, we obtain
\[
\frac{\sqrt{ ( 1 + \frac{\sigma}{|x|} )^2 + (
\frac{\tau}{|x|} )^2}}%
{1 + \frac{\sigma}{|x|} + \frac{\rho_C(x)}{r_0} \frac{|\tau|}{|x|}} \le \frac{\rho_C(x + \sigma v + \tau w)}{\rho_C(x)} \le \frac{\sqrt{ ( 1 + \frac{\sigma}{|x|} )^2 + (
\frac{\tau}{|x|} )^2}}%
{1 + \frac{\sigma}{|x|} - \frac{\rho_C(x)}{r_0} \frac{|\tau|}{|x|}} ,
\]
provided that $ \sigma> - |x| $ and $ |\tau| < (|x| + \sigma)
r_0/\rho_C(x) $. From the
preceding inequality, we deduce first that $ D $ is open, then that $
\rho_C $ is continuous on
$ D $, then that $ \rho_C $ is locally Lipschitzian on $ D $ and
finally that the latter also holds
for $ p^\rho_C $. This proves (\ref{rad:Lip}).

Now suppose that $ \rho_C $ is differentiable at $ x $. By the chain
rule, so is
$ p^\rho_C $ and straightforward computation yields
\begin{equation}
\label{eq:Diffradp} \mathbf{D}p^\rho_C(x) v = \bigl\langle
\nabla\rho_C(x) , v \bigr\rangle \frac{x}{|x|} +
\rho_C(x) \biggl( \frac{v}{|x|} - \frac{\langle x , v \rangle
x}{|x|^3} \biggr) .
\end{equation}
Observe that since $ p^\rho_C(kx) = p^\rho_C(x) $ for all $ k > 0 $,
we have,
by the chain rule, $ \mathbf{D}p^\rho_C(kx) = \frac{1}{k} \mathbf
{D}p^\rho_C(x) $.
Thus, letting $ y := p^\rho_C(x) = \frac{\rho_C(x)}{|x|} x $, we have
$ \mathbf{D}p^\rho_C(x) = \frac{\rho_C(x)}{|x|} \mathbf{D}p^\rho
_C(y) $. Taking
$ y $ in place of $ x $ in \eqref{eq:Diffradp} and noting that $ \rho
_C(y) = |y| $,
we obtain
\[
\mathbf{D}p^\rho_C(y) v = v - \bigl\langle y - \llvert y
\rrvert \nabla\rho_C(y) , v \bigr\rangle \frac{y}{|y|^2} .
\]
Differentiating $ \rho_C(ky) = \rho_C(y) $ with respect to $ k $, we obtain
$ \langle\nabla\rho_C(y) , y \rangle = 0 $. Making use of
this identity, we find after
some calculation that $ \mathbf{D}p^\rho_C(y) $ is a projector.

If $ u $ is a unit outer normal vector at $ y $, then $ u $ is perpendicular
to the image of $ \mathbf{D}p^\rho_C(y) $. However, since $ \mathbf
{D}p^\rho_C(y) $ is
a projector, its image is the same as the set of its fixed points,
which are
precisely the vectors perpendicular to $ y - |y| \nabla\rho_C(y) $. Therefore,
$ u $ must be parallel to $ y - |y| \nabla\rho_C(y) $. Since $
\langle u , y \rangle > 0 $
and since $ \langle\nabla\rho_C(y) , y \rangle = 0 $,
we have $ u = \frac{y - |y| \nabla\rho_C(y)}{|y| \sqrt{1 + |\nabla
\rho_C(y)|^2}} $.
Thus, there is indeed a unique unit outer normal vector. Taking the
inner product
with $ y $, we find that $ |\nabla\rho_C(y)| = \tan\theta$.

Without loss of generality, we may assume that $ y/|y| $ is the first
base vector
and that $ \nabla\rho_C(y)/|\nabla\rho_C(y)| $ is the second one,
the latter
provided that $ \nabla\rho_C(y) \ne0 $. This way, we have
\[
\mathbf{D}p^\rho_C(y) = %
\begin{bmatrix} 0 &
\tan\theta&
\\
0 & 1 &
\\
& & \mathbf{I}_{d-2} \end{bmatrix} %
= %
\begin{bmatrix} \cos\theta& \sin\theta&
\\
- \sin\theta& \cos\theta&
\\
& & \mathbf{I}_{d-2} \end{bmatrix} %
\begin{bmatrix} 0 & 0 &
\\
0 & 1/\cos\theta&
\\
& & \mathbf{I}_{d-2} \end{bmatrix} %
\mathbf{I}_d
.
\]
The latter singular-value decomposition yields $ J_{d-1} p^\rho_C(y) =
1/\cos\theta$.
Recalling $ \mathbf{D}p^\rho_C(x) = \frac{\rho_C(x)}{|x|} \mathbf
{D}p^\rho_C(y) $, we obtain
(\ref{rad:J}).
\end{proof}

Before finally turning to the proof of Theorem~\ref{th:PerimBd}, we
still need some
inequalities regarding elementary and special functions. The first one
regards the
\emph{Mills ratio}:
\begin{equation}
\label{eq:Mills} R(x) := e^{x^2/2}
\int_x^\infty e^{- z^2/2} \,\mathrm{d}z =
\int _0^\infty e^{-tx - t^2/2} \,\mathrm{d}t .
\end{equation}
For $ y > 0 $, define
\begin{equation}
\label{eq:infMills} I(y) := \inf_{x \ge0} \bigl( x y + R(x) \bigr)
\end{equation}
and observe that $ I(y) > 0 $ and that $ I $ is strictly increasing.

\begin{lemma}
\label{lm:infMillsBd}
For all $ 0 < y < 1 $, the function $ I $ satisfies $ I(y) \ge2 \sqrt
{y(1 - y)} $.
\end{lemma}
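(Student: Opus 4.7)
The plan is to locate the minimizer of $f(x) := xy + R(x)$ via calculus, simplify $I(y)$ at that minimizer using the first-order condition, and finish with a one-line AM--GM application.

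From the second representation in \eqref{eq:Mills}, $R(x) = \int_0^\infty e^{-xt}\,e^{-t^2/2}\,\mathrm{d}t$ is the Laplace transform of a positive function, hence completely monotone; in particular $R$ is convex. Differentiating under the integral sign yields the standard identity $R'(x) = xR(x) - 1$, so $f$ is convex with $f'(x) = y - 1 + xR(x)$. Since $xR(x) \to 0$ as $x \to 0^+$, we have $f'(0^+) = y - 1 < 0$; and since $xR(x) \to 1$ as $x \to \infty$, $f'(x) \to y > 0$. By convexity and continuity, $f$ attains its global infimum at a unique $x^* > 0$ satisfying
\[
x^* R(x^*) = 1 - y, \qquad \text{equivalently} \qquad R(x^*) = \frac{1-y}{x^*}.
\]

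Substituting at the minimizer gives
\[
I(y) \;=\; x^* y + R(x^*) \;=\; x^* y + \frac{1-y}{x^*},
\]
and the AM--GM inequality immediately produces
\[
I(y) \;\ge\; 2\sqrt{\,x^* y \cdot \frac{1-y}{x^*}\,} \;=\; 2\sqrt{y(1-y)} .
\]

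The only non-routine step is the first one: verifying that the infimum in \eqref{eq:infMills} is attained at an interior critical point where the first-order condition applies. This is handled by combining convexity of $R$ (via the Laplace-transform representation) with the sign analysis of $f'$ at the two endpoints; once that is in place the algebraic identity and AM--GM take over with essentially no work.
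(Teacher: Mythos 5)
Your proof is correct, but it takes a genuinely different route from the paper's. The paper invokes the classical Mills-ratio lower bound $R(x) \ge \frac{2}{x + \sqrt{x^2+4}}$ (Formula~7.1.13 of Abramowitz and Stegun) and then computes $\inf_{x \ge 0} \bigl( x y + \frac{2}{x + \sqrt{x^2+4}} \bigr)$ in closed form, obtaining $2\sqrt{y(1-y)}$ for $y \le 1/2$ and the slightly stronger value $1$ for $y \ge 1/2$. You instead work with $R$ itself: strict convexity of $R$ from the Laplace representation in \eqref{eq:Mills}, the standard identity $R'(x) = xR(x) - 1$, and the limits $xR(x) \to 0$ as $x \to 0^+$ and $xR(x) \to 1$ as $x \to \infty$, which give a strictly increasing $f'$ changing sign, hence an interior minimizer $x^*$ with $x^* R(x^*) = 1 - y$; then $I(y) = x^* y + (1-y)/x^*$ and AM--GM finishes. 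All of these steps check out (the minimizer beats the endpoint $x=0$ since $f'(0^+) < 0$, so the infimum over $x \ge 0$ is indeed $f(x^*)$). What each approach buys: yours is self-contained, needing no external inequality, and in fact expresses $I(y)$ exactly in terms of the critical point; the paper's is shorter modulo the cited bound and yields the marginally stronger conclusion $I(y) \ge 1$ for $y \ge 1/2$, which, however, is not needed for the lemma or its use in the proof of Theorem~\ref{th:PerimBd}.
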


\begin{proof}
By Formula~7.1.13 of Abramowitz and Stegun~\cite{AbS}, we have
$ R(x) \ge\frac{2}{x + \sqrt{x^2 + 4}} $
for all $ x \ge0 $. A straightforward calculation shows that the expression
$ \inf_{x \ge0} ( x y + \frac{2}{x + \sqrt{x^2 + 4}} ) $
equals $ 2 \sqrt{y(1 - y)} $ for $ y \le1/2 $ and $ 1 $ for $ y \ge
1/2 $.
\end{proof}

\begin{lemma}
\label{lm:xalphaexp}
For all $ 0 \le x < \alpha$, we have
\begin{align}
\label{eq:xalphaexp-} \biggl( 1 - \frac{x}\alpha \biggr)^{- \alpha^2}
e^{- \alpha x} &\ge e^{x^2/2} ,
\\
\label{eq:xalphaexp+} \biggl( 1 - \frac{x}\alpha \biggr)^{\alpha^2 - 1}
e^{\alpha x} &\ge e^{- x^2/2} \biggl( 1 - \frac{x^3}{\alpha} \biggr) .
\end{align}
\end{lemma}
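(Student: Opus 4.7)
The plan is to take logarithms in both inequalities and rewrite each difference as an integral whose integrand is manifestly non-negative.

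For inequality \eqref{eq:xalphaexp-}, the substitution $t = s/\alpha$ in $-\log(1-u) = \int_0^u dt/(1-t)$ yields $-\log(1 - x/\alpha) = \int_0^x ds/(\alpha - s)$, while $\alpha x + x^2/2 = \int_0^x (\alpha + s)\,ds$. Consequently the log-form inequality $-\alpha^2 \log(1 - x/\alpha) - \alpha x - x^2/2 \ge 0$ becomes
\[
\int_0^x \biggl[ \frac{\alpha^2}{\alpha - s} - (\alpha + s) \biggr] ds \;=\; \int_0^x \frac{s^2}{\alpha - s}\,ds \;\ge\; 0,
\]
which is immediate since $\alpha - s > 0$ on the interval of integration.

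For inequality \eqref{eq:xalphaexp+}, I would first dispose of the trivial case $x^3 \ge \alpha$: here the right-hand side is non-positive while the left-hand side is positive. Assume henceforth $x^3 < \alpha$, so that both sides are positive and the logarithms are well defined. Using additionally $-\log(1 - x^3/\alpha) = \int_0^x 3 s^2/(\alpha - s^3)\,ds$ (via $u = s^3/\alpha$), analogous manipulations reduce the log-form of \eqref{eq:xalphaexp+} to
\[
\int_0^x \biggl[ \frac{1 - s^2}{\alpha - s} + \frac{3 s^2}{\alpha - s^3} \biggr] ds \;\ge\; 0.
\]

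The main (mild) obstacle is pointwise non-negativity of this integrand. Over the common denominator $(\alpha - s)(\alpha - s^3)$, which is positive since $s < x < \alpha$ and $s^3 < x^3 < \alpha$, the numerator equals $(1 - s^2)(\alpha - s^3) + 3 s^2 (\alpha - s) = \alpha(1 + 2s^2) - s^3(4 - s^2)$. I handle three regimes. For $s \le 1$, both summands in the first form are non-negative. For $1 \le s \le 2$, I combine $s^3 \le x^3 < \alpha$ with the elementary inequality $4 - s^2 \le 1 + 2s^2$ (equivalent to $s^2 \ge 1$) to obtain $s^3(4 - s^2) \le \alpha(4 - s^2) \le \alpha(1 + 2s^2)$. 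For $s > 2$, $4 - s^2 < 0$, so $s^3(4 - s^2) \le 0 \le \alpha(1 + 2s^2)$. In every regime the numerator is non-negative, which finishes the proof.
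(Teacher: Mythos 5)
Your proof is correct. Note that the paper itself does not prove Lemma~\ref{lm:xalphaexp} in the main text; the proof is deferred to the supplementary material \cite{BEJ1072-supp}, so no line-by-line comparison is possible from the source provided here. On its own terms, your argument is complete: for \eqref{eq:xalphaexp-} the identity $\frac{\alpha^2}{\alpha-s}-(\alpha+s)=\frac{s^2}{\alpha-s}\ge 0$ on $[0,x]\subset[0,\alpha)$ does the job; for \eqref{eq:xalphaexp+} you correctly dispose of the case $x^3\ge\alpha$ (right-hand side nonpositive), and under $x^3<\alpha$ the logarithms and the integral representation of $-\log(1-x^3/\alpha)$ are legitimate because $s^3\le x^3<\alpha$ on the whole range of integration. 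The reduction of the integrand to the numerator $\alpha(1+2s^2)-s^3(4-s^2)$ over the positive denominator $(\alpha-s)(\alpha-s^3)$ is algebraically right, and your three regimes $s\le 1$, $1\le s\le 2$ (using $s^3<\alpha$ and $4-s^2\le 1+2s^2$), and $s>2$ cover all of $[0,x]$, so the integrand is pointwise nonnegative and the claim follows by the fundamental theorem of calculus. This is an elementary, self-contained verification of exactly the kind one would expect for this lemma.
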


\begin{lemma}
\label{lm:GxalphapMin}
Consider the function
\[
G(x, \alpha, \beta) := \biggl( 1 - \frac{x}\alpha \biggr)^{- \alpha^2}
e^{- \alpha x} \biggl[ \beta+
\int_x^\alpha \biggl( 1 - \frac{y}\alpha
\biggr)^{\alpha^2 - 1} e^{\alpha y} \,\mathrm{d}y \biggr] .
\]
For all $ \alpha\ge1 $ and $ \beta\ge1/\sqrt e $, this function satisfies
\[
\inf_{x < \alpha} G(x, \alpha, \beta) = \inf_{0 \le x \le1}
G(x, \alpha, \beta) .
\]
\end{lemma}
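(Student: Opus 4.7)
The plan is to reduce the problem to analysing the sign of $G'(x)$, and then to verify that $G$ is decreasing on $(-\infty,0]$ and non-decreasing on $[1,\alpha)$. First I compute $G'$ by writing $G=uv$ with $u(x)=(1-x/\alpha)^{-\alpha^2}e^{-\alpha x}$ and $v(x)=\beta+\int_x^{\alpha}(1-y/\alpha)^{\alpha^2-1}e^{\alpha y}\,\mathrm{d}y$. Logarithmic differentiation gives $u'(x)/u(x)=\alpha x/(\alpha-x)$, and clearly $v'(x)=-(1-x/\alpha)^{\alpha^2-1}e^{\alpha x}$. The cancellation $u(x)\cdot(1-x/\alpha)^{\alpha^2-1}e^{\alpha x}=(1-x/\alpha)^{-1}=\alpha/(\alpha-x)$ then yields the clean identity
\[
G'(x)=\frac{\alpha}{\alpha-x}\bigl(x\,G(x,\alpha,\beta)-1\bigr),
\qquad x<\alpha.
\]

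For $x\le 0$: since the integrand defining $v$ is non-negative and $u>0$, we have $G(x,\alpha,\beta)>0$, whence $xG(x,\alpha,\beta)-1\le -1<0$. As $\alpha/(\alpha-x)>0$, this gives $G'(x)<0$ on $(-\infty,0]$, so $G(\cdot,\alpha,\beta)$ is strictly decreasing there. In particular, $\inf_{x\le 0}G(x,\alpha,\beta)=G(0,\alpha,\beta)$, which lies in $[0,1]$.

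For $x\in[1,\alpha)$: the task is to show $G'(x)\ge 0$, i.e.\ $xG(x,\alpha,\beta)\ge 1$. Since $v(x)\ge 0$, we have $G(x,\alpha,\beta)\ge \beta\, u(x)$. Inequality \eqref{eq:xalphaexp-} of Lemma~\ref{lm:xalphaexp} gives $u(x)\ge e^{x^2/2}$ for $0\le x<\alpha$, and the hypothesis $\beta\ge 1/\sqrt e$ then yields
\[
x\,G(x,\alpha,\beta)\;\ge\;x\,\beta\,e^{x^2/2}\;\ge\;x\,e^{(x^2-1)/2}.
\]
At $x=1$ this expression equals $1$, and its derivative $e^{(x^2-1)/2}(1+x^2)$ is positive, so $xe^{(x^2-1)/2}$ is increasing on $[1,\infty)$. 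Hence $xG(x,\alpha,\beta)\ge 1$ throughout $[1,\alpha)$, so $G'(x)\ge 0$ there and $G(\cdot,\alpha,\beta)$ is non-decreasing on $[1,\alpha)$. Consequently $G(x,\alpha,\beta)\ge G(1,\alpha,\beta)$ for every $x\in[1,\alpha)$, and combining with the previous step yields the claimed equality of infima.

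The derivation of the derivative formula is the one place requiring care, but the cancellation is immediate from the structure of $u$ and $v$; everything after that follows directly from Lemma~\ref{lm:xalphaexp} and elementary monotonicity. No genuine obstacle is anticipated.
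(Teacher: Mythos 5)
Your proof is correct. The product-rule computation does give the identity $G'(x)=\frac{\alpha}{\alpha-x}\bigl(x\,G(x,\alpha,\beta)-1\bigr)$ for $x<\alpha$ (the cancellation $u(x)\,(1-x/\alpha)^{\alpha^2-1}e^{\alpha x}=\alpha/(\alpha-x)$ is exact), the sign analysis on $(-\infty,0]$ is immediate since $G>0$ there, and on $[1,\alpha)$ the chain $x\,G\ge x\beta e^{x^2/2}\ge x e^{(x^2-1)/2}\ge 1$ uses only inequality \eqref{eq:xalphaexp-} of Lemma~\ref{lm:xalphaexp} together with $\beta\ge 1/\sqrt{e}$, both available; combining the monotonicity on the two outer ranges with the trivial inequality in the other direction yields the stated equality of infima (for $\alpha=1$ the interval $[1,\alpha)$ is empty and $G\to\infty$ as $x\to\alpha^-$, so nothing is lost there). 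The paper's own proof of Lemma~\ref{lm:GxalphapMin} is relegated to the supplementary material and is not reproduced in the main text, so a line-by-line comparison is not possible, but your route --- reducing to the first-order identity for $G'$ and controlling the sign of $xG-1$ via \eqref{eq:xalphaexp-} --- is the natural argument and is complete as written.
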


The proofs of Lemmas~\ref{lm:xalphaexp} and \ref{lm:GxalphapMin} are
deferred to the
supplementary material \cite{BEJ1072-supp}.

\begin{proof}[Proof of Theorem~\ref{th:PerimBd}]
We basically follow Nazarov's~\cite{Naz} argument, tackling certain
technical matters
differently and expanding some arguments.
First, observe that if a convex set $ C $ has no interior, then it is contained
in the boundary of some half-space $ H $, so that $ \gamma(C) \le
\gamma(H) $. Therefore, in
the supremum in the definition of $ \gamma_d $, it suffices to consider
sets with non-empty interior. Next, if $ 0 \notin C $, we have $ \gamma
(C) \le
\gamma (C - p^\perp_C(0) ) $ (for details, see Section~4 of
Livshyts~\cite{Liv14}). Therefore, it suffices only to consider sets $
C $
with the origin in the closure and with non-empty interior. Moreover,
by continuity,
it suffices to take sets containing the origin in the interior.

Let $ C $ be a convex set with $ 0 \in\operatorname{Int}C $. Take a
random locally Lipschitzian map
$ G \colon\mathbb{R}^d \setminus\{ 0 \} \to\partial C $ with $
J_{d-1} G(x) \le J(x) $ for almost all
$ x \in A $, where $ J \colon\mathbb{R}^d \setminus\{ 0 \} \to(0,
\infty) $ is another random function
(random maps should be measurable as maps from the product of $ \mathbb
{R}^d \setminus\{ 0 \} $ and the
probability space with respect to the product of the Borel $ \sigma
$-algebra and the
$ \sigma$-algebra of the probability space). The random choices of $ G $ and $ J $ will depend on a
parameter $ p \in (0, 1] $ (see below). By Proposition~\ref
{pr:curviFubini}, we have
\begin{align*}
1 &=
\int_{\mathbb{R}^d} \phi_d(x) \,\mathrm{d}x \ge \mathbb{E}_p
\biggl[
\int_{\mathbb{R}^d \setminus\{ 0 \}} \phi_d(x) \frac{J_{d-1} G(x)}{J(x)}
\,\mathrm{d}x \biggr]
\\
&\ge
\int_{\partial C} \mathbb{E}_p \biggl[
\int_{G^{-1}(\{ y \})} \frac
{\phi_d}{J} \,\mathrm{d}\mathscr
H^1 \biggr] \mathscr H^{d-1}(\mathrm{d}y) .
\end{align*}
Thus,
\begin{equation}
\label{eq:PerimBd:recE} \gamma(C) \le\inf_{0 < p \le 1} \frac{1}{\inf_{y \in\partial C}
\xi_C(y, p)} ,
\end{equation}
where
\[
\xi_C(y, p) := \frac{1}{\phi_d(y)} \mathbb{E}_p \biggl[
\int _{G^{-1}(\{ y \})} \frac{\phi_d}{J} \,\mathrm{d}\mathscr
H^1 \biggr] .
\]
Now define $ G $ as follows: for $ x \in
\overline{C} $,
let $ G(x) := p^\rho_C(x) $; for $ x \in\mathbb{R}^d \setminus
\overline{C} $, let
$ G(x) := p^\rho_C(x) $ with probability $ 1 - p $ and $ G(x) :=
p^\perp_C(x) $
with probability $ p $. To define $ J(x) $, recall Lemma~\ref{lm:rad}
along with the fact that $ p^\perp_C $ is non-expansive. Thus, we may
take $ J(x) := 1 $ where
$ G = p^\perp_C $ and $ J(x) := ( \frac{\rho_C(x)}{|x|}
)^{d-1} \frac{1}{\cos\theta(p^\rho_C(x))} $
where $ G = p^\rho_C $; here, $ \theta(y) $ denotes the maximal angle
between $ y $ and
the outer normal of $ C $ at $ y $. Notice that the maximum is attained
because the set
of all unit outer normal vectors is compact, and is strictly less than
$ \pi/2 $
because $ 0 \in\operatorname{Int}C $; typically, the outer normal
vector is unique by Lemma~\ref{lm:rad}.
As a result, we have $ \xi_C(y, p) \ge\xi_{1,C}(y, p) + \xi
_{2,C}(y, p) $, where
\begin{align*}
\xi_{1,C}(y, p) :={}& \frac{\cos\theta(y)}{|y|^{d-1} \phi_d(y)} \biggl[
\int_{(p^\rho_C)^{-1}(\{ y \}) \cap\overline{C}} \llvert x \rrvert ^{d-1} \phi
_d(x) \mathscr H^1(\mathrm{d}x)
\\
& {}+ (1 - p)
\int_{(p^\rho_C)^{-1}(\{ y \}) \setminus\overline{C}} \llvert x \rrvert ^{d-1}
\phi_d(x) \mathscr H^1(\mathrm{d}x) \biggr] ,
\\
\xi_{2,C}(y, p):={}& \frac{p}{\phi_d(y)}
\int_{(p^\perp_C)^{-1}(\{
y \})} \phi_d(x) \mathscr H^1(
\,\mathrm{d}x) .
\end{align*}
Observe that $ \xi_{1,C}(y, p) = \cos\theta(y) \xi_1(|y|, d, p) $, where
\begin{equation}
\label{eq:xi1} %
\begin{aligned}[b] \xi_1(r, d, p):={}&
\frac{ e^{r^2/2}}{r^{d-1}} \biggl[
\int_0^r t^{d-1} e^{- t^2/2}
\,\mathrm{d}t + (1 - p)
\int_r^\infty t^{d-1} e^{- t^2/2}
\,\mathrm{d}t \biggr]
\\
={}&
\frac{e^{r^2/2}}{r^{d-1}} \biggl[
2^{d/2 - 1} (1 - p) \Gamma \biggl( \frac{d}{2} \biggr) + p \int_0^r
t^{d-1} e^{- t^2/2} \,\mathrm{d}t
\biggr] . \end{aligned} %
\end{equation}
As for $ \xi_{2,C}(y, p) $, observe that $ (p^\perp_C)^{-1}(\{ y \})
\supseteq
\{ y + su ;s > 0 \} $, where $ u $ is a unit outer normal vector at $ y $.
Take $ u $ with the maximal angle between $ u $ and $ y $. As a result,
we have
\[
\xi_{2,C}(y, p) \ge \frac{p}{\phi_d(y)}
\int_0^\infty\phi_d(y + tu) \,\mathrm{d}t =
p
\int_0^\infty e^{- t \langle y , u \rangle - t^2/2} \,\mathrm{d}t = p R
\bigl( \llvert y \rrvert \cos\theta(y) \bigr) ,
\]
recalling the Mills ratio defined in \eqref{eq:Mills}.
Combining all estimates after \eqref{eq:PerimBd:recE}, plugging into
the latter and
taking the supremum over all convex sets with the origin in the
interior, we find that
\begin{equation}
\label{eq:gammadBd1} \gamma_d \le\bar\gamma_d := \inf
_{0 < p \le1} \bar\gamma _{d,p} ,
\end{equation}
where
\[
\bar\gamma_{d,p} := \frac{1}{\inf_{r, c > 0} ( c \xi_1(r, d,
p) + p R(cr) )} .
\]
Substituting $ cr = b $ and recalling that the function $ I $ defined
in \eqref{eq:infMills}
is strictly increasing, we find the following alternative expression of
$ \bar\gamma_{d,p} $:
%
\begin{equation}
\label{eq:gammadp} \bar\gamma_{d,p} = \frac{1}{\inf_{r, b > 0} ( \frac{b}{r} \xi_1(r, d, p) + p
R(b) )} =
\frac{1}{p I ( \inf_{r > 0} \frac{1}{p r} \xi_1(r, d, p)
)} .
\end{equation}
For each $ d $, $ \bar\gamma_d $ can be evaluated numerically. Some
values are given in Table~\ref{tb:PerimBd}.
\begin{table}
\caption{Upper bounds on the Gaussian perimeter for some dimensions
(with all values rounded upwards)}
\label{tb:PerimBd}
\begin{tabular*}{\textwidth}{@{\extracolsep{\fill}}cc@{}}
\begin{tabular*}{0.45\textwidth}{@{\extracolsep{\fill}}@{}lll@{}}
\hline
$d$ & $\bar\gamma_d$ & $\bar\gamma_d/d^{1/4}$ \\
\hline
1 & 0.798 & 0.798 \\
2 & 0.864 & 0.726 \\
3 & 0.929 & 0.706 \\
4 & 0.981 & 0.694 \\
5 & 1.025 & 0.685 \\
6 & 1.063 & 0.679 \\
7 & 1.096 & 0.674 \\
8 & 1.126 & 0.670 \\ \hline
\end{tabular*}
&
\begin{tabular*}{0.45\textwidth}{@{\extracolsep{\fill}}@{}lll@{}}
\hline
$d$ & $\bar\gamma_d$ & $\bar\gamma_d/d^{1/4}$ \\
\hline
\hphantom{000}9 & 1.154 & 0.666 \\
\hphantom{00}10 & 1.179 & 0.663 \\
\hphantom{00}20 & 1.364 & 0.645 \\
\hphantom{00}50 & 1.666 & 0.627 \\
\hphantom{0}100 & 1.949 & 0.617 \\
\hphantom{0}200 & 2.288 & 0.609 \\
\hphantom{0}500 & 2.842 & 0.601 \\
1000 & 3.357 & 0.597 \\ \hline
\end{tabular*}
\end{tabular*}
\end{table}

\begin{remark}
For $ d = 1 $, we obtain the actual maximal Gaussian perimeter: we have
$ \gamma_1
= \bar\gamma_1 = \bar\gamma_{1,1} $. First, observe that
$ \inf_{r > 0} \frac{1}{r} \xi_1(r, 1, 1) = \inf_{r > 0} \frac
{e^{r^2/2}}{r} \int_0^r e^{- t^2/2} \,\mathrm{d}t = 1 $.
Differentiating \eqref{eq:Mills}, we find that $ R'(x) = - \int
_0^\infty t e^{- t x - t^2/2} \,\mathrm{d}t $ and
\newline
$ R''(x) = \int_0^\infty t^2 e^{- t x - t^2/2} \,\mathrm{d}t $. Since $
R'(0) = -1 $ and $ R''(x) > 0 $
for all $ x $, we have $ R'(x) \ge-1 $ for all $ x \ge0 $. Therefore,
for $ y = 1 $, the infimum
in \eqref{eq:infMills} is attained at $ x = 0 $, so that $ \bar\gamma
_{1,1} = 1/I(1) = 1/R(0)
= \sqrt{2/\pi} $.
\end{remark}

Now we continue with the estimation. From Stirling's formula with remainder
(e.g., Formula~6.1.38 of Abramowitz and Stegun~\cite{AbS}),
one can easily deduce
that $ \Gamma(x) \ge\sqrt{\frac{2 \pi}{x}} ( \frac{x}{e}
)^x $
for all $ x > 0 $. Plugging into \eqref{eq:xi1}, we obtain
\[
\frac{1}{p r} \xi_1(r, d, p) \ge \frac{e^{r^2/2}}{r^d} \biggl(
\frac{1 - p}{p} \sqrt{\frac{\pi}{d}} \biggl( \frac{d}{e}
\biggr)^{d/2} +
\int_0^r t^{d-1} e^{- t^2/2}
\,\mathrm{d}t \biggr) .
\]
Substituting $ \alpha:= \sqrt{d/2} $, $ x = \alpha- r^2/(2 \alpha)
$, $ y = \alpha- t^2/(2 \alpha) $,
we obtain after some calculation
\[
\inf_{r > 0} \frac{1}{p r} \xi_1(r, d, p) \ge
\frac{1}{2 \alpha} \inf_{x < \alpha} \biggl( 1 - \frac{x}{\alpha}
\biggr)^{- \alpha^2} e^{- \alpha x} \biggl[ \frac{1 - p}{p} \sqrt{2 \pi}
+
\int_x^\alpha \biggl( 1 - \frac{y}{\alpha}
\biggr)^{\alpha^2 - 1} e^{\alpha y} \,\mathrm{d}y \biggr] .
\]
Now suppose that $ \alpha\ge1 $ and $ \frac{1 - p}{p} \sqrt{2 \pi}
\ge\frac{1}{\sqrt{e}} $;
this is ensured if $ d \ge2 $ and $ p < 0.8 $. In this case, we can
apply Lemma~\ref{lm:GxalphapMin}
to reduce the infimum over $ x < \alpha$ to the infimum over $ [0, 1] $.
By Lemma~\ref{lm:xalphaexp}, we can further estimate
\[
\inf_{r > 0} \frac{1}{p r} \xi_1(r, d, p) \ge
\frac{1}{2 \alpha} \inf_{0 \le x \le1} e^{x^2/2} \biggl[
\frac{1 - p}{p} \sqrt{2 \pi} +
\int_x^\alpha e^{- y^2/2} \biggl( 1 -
\frac{y^3}{\alpha} \biggr) \,\mathrm{d}y \biggr] .
\]
Since $ \alpha\ge1 $, $ y \ge\alpha$ implies $ y^3 \ge\alpha$, so
that the upper
limit $ \alpha$ can be replaced with the infinity:
\begin{align*}
\inf_{r > 0} \frac{1}{p r} \xi_1(r, d, p) &
\ge \frac{1}{2 \alpha} \inf_{0 \le x \le1} e^{x^2/2} \biggl[
\frac{1 - p}{p} \sqrt{2 \pi} +
\int_x^\infty e^{- y^2/2} \biggl( 1 -
\frac{y^3}{\alpha} \biggr) \,\mathrm{d}y \biggr]
\\
&= \frac{1}{2 \alpha} \inf_{0 \le x \le1} \biggl[ \frac{1 - p}{p}
\sqrt{2 \pi} e^{x^2/2} + R(x) - \frac{x^2 +
2}{\alpha} \biggr]
\\
&\ge \frac{1}{2 \alpha} K(p) - \frac{3}{2 \alpha^2}
\\
&= \frac{1}{\sqrt{2d}} K(p) - \frac{3}{d} ,
\end{align*}
where
$
K(p) := \inf_{0 \le x \le1} [
\frac{1 - p}{p} \sqrt{2 \pi} e^{x^2/2} + R(x)
]
$.
Plugging into \eqref{eq:gammadp} and applying
\newline
Lemma~\ref{lm:infMillsBd}, we find that
\[
\bar\gamma_{d,p} \le \frac{1}{2 p \sqrt{ (
\frac{1}{\sqrt{2d}} K(p) - \frac{3}{d} ) ( 1 - \frac
{1}{\sqrt{2d}} K(p) + \frac{3}{d} )
}} ,
\]
provided that $ d \ge2 $, $ p \le0.8 $ and
$ d > \max \{ \frac{(K(p))^2}{2}, \frac{18}{(K(p))^2} \} $.
As $ d \to\infty$, the preceding upper bound asymptotically equals
$
\frac{d^{1/4}}{2^{3/4} p \sqrt{K(p)}}
$.
Now choose $ p $ so that this asymptotic bound is optimal, that is, so
that $ p^2 K(p) $ is
maximal. Numerical calculation shows that this occurs approximately at
$ p = p^* := 0.72 $
(which is less than $ 0.8 $). Moreover, one can numerically check that
$ K(p^*) > K^* := 1.98 $.
This indicates that the coefficient at $ d^{1/4} $ in the bound on $
\gamma_d $ can be set to
$
\frac{1}{2^{3/4} p^* \sqrt{K^*}} < 0.59
$.

Choosing $ p = p^* $, we re-estimate $ \bar\gamma_d $, using
Lemma~\ref{lm:infMillsBd}
once again:
\begin{align*}
\bar\gamma_d \le\bar\gamma_{d,p^*} &\le \frac{1}{p^* I ( \frac{K^*}{\sqrt{2 d}} - \frac{3}{d}
)}
\\
&\le \frac{1}{2 p^* \sqrt{
( \frac{K^*}{\sqrt{2d}} - \frac{3}{d} ) ( 1 -
\frac{K^*}{\sqrt{2d}} + \frac{3}{d} )
}} =
\\
&\le \frac{0.59 d^{1/4}}{\sqrt{1 - (
\frac{3 \sqrt{2}}{K^*} + \frac{K^*}{\sqrt{2}}
) d^{-1/2}}}
\\
&\le \frac{0.59 d^{1/4}}{\sqrt{1 - 3.55 d^{-1/2}}} ,
\end{align*}
provided that $ d \ge3.55^2 $, that is, $ d \ge13 $. Taking $ d = 932
$, observe that
\newline
$ ( 1 - 3.55 d^{-1/2} )^{-1/2} \le1 + ( \frac
{1}{0.59} \sqrt{\frac{2}{\pi}} - 1 )
d^{-1/4} $; this inequality also holds in the limit as $ d \to\infty
$. Since the function
$ x \mapsto ( 1 - 3.55 x^2 )^{-1/2} $ is convex, the latter
inequality must hold for all
$ d \ge932 $. This completes the proof for the latter case. For $ d <
932 $, the desired result can be
verified numerically, evaluating \eqref{eq:gammadBd1} directly.
\end{proof}

\section*{Acknowledgements}

The author is grateful to Mihael Perman for a fruitful discussion
that led to the appearance of this paper, and for useful comments.%




\begin{supplement}
\stitle{Proofs of certain technical issues}
\slink[doi]{10.3150/18-BEJ1072SUPP}
\sdatatype{.pdf}
\sfilename{bej1072supp.pdf}
\sdescription{The supplementary file contains a proof of continuous
differentiability of $ f_A^{\varepsilon} $
in Lemma~\ref{lm:Smoothdelta}, and proofs of Lemmas~\ref
{lm:xalphaexp} and \ref{lm:GxalphapMin}.}
\end{supplement}

%

\end{document}